\def\shuffle{{\sqcup\mathchoice{\mkern-3mu}{\mkern-3mu}{\mkern-3.2mu}{\mkern-3.8mu}\sqcup}} %% The various lengths correspond to the four main math modes: display, text, script and scriptscript.
\def\stuffle{{\sqcup\mathchoice{\mkern-12.5mu}{\mkern-12.5mu}{\mkern-8.7mu}{\mkern-8.5mu} - \mathchoice{\mkern-12.5mu}{\mkern-12.5mu}{\mkern-8.5mu}{\mkern-8.7mu}\sqcup}}
\tikzstyle{every picture}=[level distance = 8mm, baseline=-0.5ex]
\tikzstyle{prop}=[shape=circle,minimum size=6mm, draw=black!80, fill=green!30]
\newcommand{\C}{\mathbb{C}}
\newcommand{\R}{\mathbb{R}}
\newcommand{\Q}{\mathbb{Q}}
\newcommand{\Z}{\mathbb{Z}}
\newcommand{\N}{\mathbb{N}}
\newtheorem{thm}{Theorem}[section]
\newtheorem*{thm*}{Theorem}
\newtheorem{lem}[thm]{Lemma}
\newtheorem{coro}[thm]{Corollary}
\newtheorem{prop}[thm]{Proposition}
\newtheorem{defnprop}[thm]{Definition-Proposition}
\theoremstyle{definition}
\newtheorem{defn}[thm]{Definition}
\newtheorem{rk}[thm]{Remark}
\newtheorem{ex}[thm]{Example}
\newtheorem{coex}[thm]{Counterexample}
\providecommand{\customgenericname}{}
\newcommand{\newcustomtheorem}[2]{%
  \newenvironment{#1}[1]
  {%
   \renewcommand\customgenericname{#2}%
   \renewcommand\theinnercustomgeneric{##1}%
   \innercustomgeneric
  }
  {\endinnercustomgeneric}
}
\newcommand {\fraks}{{\mathfrak {s}}}
\newcommand {\frakt}{{\mathfrak {t}}}
\newcommand {\frakS}{{\mathfrak {S}}}
\newcommand {\calf}{{\mathcal{F}}}
\newcommand {\cali}{\mathcal{I}}
\newcommand {\calp}{{\mathcal {P}}}
\newcommand {\calr}{{\mathcal {R}}}
\newcommand {\cals}{{\mathcal {S}}}
\newcommand {\calw}{{\mathcal {W}}}
\newcommand {\calz}{{\mathcal {Z}}}
\newcommand{\wP}{{\widehat P}}
\newcommand{\wPw}{{\widehat P}_\calw}
\newcommand{\Ker}{\text{Ker}}
\newcommand{\tdun}[1]{\begin{picture}(10,5)(-2,-1)
\put(0,0){\circle*{2}}
\put(2,-2){\tiny #1}
\end{picture}}
\newcommand{\tddeux}[2]{\begin{picture}(12,5)(0,-1)
\put(3,0){\circle*{2}}
\put(3,0){\line(0,1){5}}
\put(3,5){\circle*{2}}
\put(4,-2){\tiny #1}
\put(4,4){\tiny #2}
\end{picture}}
\newcommand{\tdtroisun}[3]{\begin{picture}(20,12)(-5,-1)
\put(3,0){\circle*{2}}
\put(-0.65,0){$\vee$}
\put(6,7){\circle*{2}}
\put(0,7){\circle*{2}}
\put(5,-2){\tiny #1}
\put(6,5){\tiny #2}
\put(-5,8){\tiny #3}
\end{picture}}
\newcommand{\tdquatredeux}[4]{\begin{picture}(20,20)(-5,-1)
\put(3,0){\circle*{2}}
\put(-.65,0){$\vee$}
\put(6,7){\circle*{2}}
\put(0,7){\circle*{2}}
\put(0,14){\circle*{2}}
\put(0,7){\line(0,1){7}}
\put(5,-2){\tiny #1}
\put(9,5){\tiny #2}
\put(-5,5){\tiny #3}
\put(-5,12){\tiny #4}
\end{picture}}
\newcommand{\tdquatretrois}[4]{\begin{picture}(20,20)(-5,-1)
\put(3,0){\circle*{2}}
\put(-.65,0){$\vee$}
\put(6,7){\circle*{2}}
\put(0,7){\circle*{2}}
\put(6,14){\circle*{2}}
\put(6,7){\line(0,1){7}}
\put(5,-2){\tiny #1}
\put(9,5){\tiny #2}
\put(-5,5){\tiny #4}
\put(9,12){\tiny #3}
\end{picture}}
\newcommand{\tdquatrequatre}[4]{\begin{picture}(20,14)(-5,-1)
\put(3,5){\circle*{2}}
\put(-.65,5){$\vee$}
\put(6,12){\circle*{2}}
\put(0,12){\circle*{2}}
\put(3,0){\circle*{2}}
\put(3,0){\line(0,1){5}}
\put(6,-3){\tiny #1}
\put(6,4){\tiny #2}
\put(9,12){\tiny #3}
\put(-5,12){\tiny #4}
\end{picture}}
\newcommand{\tcinqonze}[5]{\begin{picture}(15,26)(-5,-1)
\put(3,5){\circle*{2}} %vertex 2
\put(-0.65,5){$\vee$}
\put(6,12){\circle*{2}} %vertex 5
\put(0,12){\circle*{2}}
\put(3,0){\circle*{2}} %vertex 1
\put(3,0){\line(0,1){5}}
\put(0,12){\line(0,1){7}} %vertex 3
\put(0,19){\circle*{2}} %vertex 4

\put(5,0){\tiny #1}
\put(5,5){\tiny #2}
\put(-5,12){\tiny #3}
\put(-2,21){\tiny #4}
\put(7,11){\tiny #5}
\end{picture}}
\begin{document}

\title{Double Shuffle relations for Arborified Zeta Values.}
\author{Pierre~J.~Clavier${}^{1}$\\
\normalsize \it $^1$  Institute of Mathematics, \\
\normalsize \it University of Potsdam,\\
\normalsize \it D-14476 Potsdam, Germany\\
~\\
\normalsize email: clavier@math.uni-potsdam.de}

\date{}

\maketitle

\begin{abstract} 
 Arborified zeta values are defined as iterated series and integrals using the universal properties of rooted trees. This approach allows to study their convergence domain and to relate them to  {multiple zeta} values. Generalisations to 
 rooted trees of the stuffle and shuffle products are defined and studied. It is further shown that 
 arborifed zeta values are algebra morphisms for these new products on trees.
\end{abstract}
{\bf Keywords:} Rooted trees, multiple zeta values, shuffle products, Rota-Baxter algebras. \\
{\bf Math. subject classification:} 13M32, 05C05.

\tableofcontents

\section{Introduction}

\subsection{Aims of the paper} 

Arborified zeta values is a generalisation of {multiple zeta} values that have not yet been fully explored. This paper aims at closing this gap. More precisely, 
the aims of this paper are threefold:
\begin{itemize}
 \item To rigorously define arborified zeta values and prove their domain of convergence.
 \item To relate arborified zeta values to  {multiple zeta} values.
 \item To study  relations obeyed by the arborified zeta values.
\end{itemize}
The approach adopted here is inspired by \cite{CGPZ1} and \cite{MP} where the focus was on renormalised values yet the algebraic and analytic tools used in those papers can also be implemented in the convergent case 
studied here. The new input of the present work is
\begin{enumerate}
 \item The application of the analytic tools of \cite{MP} and \cite{Pa} to arborified zeta values to characterise their convergence domain.
 \item The application of the algebraic tools of \cite{CGPZ1} and \cite{CGPZ2} to study the relations obeyed by arborified zeta values.
 \item The construction of new (to the author's knowledge) algebraic structures on trees relevant to arborified zeta values.
 \item A relation between the two versions of arborified zeta values present in \cite{M}.
\end{enumerate}
The first two results above are obtained in Sections \ref{section:stuffle} (for arborified zeta values defined as iterated series) and \ref{section:shuffle} (for arborified zeta values defined as iterated integrals).
The third result is achieved in Section  \ref{section:shuffle_tree}, where the shuffles of trees are defined and their relations to arborified 
zeta values explored. The last achievement is presented in the Appendix \ref{section:further} where Hoffman's relations are 
studied in the context of arborified zeta values.

\subsection{State of the art}

 {Multiple zeta} values (MZVs)\footnote{also called ``multizeta numbers''  by \'Ecalle, ``multiple harmonic sums'' by Hoffman, ``Euler-Zagier numbers'' by the Borwein brothers and ``polyzeta 
numbers'' (in order to respect Weil's principles) by Cartier. We will follow  {the referee's suggestion and} Zagier's denomination of  {multiple zeta} values as it seems to be the most widespread nowadays.} have by now a  substantial history as they can be traced back 
to Euler \cite{Eu}. Over the following two centuries, they were nearly forgotten although 
they would  appear here and there in various independent works. In the 80s, MZVs have arisen in \'Ecalle's \cite{Ec} theory of mould 
calculus. A systematic study of MZVs was later initiated by Hoffman \cite{Ho1} and Zagier \cite{Za}.

MZVs are nowadays a well-established subject, with many known results concerning their algebraic and number-theoretic properties, and ambitious conjectures. For a more detailed account of the historical background and recent 
developments, we refer the reader to one of the many available very good introductions to MZVs, for example \cite{Wa}. Yet let us 
list known results on MZVs which we will generalise to trees.

Let $\Omega$ be a set. In the following, we write $\calw_\Omega$ for the algebra freely generated over $\R$ by words written in the alphabet $\Omega$. This algebra can also be 
seen as the algebra of noncommutative polynomials with variables in $\Omega$ over $\R$. {\bf Stuffle MZVs} can be seen as a map 
\begin{align}
 \zeta_\stuffle :~ & \calw_{\N^*}^{\rm conv}\subseteq\calw_{\N^*} \longrightarrow \R \nonumber \\
		  & (s_1\cdots s_k) \mapsto \sum_{n_1>\cdots n_k>0}^\infty\prod_{i=1}^k n_i^{-s_i} \label{eq:series_stuffle_MZV}
\end{align}
where $\calw_{\N^*}^{\rm conv}$ is a subset of $\calw_{\N^*}$ on which the series in \eqref{eq:series_stuffle_MZV} are well-defined. {\bf Shuffle MZVs} on the other hand, are given by a map 
\begin{align}
 \zeta_\shuffle :~ & \calw_{\{x,y\}}^{\rm conv}\subseteq\calw_{\{x,y\}} \longrightarrow \R \nonumber \\
		  & (\epsilon_1\cdots \epsilon_k) \mapsto \int_{1\geq t_1\geq \cdots \geq t_k\geq0}\prod_{i=1}^k \omega_{\epsilon_i}(t) \label{eq:integral_shuffle_MZV}
\end{align}
with $\omega_x(t)=dt/t$, $\omega_y(t)=dt/(1-t)$ and where $\calw_{\{x,y\}}^{\rm conv}$ is a subset of $\calw_{\{x,y\}}$ ensuring convergence of 
the integral in \eqref{eq:integral_shuffle_MZV}. More precise definitions of 
stuffle and shuffle MZVs will be given in Sections \ref{section:stuffle} and \ref{section:shuffle} respectively. The terminology comes from the simple, yet crucial, observation that 
stuffle (resp. shuffle) MZVs are algebra morphisms for the stuffle\footnote{also called ``quasi-shuffle'' and ``sticky shuffle'', shortened in ``stuffle''.} (resp. shuffle) product:
\begin{equation} \label{eq:stuffle_shuffle_prod}
 \zeta_\stuffle(w\stuffle w') = \zeta_\stuffle(w)\zeta_\stuffle(w'), \qquad \zeta_\shuffle(w\shuffle w') = \zeta_\shuffle(w)\zeta_\shuffle(w').
\end{equation}
The stuffle $\stuffle$ and shuffle $\shuffle$ products are rigorously defined in Definition \ref{defn:shuffle_prod}. 

Stuffle and shuffle MZVs are linked through what we call the {\bf binarisation map}
\begin{align} \label{eq:binarisation_map}
 \fraks :\calw_{\N^*}&\longrightarrow\calw_{\{x,y\}} \\
 (n_1\cdots n_k) & \longrightarrow (\underbrace{x\cdots x}_{n_1-1}y\cdots\underbrace{x\cdots x}_{n_k-1}y). \nonumber
\end{align}
This result is based on an observation of Kontsevitch, as reported in \cite[section 9]{Za}.
Indeed, a fundamental result of the theory of MZVs is that, for any convergent word $w\in\calw_{\N^*}$, one has
\begin{equation} \label{eq:shuffle_stuffle_words}
 \zeta_\shuffle(\fraks(w)) = \zeta_\stuffle(w).
\end{equation}
This, together with the fact that $\fraks\left(\calw_{\N^*}^{\rm conv}\right) = \calw_{\{x,y\}}^{\rm conv}$ implies that 
${\rm Im}(\zeta_\stuffle) = {\rm Im}(\zeta_\shuffle)$
and justifies the name `` {multiple zeta} \emph{values}''; i.e. that we identify these maps and the elements of their image.

The third set of relations that MZVs obey are {\bf Hoffman's regularisation relations} \cite{Ho1,Ho2}: for any convergent word $w$, 
$\fraks\left((1)\stuffle w)\right) - (y)\shuffle\fraks(w)$ is a convergent word and
\begin{equation} \label{eq:Hoffman_reg_rel}
 \fraks\left((1)\stuffle w)\right) - (y)\shuffle\fraks(w) \in \Ker(\zeta_\shuffle).
\end{equation}
The shuffle, stuffle and Hoffman's regularisation relations are merged under the common denomination {\bf regularised double shuffle relations}. 
While other relations among MZVs are known (e.g. the 
duality relation) it is conjectured that regularised double shuffle relations generate every algebraic relations between MZVs.

MZVs have many generalisations, conical zeta values \cite{GPZ}, Hurwitz MZVs \cite{Bo}, elliptic MZVs \cite{En}, Witten's MZVs \cite{Wi} 
among others. 
{\bf Arborified zeta values} (AZVs)\footnote{also called ''branched zeta values`` in \cite{CGPZ1}}  are one generalisation to trees of MZVs. 
While these objects are also present in \'Ecalle's work \cite{Ec}, they were not, by far, as extensively studied as MZVs. A systematic study of 
arborified MZVs started only of 2016 with the work of D.~Manchon 
\cite{M}. They did, however, appear two years earlier in the work of Yamamoto \cite{Y}.

In \cite{M}, the stuffle and shuffle\footnote{respectively called ''contracting arborification`` and ''simple arborification`` in \cite{M}, 
 thus following  in that the names given by 
\'Ecalle.} versions of AZVs are presented and some 
of their properties are shown. Furthermore the question of lifting the map $\fraks$ to trees is raised in that same paper.

Finally, let us mentioned that (divergent) AZVs were defined and studied in \cite{CGPZ1} as a testing ground for multivariate renormalisation 
techniques.

\subsection{Content of the paper}

Section \ref{section:generalities} introduces the concepts used in the rest of the paper. In particular, the branching procedures of 
\cite{CGPZ1} are recalled in Definitions \ref{defn:phi_hat} and \ref{defn:phi_sharp}. 
We give in Section \ref{section:generalities} a purely combinatorial proof of Theorem \ref{thm:flattening} which links Rota-Baxter 
operators and the branching procedure of Definition \ref{defn:phi_hat}. The original  
proof of this result  can be found in \cite[Theorem 2.13]{CGPZ1}.

A procedure was presented in \cite{CGPZ1} 
which takes advantage of the universal property of rooted forest (Theorem \ref{thm:univ_prop_tree}) to lift a map $\phi:A\longrightarrow A$ to a 
morphism of operated algebras $\widehat\phi:\calf_A\longrightarrow A$. $\widehat\phi$ is called the {\bf branching} of $\phi$. 
In Section \ref{section:stuffle} we apply this procedure to build arborified zeta values as iterated series.

Following \cite{MP,CGPZ1}, we use branching procedures on the Euler-MacLaurin operator on classical symbols. These 
techniques allow us to keep track of the order of symbols, see Proposition \ref{prop:order_calz}. Stuffle arborified zeta values are then defined 
(and a convergence criterion given) in 
Definition-Proposition \ref{defnprop:arborified_zeta}. Stuffle AZVs are then shown to be algebra morphisms for the concatenation product of 
trees in Proposition \ref{prop:stuffle_alg_mor}.

Later in Section \ref{section:stuffle}, the same techniques are further applied to words to built MZVs. The order of symbols obtained after the iteration of 
the Euler-MacLaurin operator is given in 
Theorem \ref{thm:calz_words_order}, and the definition of MZVs (also with a convergence criterion) in Definition-Proposition 
\ref{defnprop:zeta_stuffle}. This allows to 
prove the following first main result of this paper  (Theorem \ref{thm:main_result_stuffle}):
\begin{customthm}{1}
 For any convergent forest $F$, the convergent  arborified zeta value $\zeta^T_\stuffle(F)$ (resp. $\zeta^{T,\star}_\stuffle(F)$) is a finite 
 linear combination of convergent  {multiple zeta} values $\zeta_\stuffle(w)$ (resp. $\zeta^{\star}_\stuffle(w)$) with rational coefficients. More precisely, it can be written as a finite 
 linear combination of  {multiple zeta} values with integer coefficients.
\end{customthm}
The two statements of this result, which differ only by the words rational/integer is a consequence from the fact that one can choose with which MZVs
we write the $\zeta^T_\stuffle(F)$. We claim that there is one particular choice for which the rational coefficients are all integers. 
Another choice would lead to a linear combination of 
MZVs with rational coefficients rather than integer (under the standard conjecture that the shuffle, stuffle and Hoffman's regularisation relations are the only 
rational relations between MZVs). This observation will also hold for the next results relating arborified zeta and arborified polylogarithms to their non-arborified counterparts.

The techniques of Section \ref{section:generalities} are used in Section \ref{section:shuffle} to build arborified zeta values as iterated 
integrals in a different fashion from that of  Section \ref{section:stuffle}. 
Indeed, in Section \ref{section:stuffle}, branching procedures were used in a given ambient space (the algebra of classical symbols). In 
Section \ref{section:stuffle}, we relate the arborified versions of iterated
integrals to their unbranched counterparts at each steps. This is done for practical purpose, yet it illustrates the flexibility of branching 
procedures.

Since shuffle MZVs are defined by Chen's iterated integral, we
recall the definition of Chen's iterated integrals in Definition-Proposition \ref{defnprop:Chen_int}. This allows us 
to define arborified Chen integrals in Definition \ref{defn:arborified_chen_int}. The relation between these two objects is given in 
Proposition \ref{prop:chen_int_arbo_words}. In a special case, arborified Chen integrals 
are arborified polylogarithms, whose definition (together with a convergence criterion) is given in Definition-Proposition 
\ref{defnprop:arbo_polylogs}. Similar techniques are used to defined the usual 
multiple polylogarithms in Definition \ref{defn:multiple_polylogs}. A second important result of this paper is then Theorem 
\ref{thm:arborified_polylogs} which states
\begin{customthm}{2}
 For any semiconvergent forest $F$, the arborified polylogarithm associated to $F$ enjoys the following properties
 \begin{enumerate}
  \item it is a $\Q$-linear sum of multiple polylogarithms;
  \item it is a smooth map on $[0,1[$;
  \item The arborified polylogarithm map $Li^T:F\mapsto Li^T_F$ is an algebra morphism for the concatenation of trees and the 
 pointwise product of functions.
 \end{enumerate}
\end{customthm}
Shuffle AZVs (resp. MZVs) are then introduced in Definition \ref{defn:shuffle_AZVs} (resp \ref{defn:shuffle_MZVs}). These two objects are 
related in Theorem \ref{thm:main_result_shuffle}, which reads
\begin{customthm}{3} 
 For any convergent forest $F\in\calf_{\{x,y\}}^{\rm conv}$, $\zeta_\shuffle^T(F)$ is a $\Q$-linear combination of  {multiple zeta} values that can be written as a finite 
 linear combination of  {multiple zeta} values with integer coefficients. 
 Furthermore the map $\zeta_\shuffle^T:\calf_{\{x,y\}}^{\rm conv} \longrightarrow \R$
 is an algebra morphism for the concatenation product of trees.
\end{customthm}
Let us emphasize that the techniques used to build arborified objects also allow to prove results about the usual, unbranched objects; for example MZVs and polylogarithms. Beside the 
convergence criteria already mentioned, we have been able 
to provide new (to the author's knowledge) proofs that stuffle zeta are stuffle morphism (see Proposition \ref{prop:zeta_stuffle_mor}) 
and new proofs that multiple polylogarithms
and shuffle MZVs are algebra morphisms for the shuffle product in Propositions \ref{prop:polylogs_shuffle_mor} and 
\ref{prop:shuffle_zeta_alg_mor_shuffle} respectively.

In Section \ref{section:shuffle_tree} we define (Definition \ref{defn:shuffle_tree}) $\lambda$-shuffle products on trees. We show in Proposition
\ref{prop:shuffles_trees} that these products equip rooted forests with nonassociative algebra structures. Theorem 
\ref{thm:branching_shuffle_tree_morphism} states that branchings of Rota-Baxter operators are algebra morphisms for these $\lambda$-shuffle products 
on trees. This result is applied in Theorem \ref{thm:AZV_alg_morphism_shuffle} to show that the various versions of the AZVs are 
algebra morphisms for specific $\lambda$-shuffle products on trees:
\begin{customthm}{4} 
 The map $\zeta^T_\stuffle:\calf_{\N^*}^{\rm conv}\longrightarrow\R$ (resp. $\zeta^{T,\star}_\stuffle:\calf_{\N^*}^{\rm conv}\longrightarrow\R$, resp. $\zeta^T_\shuffle:\calf_{\{x,y\}}^{\rm conv}\longrightarrow\R$) is an 
 algebra morphism for the stuffle (resp. anti-stuffle, resp. shuffle) product on trees.
\end{customthm}
This induces relations amongst AZVs that have no direct equivalent for MZVs (Corollary \ref{coro:relation_nonasso}).

We finish this article with an Appendix, whose purpose is the arborification of other properties of MZVs, namely the binarisation map 
\eqref{eq:binarisation_map} and Hoffman's regularisation relations \eqref{eq:Hoffman_reg_rel}. 
The branched version of the binarisation map is given in Definition \ref{defn:branched_bin_map}. The main result of this Appendix is 
Theorem \ref{thm:relation_shuffle_stuffle}:
\begin{customthm}{5}
 For any convergent forest $F\in\calf_{\N^*}^{\rm conv}$ we have
 \begin{equation*}
  \zeta^T_{\shuffle}(\fraks^T(F)) \leq \zeta^T_{\stuffle}(F).
 \end{equation*}
 Furthermore, the inequality is an equality if and only if $F$ has no branching vertex (i.e. $F$ is the empty tree or $F=l_1\cdots l_k$ with 
 $l_i$ being ladder trees).
\end{customthm}
Finally, Propositions \ref{prop:branched_Hoffman} and \ref{prop:conv_Hoffman_trees} give two arborified versions of Hoffman's 
regularisation relations.

While completing the tasks set for this paper, new structures have been unraveled (in particular in Section \ref{section:shuffle_tree}). It leads to further natural questions, such as 
their links to known structures on trees, in particular Connes-Kreimer's coproduct. Moreover it is pointed out in the Appendix that Hoffman's regularisation relations do not naturally lift 
to trees. We feel that the quantities arising in this discrepancy are worth studying. Another possible approach could be to study another generalisation $\zeta^t$ of MZVs to trees, for which the relation 
$\zeta^t_{\shuffle}(\fraks^T(F)) = \zeta^t_{\stuffle}(F)$ holds.

\section{Trees, words and universal properties} \label{section:generalities}

Unless otherwise specified, the word algebra will stand in this paper for a unital, associative algebra over $\R$. We also set 
$\N:=\Z_{\geq0}$ and $\N^*:=\Z_{\geq1}$.

\subsection{Generalities on trees and words}

We recall here some well-known definitions and results on trees, see for example \cite{Fo}.
\begin{defn}
 A {\bf tree} $T$ is a finite connected loopless graph: $T=(V(T);E(T))$. We use the short-hand notation $\emptyset$ for the {\bf empty graph} 
 $(\emptyset,\emptyset)$. A 
 {\bf rooted tree} is a tree together with a partial order $\geq$ on the set of vertices such that this partial order has a minimum element, 
 called 
 {\bf the root}. A {\bf rooted forest} is a finite disjoint union of rooted trees. The partial orders on the vertices of each tree induce a 
 partial order on the vertices of the forest. We write $\calf$ the commutative algebra freely generated by rooted forests.

 A vertex that is maximal for the partial order on the vertices is called a {\bf leaf}. If for two vertices $v$ and $v'$ are such that 
 $v'\geq v$, 
 $v'\neq v$ and for any vertex $v''$, $v'\geq v''\geq v$ implies $v'=v''$ or $v=v''$, then $v'$ is called a {\bf direct successor} to $v$. A 
 vertex that has strictly more than one direct successor is called a {\bf branching vertex}. A tree with non branching vertex is a 
 {\bf ladder tree}.

 Let $\Omega$ be a set. A {\bf $\Omega$-decorated rooted forest} is a rooted forest $F$ together with a {\bf decoration map} 
 $d:V(F)\mapsto\Omega$. When there is no need to specify the decoration map we simply write $F$ for a decorated forest $(F,d)$. Let 
 $\calf_\Omega$ be the commutative algebra freely generated by $\Omega$-decorated rooted  {trees}.
\end{defn}
We now fix our notation for words.
\begin{defn}
 For a set $\Omega$, we write $\calw_\Omega$ the linear span (over $\R$) of {\bf words} written in the alphabet $\Omega$. $\calw_\Omega$ is 
 therefore the algebra over $\R$ of non-commutative polynomials with variables in $\Omega$. We also write $\emptyset$ for the empty word. 
 Furthermore we write 
 $\iota_\Omega:\calw_\Omega\hookrightarrow\calf_\Omega$ the canonical injection which sends the empty word to the empty tree and non empty 
 words to ladder trees:
 \begin{equation*}
  \iota_\Omega(\omega_1\cdots\omega_k):=B_+^{\omega_1}\circ\cdots\circ B_+^{\omega_k}(\emptyset)
 \end{equation*}
 with $B_+$ the branching operator on forests, to be defined below.
\end{defn}
Finally, let us define some gradings on trees and words.
\begin{defn}
 \begin{itemize}
  \item Let $w$ be a word written in the alphabet $\Omega$. We define its {\bf length} $|w|$ to be $0$ if $w$ is the empty tree and 
  $|\omega_1\cdots\omega_k|:=k$ otherwise.
  \item Similarly, let $F=(V(F),E(F))$ be a rooted forest. We set $|F|:=|V(F)|$.
  \item For any $\omega\in\Omega$ and $F$ a forest decorated by $\Omega$, let $\sharp_\omega F$ the number of vertices of $F$ decorated by 
  $\omega$:
  \begin{equation*}
   \sharp_\omega F:= |\{v\in V(F):d(v)=\omega\}|.
  \end{equation*}
  \item Similarly, we write $\sharp_\omega w$ the number of times that the letter $\omega$ appears in a word $w$.
  \item Let $(\Omega,\bullet)$ be a commutative semigroup. We define the {\bf weight with respect to the product $\bullet$} $||w||_\bullet$ of a word $w\in\calw_\Omega$ to be $0$ is $w=\emptyset$ and 
  $||\omega_1\cdots\omega_k||_\bullet:=\omega_1\bullet\cdots\bullet\omega_k$. If the product on $\Omega$ is clear from context, we will speak of the weight of $w$ and write $||w||$.
  \item Similarly, let $(\Omega,\bullet)$ be a commutative semigroup and $(F,d)=((V(F),E(F)),d)\in\calf_\Omega$ be a rooted forest. We set 
  $||F||_\bullet:=\sum_{v\in V(F)}^\bullet d(v)$; where the sum is for the product $\bullet$.
 \end{itemize}

\end{defn}

\subsection{Branching procedures}

We start by recalling the definition of operated structures \cite{G1}, as written in \cite{CGPZ2}. 
\begin{defn}
 Let $\Omega$ be a set. An {\bf $\Omega$-operated set} (resp. {\bf semigroup, monoid, vector space, algebra}) is a set 
 (resp. semigroup, monoid, vector space, algebra) $U$ together with a map $\beta:\Omega\times U\mapsto U$.
\end{defn}
Let $\Omega$ be a set and $(U,\beta)$ be an $\Omega$-operated set (resp. semigroup, monoid, vector space, algebra). For any $\omega$ in 
$\Omega$ we write $\beta^\omega: U\mapsto U$ the map defined by $\beta^\omega(u):=\beta(\omega,u)$.  Notice that we do not require these maps 
$\beta^\omega$ to fulfill any compatibility conditions with the semigroup (resp. monoid, vector space, algebra) structure of the operated set $U$.
\begin{ex}
 Let $\Omega$ be a set and $B_+:\Omega\times\calf_\Omega\mapsto\calf_\Omega$ be the operation defined through the grafting operator which, to 
 a doublet $(\omega,F=T_1\cdots T_k)$ associates the decorated tree obtained from $F$ by adding a root decorated by $\omega$ linked to each 
 root of $T_i$ for $i$ going from $1$ to $k$ as shown for some small trees\footnote{ {the code to generate these trees was written by Lo\"ic Foissy.}} below:
 \begin{equation*}
  B_+^\omega(\emptyset) = \tdun{$\omega$} \qquad B_+^\omega(\tdun{$\omega'$}) = \tddeux{$\omega$}{$\omega'$} \qquad 
  B_+^\omega(\tdun{$\omega'$}\tdun{$\omega''$}~) = \tdtroisun{$\omega$}{$\omega''$}{$\omega'$}.
 \end{equation*}
 Then $(\calf_\Omega,B_+)$ is an $\Omega$-operated (commutative) algebra.
\end{ex}
This example enjoys a universal property as will be recalled below. This universal property was originally shown in \cite{KP}, and formulated 
in the present form in \cite{G1} and an alternative proof of this result can be found in \cite{CGPZ2}. In order to state this property and derive some of its consequences we need
to define the notion of morphism between operated structures.
\begin{defn}
 Let $\Omega$ be a set, $(U,\beta_U)$ and $(V,\beta_V)$ be two $\Omega$-operated sets (resp. semigroups, monoids, vector spaces, algebras). A 
 {\bf morphism of $\Omega$-operated sets} (resp. semigroups, monoids, vector spaces, algebras) between $U$ and $V$ is a map (resp. a semigroup 
 morphism, a monoid morphism, a linear map, an algebra morphism) $\phi:U\mapsto V$ such that, for any $\omega$ in $\Omega$ and $u$ in $U$
 \begin{equation*}
  \phi(\beta_U(\omega,u)) = \beta_V(\omega,\phi(u)).
 \end{equation*}
 In other words, $\phi$ is such that diagram \ref{fig:operated_morphism} commutes.
\end{defn}
 \begin{figure}[h!] 
  		\begin{center}
  			\begin{tikzpicture}[->,>=stealth',shorten >=1pt,auto,node distance=3cm,thick]
  			\tikzstyle{arrow}=[->]
  			
  			\node (1) {$\Omega\times U$};
  			\node (2) [right of=1] {$U$};
  			\node (3) [below of=1] {$\Omega\times V$};
  			\node (4) [right of=3] {$V$};

  			\path
  			(1) edge node [above] {$\beta_U$} (2)
  			(1) edge node [left] {$ {I_\Omega\times}\phi$} (3)
  			(3) edge node [below] {$\beta_V$} (4)
  			(2) edge node [right] {$\phi$} (4);
  			
  			\end{tikzpicture}
  			\caption{morphism of operated structures. }\label{fig:operated_morphism}
  		\end{center}
  	\end{figure}

\begin{thm} \cite{KP,G1} \label{thm:univ_prop_tree}
 Let $\Omega$ be a set. Then $\calf_\Omega$ is the initial object in the category of commutative $\Omega$-operated algebras, i.e. for any 
 commutative $\Omega$-operated algebra $(A,\beta)$,  {there exists} a unique algebra morphism 
 $\Phi:\calf_\Omega\mapsto A$ such that diagram \ref{fig:univ_prop_forests} commutes
 \begin{figure}[h!] 
  		\begin{center}
  			\begin{tikzpicture}[->,>=stealth',shorten >=1pt,auto,node distance=3cm,thick]
  			\tikzstyle{arrow}=[->]
  			
  			\node (1) {$\calf_\Omega$};
  			\node (2) [right of=1] {$\calf_\Omega$};
  			\node (3) [below of=1] {$A$};
  			\node (4) [right of=3] {$A$};

  			\path
  			(1) edge node [above] {$B_+^\omega$} (2)
  			(1) edge node [left] {$\Phi$} (3)
  			(3) edge node [below] {$\beta^\omega$} (4)
  			(2) edge node [right] {$\Phi$} (4);
  			
  			\end{tikzpicture}
  			\caption{Universal property of forests.}\label{fig:univ_prop_forests}
  		\end{center}
  	\end{figure}
 for every $\omega$ in $\Omega$.
\end{thm}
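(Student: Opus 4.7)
I would prove this by induction on the number of vertices, building $\Phi$ recursively and verifying each compatibility, then proving uniqueness by the same induction.

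\textbf{Existence.} First, I would exploit the fact that every nonempty rooted tree $T\in\calf_\Omega$ admits a canonical decomposition $T=B_+^\omega(F)$, where $\omega\in\Omega$ is the decoration of the root and $F=T_1\cdots T_k$ is the forest obtained by removing the root (with its incident edges); this decomposition is unique because the root is characterised as the minimum of the partial order. This lets me define $\Phi$ recursively on trees:
\begin{equation*}
 \Phi(\emptyset):=1_A, \qquad \Phi\bigl(B_+^\omega(F)\bigr):=\beta^\omega\bigl(\Phi(F)\bigr),
\end{equation*}
then extend $\Phi$ multiplicatively on forests by $\Phi(T_1\cdots T_k):=\Phi(T_1)\cdots\Phi(T_k)$, which is unambiguous because $A$ is commutative and the free commutative algebra $\calf_\Omega$ identifies $T_{\sigma(1)}\cdots T_{\sigma(k)}$ with $T_1\cdots T_k$ for every permutation $\sigma$; finally, extend $\Phi$ by $\R$-linearity to all of $\calf_\Omega$. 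The recursion terminates because the number of vertices strictly decreases at each recursive call.

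\textbf{Verification that $\Phi$ is an operated algebra morphism.} That $\Phi$ is an algebra morphism is built in: it sends the empty forest (the unit of $\calf_\Omega$) to $1_A$ and is multiplicative on products of trees, so multiplicativity extends to all of $\calf_\Omega$ by $\R$-linearity. For the operated compatibility, I need $\Phi\circ B_+^\omega=\beta^\omega\circ\Phi$ on all of $\calf_\Omega$. By $\R$-linearity it suffices to check this on forests $F=T_1\cdots T_k$, where $B_+^\omega(F)$ is by definition a tree with root decorated by $\omega$ and branches $T_1,\dots,T_k$, and then $\Phi(B_+^\omega(F))=\beta^\omega(\Phi(F))$ is exactly the recursive definition.

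\textbf{Uniqueness.} Let $\Phi':\calf_\Omega\to A$ be any morphism of $\Omega$-operated algebras. I would show $\Phi'(F)=\Phi(F)$ for every forest $F$ by induction on $|F|$. For $|F|=0$, $F=\emptyset$ is the unit, so $\Phi'(\emptyset)=1_A=\Phi(\emptyset)$ since $\Phi'$ is an algebra morphism. For the inductive step, if $F=T_1\cdots T_k$ is a nontrivial forest, multiplicativity of $\Phi'$ reduces the claim to the case of a single tree; and if $T=B_+^\omega(F')$ is a tree, then by the operated compatibility $\Phi'(T)=\beta^\omega(\Phi'(F'))$, which equals $\beta^\omega(\Phi(F'))=\Phi(T)$ by the induction hypothesis since $|F'|<|T|$. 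Extending by $\R$-linearity gives $\Phi'=\Phi$ on all of $\calf_\Omega$.

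\textbf{Main obstacle.} The only nontrivial point is the unambiguous canonical decomposition $T=B_+^\omega(F)$ of a tree into its root and its branches, which underwrites the well-foundedness of the recursion; once this structural observation is in place, both existence and uniqueness are routine parallel inductions. A closely related subtlety is that the multiplicative extension from trees to forests must be consistent with the free commutative algebra structure of $\calf_\Omega$, which is exactly where commutativity of $A$ is used; without it, the assignment on the generators would not descend to the quotient identifying products of trees up to reordering.
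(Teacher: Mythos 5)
Your proof is correct: the recursive definition of $\Phi$ via the unique root decomposition $T=B_+^\omega(F)$, the multiplicative/linear extension (using commutativity of $A$ to make the extension to unordered products of trees well defined), and the parallel induction for uniqueness together constitute the standard argument for this universal property. Note that the paper itself does not prove this theorem but cites \cite{KP,G1} (with an alternative proof in \cite{CGPZ2}), so there is no in-paper proof to compare against; your structural induction is exactly the expected one.
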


These universal properties allow to lift maps on the decorating sets to maps on forests; using the original map to define an operation. This can be carried out in various ways, and we introduce 
here those that will be used later on. Such branchings were  introduced in \cite{CGPZ1} and further used in \cite{CGPZ2}.
\begin{defn} \label{defn:phi_hat}
 Let $\Omega$ be a commutative algebra and $\phi:\Omega\mapsto\Omega$ be a map. Let $\beta_\phi:\Omega\times\Omega\mapsto\Omega$ be the 
 operation of $\Omega$ on itself defined by 
 $\beta_\phi(\omega_1,\omega_2) := \phi(\omega_1.\omega_2)$. The {\bf branched $\phi$-map} (or branching of $\phi$) is the morphism of 
 commutative $\Omega$-operated algebras 
 $\widehat{\phi}:(\calf_\Omega,B_+)\mapsto(\Omega,\beta_\phi)$ whose existence and unicity is given by Theorem \ref{thm:univ_prop_tree}.
\end{defn}
Notice that the map $\widehat\phi$ is entirely determined by the relations
\begin{align}
 \widehat\phi(\emptyset) & = 1_\Omega, \nonumber \\
 \widehat\phi(F_1F_2) & = \widehat\phi(F_1)\widehat\phi(F_2), \label{eq:prod_hat_phi} \\
 \widehat\phi(B_+^\omega(F)) & = \phi\left(\omega\widehat\phi(F)\right). \nonumber
\end{align}

\begin{ex}
 Let $\Omega$ and $\phi$ be as in the above Definition. We give some examples of the action of $\widehat\phi$:
 \begin{align*}
  \widehat{\phi}(\emptyset) = 1_\Omega; \qquad & \widehat{\phi}(\tdun{$\omega$}) = \phi(\omega); \qquad \widehat{\phi}\left(\tddeux{$\omega_1$}{$\omega_2$} \right) = \phi\left(\omega_1\phi(\omega_2)\right); \\
  \widehat{\phi}\left(\tdtroisun{$\omega_1$}{$\omega_2$}{~$\omega_3$}\right) & =  \phi\left(\omega_1\phi(\omega_2)\phi(\omega_3)\right).
 \end{align*}
\end{ex}

\begin{defn} \label{defn:phi_sharp}
 Let $\Omega_1,\Omega_2$ be two commutative algebras and $\phi:\Omega_1\mapsto\Omega_2$ be a map. Let 
 $\tilde\beta_\phi:\Omega_1\times\calf_{\Omega_2}\mapsto\calf_{\Omega_2}$ be the operation of $\Omega_1$ on 
 $\calf_{\Omega_2}$ defined by  $\tilde \beta_{\phi}(\omega_1,F) := B_+(\phi(\omega_1),F)$. The {\bf lifted $\phi$-map} (or lifting of $\phi$) 
 is the morphism of commutative $\Omega_1$-operated algebras 
 $\phi^\sharp:(\calf_{\Omega_1},B_+)\mapsto(\calf_{\Omega_2},\tilde\beta_{\phi})$ whose existence and unicity is given by Theorem 
 \ref{thm:univ_prop_tree}.
\end{defn}

\begin{ex}
 Let $\Omega_1,\Omega_2$ and $\phi$ be as in the above Definition. We then have
 \begin{align*}
  \phi^\sharp(\emptyset) = \emptyset; \qquad & \phi^\sharp(\tdun{$\omega$}) = \tdun{$\phi(\omega)$}\hspace{0.4cm}; \qquad \phi^\sharp\left(\tddeux{$\omega_1$}{$\omega_2$} \right) = \tddeux{$\phi(\omega_1)$}{$\phi(\omega_2)$}\hspace{0.5cm}; \\ 
  \phi^\sharp\left(\tdtroisun{$\omega_1$}{$\omega_2$}{~$\omega_3$}\right) & =  \tdtroisun{$\phi(\omega_1)$}{$\phi(\omega_2)$}{$\phi(\omega_3)\quad$}\hspace{0.6cm}.
 \end{align*}

\end{ex}

\subsection{From trees to words}

  We recall here some well-known structures that one can build on the set of words written in a given alphabet, referring the reader to one of 
  the many introductions of the combinatorics of words (e.g. \cite{G2}) for a more detailed exposition. 
\begin{defn} \label{defn:shuffle_prod}
 \begin{enumerate}
  \item The {\bf concatenation product} $\sqcup:\calw_\Omega\times\calw_\Omega\mapsto\calw_\Omega$ is defined by
  \begin{align*}
   \emptyset\sqcup w = w\sqcup\emptyset & = w, \\
   (\omega_1\cdots\omega_k)\sqcup(\omega_1'\cdots\omega_n') &  = (\omega_1\cdots\omega_k\omega_1'\cdots\omega_n')
  \end{align*}
  for any word $w$ in $\calw_\Omega$ and letters $\omega_1,\cdots,\omega_k,\omega_1',\cdots,\omega_n'$ in $\Omega$.
  \item Let $\Omega$ (resp. $(\Omega,\bullet)$) be a set (resp. a commutative semigroup). The {\bf shuffle product} (resp. the 
  {\bf $\lambda$-shuffle product}) is recursively defined by
  \begin{align*}
   \emptyset\shuffle w = w\shuffle\emptyset & = w, \\
   \left((\omega)\sqcup w\right) \shuffle \left((\omega')\sqcup w'\right) & = (\omega)\sqcup\left[w \shuffle \left((\omega')\sqcup w'\right) \right] + (\omega')\sqcup\left[\left((\omega)\sqcup w\right)\shuffle w'\right] 
  \end{align*}
  (resp. by
  \begin{align*}
   \emptyset\shuffle_\lambda w = w\shuffle_\lambda\emptyset & = w, \\
   \left((\omega)\sqcup w\right) \shuffle_\lambda \left((\omega')\sqcup w'\right) & = (\omega)\sqcup\left[w \shuffle_\lambda \left((\omega')\sqcup w'\right) \right] + (\omega')\sqcup\left[\left((\omega)\sqcup w\right)\shuffle_\lambda w'\right]  \\
										  & + \lambda(\omega\bullet\omega')\sqcup\left[w\shuffle_\lambda w'\right] 
  \end{align*}
  for any $\lambda$ in $\R$) extended by bilinearity to products 
  $\shuffle,\shuffle_\lambda:\calw_\Omega\times\calw_\Omega\longrightarrow\calw_\Omega$.
 \end{enumerate}

\end{defn}

\begin{rk} \label{rk:set_monoid}
 \begin{itemize}
  \item We notice that $\shuffle=\shuffle_0$. Therefore we will not distinguish the shuffle and $\lambda$-shuffle and always 
  implies  that the stated results will hold 
  for $\Omega$ a set without a semigroup structure when $\lambda =0$.
  \item It is well-known that $\shuffle_\lambda$ is associative and commutative for any $\lambda$ in $\R$ (see e.g. \cite{Ho3}).
  \item For $\lambda=1$, one finds back the {\bf stuffle product} (or sticky shuffle) written $\stuffle$.
  \item For $\lambda=-1$, we call the product $\shuffle_{-1}$ the {\bf anti-stuffle} product.
 \end{itemize}

\end{rk}
Following \cite{CGPZ2}, we can now define a map from forests to words. It follows from the observation that 
$(\calw_\Omega,\emptyset,\shuffle_\lambda)$ is a commutative algebra operated by $\Omega$ through the natural map 
\begin{align} \label{eq:defn_c_plus}
 C_+ : ~ \Omega & \times\calw_\Omega \mapsto\calw_\Omega \\
         (\omega & ,w) \mapsto C_+^\omega(w):=(\omega)\sqcup w. \nonumber
\end{align}
\begin{defn} \label{defn:flattening}
 Let $\Omega$ be a commutative semigroup and $\lambda$ a real number. The {\bf flattening map of weight $\lambda$} is the morphism of 
 $\Omega$-operated algebras $fl_\lambda:(\calf_\Omega,B_+)\mapsto(\calw_\Omega,C_+)$ whose 
 existence and unicity is given by Theorem \ref{thm:univ_prop_tree}. 
\end{defn}
Notice that this map is entirely determined by the following relations:
\begin{align*}
 fl_\lambda(\emptyset) & = \emptyset \\
 fl_\lambda(F_1F_2) & = fl_\lambda(F_1)\shuffle_\lambda fl_\lambda(F_2) \\
 fl_\lambda(B_+^\omega(F)) & = (\omega)\sqcup fl_\lambda(F).
\end{align*}
Before we move on, let us state a simple yet important property of this  
flattening map.
\begin{prop} \label{prop:finite_Q_sum_flattening}
 Let $\Omega$ be a commutative semigroup and $\lambda$ a rational (resp. integer) number. For any finite forest $F$ in 
 $\calf_\Omega$, $fl_\lambda(F)$ is a 
 finite sum of finite words with rational (resp. integer) coefficients.
\end{prop}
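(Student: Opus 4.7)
The plan is to prove the proposition by structural induction on the forest $F$, using the recursive characterization of $fl_\lambda$ given just after Definition \ref{defn:flattening}. Writing $\Q_\lambda$ for $\Q$ (resp. $\Z$) when $\lambda$ is rational (resp. integer), and $\calw_\Omega^{\Q_\lambda}\subseteq\calw_\Omega$ for the set of finite sums of words with coefficients in $\Q_\lambda$, the proposition is equivalent to the inclusion $fl_\lambda(\calf_\Omega)\subseteq\calw_\Omega^{\Q_\lambda}$. Since $fl_\lambda(\emptyset)=\emptyset$, $fl_\lambda(B_+^\omega(F))=(\omega)\sqcup fl_\lambda(F)$, and $fl_\lambda(F_1F_2)=fl_\lambda(F_1)\shuffle_\lambda fl_\lambda(F_2)$, and since concatenation on the left by a letter trivially preserves $\Q_\lambda$-coefficients, the whole proof reduces to a single stability lemma.

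The key sub-lemma I would isolate first is the following: for any $\lambda\in\Q_\lambda$, the $\lambda$-shuffle product restricts to a map $\shuffle_\lambda:\calw_\Omega^{\Q_\lambda}\times\calw_\Omega^{\Q_\lambda}\longrightarrow\calw_\Omega^{\Q_\lambda}$. By $\R$-bilinearity of $\shuffle_\lambda$ (followed by distributing $\Q_\lambda$-linearly on finite sums), it suffices to check this when both arguments are single words $w=(\omega_1)\sqcup w_1$ and $w'=(\omega_1')\sqcup w_1'$. I would proceed by induction on $|w|+|w'|$. The base case, where one of the words is empty, is immediate since $\shuffle_\lambda$ acts as the identity and the other word has integer (in fact trivially $\pm 1$) coefficient. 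For the inductive step, the recursive formula
\begin{equation*}
  w\shuffle_\lambda w' = (\omega_1)\sqcup\bigl[w_1\shuffle_\lambda w'\bigr] + (\omega_1')\sqcup\bigl[w\shuffle_\lambda w_1'\bigr] + \lambda(\omega_1\bullet\omega_1')\sqcup\bigl[w_1\shuffle_\lambda w_1'\bigr]
\end{equation*}
expresses $w\shuffle_\lambda w'$ in terms of three smaller shuffles, to which the induction hypothesis applies, combined by addition, left-concatenation by a letter, and multiplication by $\lambda$. All three of these operations stabilize $\calw_\Omega^{\Q_\lambda}$ (the third precisely because $\lambda\in\Q_\lambda$), which closes the induction.

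With the sub-lemma in hand, the main induction on $n:=|F|=|V(F)|$ is immediate. For $n=0$ we have $F=\emptyset$ and $fl_\lambda(\emptyset)=\emptyset\in\calw_\Omega^{\Q_\lambda}$. For $n\geq 1$, either $F$ is a single tree $F=B_+^\omega(F')$ with $|F'|=n-1$, in which case $fl_\lambda(F)=(\omega)\sqcup fl_\lambda(F')$ lies in $\calw_\Omega^{\Q_\lambda}$ by the induction hypothesis applied to $F'$ and the trivial stability of $\calw_\Omega^{\Q_\lambda}$ under left-concatenation, or $F=F_1F_2$ is a proper product of forests with $|F_1|,|F_2|<n$, in which case $fl_\lambda(F_1), fl_\lambda(F_2)\in\calw_\Omega^{\Q_\lambda}$ by the induction hypothesis and the sub-lemma ensures $fl_\lambda(F)=fl_\lambda(F_1)\shuffle_\lambda fl_\lambda(F_2)\in\calw_\Omega^{\Q_\lambda}$.

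There is no genuine obstacle here: the statement is essentially a bookkeeping consequence of the fact that the only scalar appearing in the recursive definition of $\shuffle_\lambda$ is $\lambda$ itself. The only point worth emphasising is that one must prove the sub-lemma on $\shuffle_\lambda$ before running the main induction, since the product step $fl_\lambda(F_1F_2)$ exits the realm of words of shape $(\omega)\sqcup w$ that is directly controlled by the branching recursion.
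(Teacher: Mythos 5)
Your proposal is correct and follows essentially the same route as the paper: an induction on the number of vertices of $F$, resting on the stability of $\Q$- (resp.\ $\Z$-) coefficient sums of words under left concatenation and under the $\lambda$-shuffle. The only difference is that you spell out the $\shuffle_\lambda$-stability sub-lemma by a separate induction on $|w|+|w'|$, whereas the paper simply invokes the closure of $\Q$ (resp.\ $\Z$) under addition and multiplication.
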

\begin{proof}
 This result is easily shown by induction on the number of vertices of $F$. It follows from the fact that the flattening of the empty 
 forest is trivially a finite sum of finite words with integer coefficients; that (provided $\lambda$ is rational; resp. integer) the 
 $\lambda$-shuffle 
 of two finite sums of finite words with rational (resp. integer) coefficients is a finite sum of finite words with rational (resp. integer) 
 coefficients since 
 $\Q$ (resp. $\Z$) is stable under multiplication and addition; and 
 finally that the concatenation by one letter of a finite sum of finite words with rational (resp. integer) coefficients is a finite sum of 
 finite words 
 with rational (resp. integer) coefficients.
\end{proof}

\subsection{Flattening and Rota-Baxter maps}

As in \cite{CGPZ2}, one builds counterparts of Definitions \ref{defn:phi_hat} and \ref{defn:phi_sharp} in the context of words.
\begin{defn}
 Let $\Omega$ be a commutative algebra, $\Omega_1$ and $\Omega_2$ be two sets. Let $\iota_\Omega$ 
 (resp. $\iota_{\Omega_1}$, $\iota_{\Omega_2}$) be the canonical inclusion of words into trees, which to any word associate a 
 ladder tree. Let $\phi:\Omega\mapsto\Omega$ and $\psi:\Omega_1\mapsto\Omega_2$ be two maps. The {\bf W-branched $\phi$ map} 
 $\widehat\phi_\calw:\calw_\Omega\mapsto\Omega$ is defined by 
 $\widehat\phi_\calw:=\widehat\phi\circ\iota_\Omega$; and the {\bf W-lifted $\phi$ map} $\phi^\sharp_\calw:\calw_\Omega\mapsto$ by 
 $\phi^\sharp_\calw:=\iota_{\Omega_2}^{-1}\circ\phi^\sharp\circ\iota_{\Omega_1}$.
\end{defn}
\begin{rk}
 \begin{itemize}
  \item The W-lifted $\phi$ map is well defined since $\phi^\sharp$ maps ladder trees to ladder trees, thus the image of 
  $\phi^\sharp\circ\iota_{\Omega_1}$ is included in the image of $\iota_{\Omega_2}$.
  \item The maps $\widehat\phi_\calw$ and $\phi^\sharp_\calw$ can be recursively defined by the relations:
  \begin{align*}
  \widehat\phi_\calw(\emptyset) & := 1_\Omega \\
  \widehat\phi_\calw\left((\omega)\sqcup w\right) & := \phi\left(\omega\widehat\phi_\calw(w)\right)
 \end{align*}
 for $\widehat\phi_\calw$ and 
 \begin{align*}
  \phi^\sharp_\calw(\emptyset) & := \emptyset \\
  \phi^\sharp_\calw\left((\omega)\sqcup w\right) & := \phi(\omega)\sqcup\phi^\sharp_\calw(w)
 \end{align*}
 for $\phi^\sharp_\calw$.
 \end{itemize}

\end{rk}
The various structures described above are linked through the notion of Rota-Baxter map. A gentle introduction to this rich subject can be 
found in \cite{G2}.
\begin{defn}
 Let $\Omega$ be a commutative algebra and $\lambda$ a real number. A map $P:\Omega\mapsto\Omega$ is a {\bf Rota-Baxter map of weight 
 $\lambda$} if
 \begin{equation} \label{eq:RBO}
  P(a)P(b) = P(aP(b)) + P(P(a)b) + \lambda P(ab)
 \end{equation}
 for any $a$ and $b$ in $\Omega$.
\end{defn}
Let us give standard examples of Rota-Baxter maps, which will be of importance in the next Sections.
\begin{ex} \label{ex:RBmap}
 \begin{enumerate}
  \item Let $X\subseteq L^1_{\rm loc}(\R)$ be an algebra of locally integrable functions on $\R$ equipped with the  {pointwise} product of 
  functions stable under the integration map $f\mapsto\left(x\to\int_a^xf(x)dx\right)$. For any 
  $a,b$ in $\R$, the integration map is a Rota-Baxter operator of weight $0$. \label{ex:RB_integration}
  \item Let $l(\R)$ be the algebra of sequences on $\R$ equipped with the  {pointwise} product of sequences. The summation operator 
  $\Sigma:l(\R)\mapsto l(\R)$ defined by $\Sigma(u_n)(N):=\sum_{n=0}^N u_n$ is a Rota-Baxter map of weight $-1$. \label{ex:RB_sum_star}
  \item Let $\frakt_{-1}:\N^*\mapsto\N$ be the translation map by $-1$ and $\frakt^*_{-1}$ be its pull-back to $l(\R)$. Then $\frakt^*\Sigma$ is 
  a Rota-Baxter map of weight $+1$. \label{ex:RB_sum}
 \end{enumerate}

\end{ex}

We state a result of \cite{CGPZ1} for which we provide an original, straightforward proof.
\begin{thm} \cite[Theorem 2.13]{CGPZ1} \label{thm:flattening}
 Let $\Omega$ be a commutative algebra and $P:\Omega\mapsto\Omega$ a linear map. For any real number $\lambda$, the following statements are equivalent:
 \begin{enumerate}
  \item $P$ is a Rota-Baxter map of weight $\lambda$.
  \item The $P$-branched map factorises through words: $\widehat{P}=\widehat{P}_\calw\circ fl_\lambda$. \label{thm:ii}
  \item $\widehat P_\calw$ is a morphism for the $\lambda$-shuffle product, namely 
  $\widehat P_\calw(w\shuffle_\lambda w')=\widehat P_\calw(w)\widehat P_\calw(w')$ for any $w,w'$ in $\calw_\Omega$. \label{thm:iii}
 \end{enumerate}
\end{thm}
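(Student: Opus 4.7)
The plan is to prove the equivalence by running the cycle (1) $\Rightarrow$ (3) $\Rightarrow$ (2) $\Rightarrow$ (1), each step being either a short induction or a direct specialization once the recursive definitions of $\widehat{P}$, $\widehat{P}_\calw$ and $fl_\lambda$ are unpacked.

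For (1) $\Rightarrow$ (3), I would argue by induction on $|w|+|w'|$. The case when one of the words is empty is trivial because $w\shuffle_\lambda\emptyset = w$. For the inductive step, write $w = (\omega)\sqcup v$ and $w' = (\omega')\sqcup v'$, expand $w\shuffle_\lambda w'$ via the three-term recursive definition of $\shuffle_\lambda$, and apply the recursion $\widehat{P}_\calw((\omega)\sqcup u) = P(\omega\,\widehat{P}_\calw(u))$ to each summand. The inductive hypothesis converts each shuffled sub-word into a plain product of two $\widehat{P}_\calw$-values, so with the shorthand $a := \omega\,\widehat{P}_\calw(v)$ and $b := \omega'\,\widehat{P}_\calw(v')$ the resulting expression reads $P(aP(b)) + P(P(a)b) + \lambda P(ab)$, which by the Rota-Baxter identity \eqref{eq:RBO} equals $P(a)P(b) = \widehat{P}_\calw(w)\,\widehat{P}_\calw(w')$.

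For (3) $\Rightarrow$ (2), I would induct on the number of vertices of $F$. The empty case is immediate. If $F = F_1 F_2$ with both factors non-empty, the morphism property \eqref{eq:prod_hat_phi} of $\widehat{P}$, the relation $fl_\lambda(F_1 F_2) = fl_\lambda(F_1)\shuffle_\lambda fl_\lambda(F_2)$, and hypothesis (3) reduce the identity to the inductive hypothesis on $F_1$ and $F_2$. If instead $F = B_+^\omega(F')$, then \eqref{eq:prod_hat_phi} gives $\widehat{P}(F) = P(\omega\,\widehat{P}(F'))$, while the defining recursions of $fl_\lambda$ and $\widehat{P}_\calw$ give $\widehat{P}_\calw(fl_\lambda(F)) = P(\omega\,\widehat{P}_\calw(fl_\lambda(F')))$; induction on $F'$ equates the two. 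Finally for (2) $\Rightarrow$ (1) I would specialize (2) to the two-vertex forest $\tdun{$a$}\,\tdun{$b$}$: the left side equals $P(a)P(b)$ by the morphism property of $\widehat{P}$, while $fl_\lambda(\tdun{$a$}\,\tdun{$b$}) = (a)\shuffle_\lambda(b) = (a)(b) + (b)(a) + \lambda(ab)$ produces, via the recursion for $\widehat{P}_\calw$, the right side $P(aP(b)) + P(P(a)b) + \lambda P(ab)$ (using the commutativity of $\Omega$ to identify $P(bP(a))$ with $P(P(a)b)$), which is precisely \eqref{eq:RBO}.

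The main obstacle is the bookkeeping in step (1) $\Rightarrow$ (3): one must line up the three terms produced by the recursive $\lambda$-shuffle with the three terms of the Rota-Baxter identity. In particular, the correction term $\lambda(\omega\bullet\omega')\sqcup(v\shuffle_\lambda v')$ produces $\lambda P(\omega\omega'\,\widehat{P}_\calw(v)\,\widehat{P}_\calw(v'))$, whereas the Rota-Baxter axiom furnishes $\lambda P(ab) = \lambda P(\omega\,\widehat{P}_\calw(v)\,\omega'\,\widehat{P}_\calw(v'))$; reconciling the two factors requires the commutativity of $\Omega$ built into Definition \ref{defn:phi_hat}, and this is the one place where the commutativity hypothesis is genuinely used.
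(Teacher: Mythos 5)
Your proposal is correct and follows essentially the same route as the paper: the same cycle $(1)\Rightarrow(3)\Rightarrow(2)\Rightarrow(1)$, with the same inductions on $|w|+|w'|$ and on $|F|$, the same case split between $F=F_1F_2$ and $F=B_+^\omega(F')$, and the same specialization to the two-vertex forest $\tdun{$a$}\,\tdun{$b$}$ for the last implication. You also correctly pinpoint the one place where commutativity of $\Omega$ is genuinely needed, exactly as in the paper's argument.
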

\begin{rk}
 \begin{itemize}
  \item This statement was proven in \cite{CGPZ2} in the more general framework of locality structures. This Theorem can be seen as the case 
  where the independence relation is complete: $\top=\Omega\times\Omega$.
  \item The proof of this result was obtained in \cite{CGPZ2} by using a locality version of universal properties of words. We propose here a 
  more straightforward proof, which also holds in the locality framework.
 \end{itemize}
\end{rk}
\begin{proof}
 $(ii)\Rightarrow(i)$: For any $a$, $b$ in $\Omega$, applying $\widehat P$ to the forest $\tdun{a}\tdun{b}$ gives 
 $\widehat{P}(\tdun{a}\tdun{b})=P(a)P(b)$. On the other hand, if 
 $\widehat{P}=\widehat{P}_\calw\circ fl_\lambda$ holds one has
 \begin{align*}
  \widehat{P}(\tdun{a}\tdun{b}) & = \widehat{P}_\calw(fl_\lambda(\tdun{a}\tdun{b})) = \widehat{P}_\calw((a)\shuffle_\lambda(b)) = \widehat{P}_\calw((a)\sqcup(b)) + \widehat{P}_\calw((b)\sqcup(a)) + \lambda\widehat{P}_\calw((ab)) \\
				  & = P(aP(b)) + P(P(a)b) + \lambda P(ab);  
 \end{align*}
 thus $P$ is a Rota-Baxter map of weight $\lambda$.
 
 $(i)\Rightarrow(iii)$: Assuming that $(i)$ holds, we prove this result by induction on $p=|w|+|w'|$. Let $P$ be a Rota-Baxter operator of 
 weight $\lambda$.
 \begin{itemize}
  \item If $p=0$, then $w=w'=\emptyset$ and the result trivially holds as both sides are equal to $1$.
  \item For $p$ a natural number, let us assume that 
  $\widehat P_\calw(w\shuffle_\lambda w')=\widehat P_\calw(w)\widehat P_\calw(w')$ for any words $w$ and $w'$ such that $|w|+|w'|\leq p$. Let 
  $w$ and $w'$ be two words such that $|w|+|w'|=p+1$. If $w=\emptyset$ or $w'=\emptyset$ the result once again trivially holds. Otherwise 
  there
  exist two words $\tilde w$ and $\tilde w'$ (eventually empty) and $\omega,\omega'$ in $\Omega$ such that $w=(\omega)\sqcup\tilde w$ and 
  $w'=(\omega')\sqcup\tilde w'$. Then by definition of $\widehat P_\calw$ we have
  \begin{align*}
   \widehat P_\calw(w)\widehat P_\calw(w') & = P\left(\omega\wPw(\tilde w)\right)P\left(\omega'\wPw(\tilde w')\right) \\
					   & = P\left(\omega\wPw(\tilde w)P\left(\omega'\wPw(\tilde w')\right)\right) + P\left(P\left(\omega\wPw(\tilde w)\right)\omega'\wPw(\tilde w')\right) + \lambda P\left(\omega\wPw(\tilde w)\omega'\wPw(\tilde w')\right) \\
					   & = P\left(\omega\wPw(\tilde w)\wPw(w')\right) + P\left(\wPw(w)\omega'\wPw(\tilde w')\right) + \lambda P\left(\omega\wPw(\tilde w)\omega'\wPw(\tilde w')\right)   
  \end{align*}
  were we have used that $P$ is a Rota-Baxter map of weight $\lambda$.
  
  On the other hand we have by definition of $\shuffle_\lambda$ and from the definition of $\wPw$:
  \begin{align*}
   \wPw(w\shuffle_\lambda w') & = \wPw\left((\omega)\sqcup(\tilde w\shuffle_\lambda w')\right) + \wPw\left((\omega')\sqcup(w\shuffle_\lambda\tilde w')\right) + \lambda \wPw\left((\omega\omega')\sqcup(\tilde w\shuffle_\lambda\tilde w')\right) \\
			      & = P\left(\omega\wPw(\tilde w\shuffle_\lambda w')\right) + P\left(\omega'\wPw(w\shuffle_\lambda\tilde w')\right) + \lambda P\left(\omega\omega'\wPw(\tilde w\shuffle_\lambda\tilde w')\right) \\
			      & = P\left(\omega\wPw(\tilde w)\wPw(w')\right) + P\left(\omega'\wPw(w)\wPw(\tilde w')\right) + \lambda P\left(\omega\omega'\wPw(\tilde w)\wPw(\tilde w')\right) \\
			      & \hspace{8cm}\quad \text{by the induction hypothesis}\\
			      & = P\left(\omega\wPw(\tilde w)\wPw(w')\right) + P\left(\wPw(w)\omega'\wPw(\tilde w')\right) + \lambda P\left(\omega\wPw(\tilde w)\omega'\wPw(\tilde w')\right) 
  \end{align*}
  by commutativity of $\Omega$. This concludes this part of the proof.
 \end{itemize}
 
 $(iii)\Rightarrow(ii)$: Assuming that $(iii)$ holds, we once again prove this result by induction, this time on $p=|F|$.
 \begin{itemize}
  \item If $p=0$ then $F=\emptyset$ and the result trivially holds.
  \item For any $p$ a natural number, let us assume that $\wP(F)=\wPw(fl_\lambda(F))$ for any forest $F$ such that $|F|\leq p$. Let $F$ be a 
  forest 
  with exactly $p+1$ vertices. Therefore $F$ is nonempty and we have either $F=T=B_+^\omega(\tilde F)$ for some (eventually empty) forest 
  $\tilde F$ or $F=F_1F_2$ with $F_1$ and $F_2$ nonempty forests. In the first case we have
  \begin{align*}
   \wP(F=T=B_+^\omega(\tilde F)) & = P\left(\omega\wP(\tilde F)\right) \quad \text{by definition of }\wP \\
				 & = P\left(\omega\wPw(fl_\lambda(\tilde F))\right) \quad \text{by the induction hypothesis} \\
				 & = \wPw\left((\omega)\sqcup fl_\lambda(\tilde F)\right) \quad \text{by definition of }\wPw \\
				 & = \wPw\left(fl_\lambda(F)\right) \quad \text{by definition of }fl_\lambda.
  \end{align*}
  In the second case we have
  \begin{align*}
   \wP(F=F_1F_2) & = \wP(F_1)\wP(F_2) \quad \text{by definition of }\wP \\
		 & = \wPw\left(fl_\lambda(F_1)\right)\wPw\left(fl_\lambda(F_2)\right) \quad \text{by the induction hypothesis} \\
		 & = \wPw\left(fl_\lambda(F_1)\shuffle_\lambda fl_\lambda(F_2)\right) \quad \text{since }(iii)\text{ holds by assumption} \\
		 & = \wPw\left(fl_\lambda(F_1F_2)\right) \quad \text{by definition of }fl_\lambda.
  \end{align*}
  This conclude this induction.
 \end{itemize}
 Hence we have proven three implications, thus the theorem.
\end{proof}

\section{Stuffle arborified zeta values} \label{section:stuffle}

\subsection{Log-polyhomogeneous symbols}

We follow the presentation of \cite{MP} and \cite{Pa} to introduce log-polyhomogeneous symbols.

\begin{defn}
 \begin{enumerate}
  \item Let $r\in\R$. A smooth function $\sigma:\R_{\geq 1}\to\C$ is a {\bf symbol} (with constant coefficients) of order $r\in\R$ on 
  $\R_{\geq 1}$ if
  \begin{equation}\label{eq:symbol}
   \forall k\in\N,~\exists C_k:~\forall x\in\R_{\geq 1}, \ |\partial_x^k\sigma(x)|\leq C_k x^{r-k}.
  \end{equation}
  The set of such symbols of order $r$ on $\R_{\geq 1}$ (with constant coefficients) is written $\mathcal{S}^r(\R_{\geq 1})$, which is a real 
  vector space since $r\leq r'~\Longrightarrow S^r(\R_{\geq1})\subseteq S^{r'}(\R_{\geq1})$. 
  \item Let $\alpha\in\R$. A symbol $\sigma:\R_{\geq 1}\longmapsto\R$ is a {\bf classical symbol} also called {\bf poly-homogeneous symbol} on 
  $\R_{\geq 1}$ of order $\alpha\in\R$ if it  lies in $\mathcal{S}^{\alpha}(\R_{\geq 1})$ and if there exists a sequence 
  $a_{\alpha-j}\in\R, j\in \N$ such that for any $N\in \N$
  \begin{equation}\label{eq:sigmaN} 
   \sigma_{(N)}(x):=\sigma(x)-\sum_{j=0}^{N-1} a_{\alpha-j}x^{\alpha-j}
  \end{equation}
  lies in $\mathcal{S}^{\alpha-N}(\R_{\geq 1})$. We shall write
  \begin{equation*}
   \sigma  \sim\sum \sigma_{\alpha -j}
  \end{equation*}
  The set of classical symbols on $\R_{\geq 1}$ of order $\alpha$ is written $CS^{\alpha}(\R_{\geq 1})$, and the set of symbols of order 
  less of equal than $\alpha$ is a real vector space. Notice that one needs to be careful, as the difference of two symbols 
  of order $\alpha$ can be of order $\alpha-1$, 
  and we set
  \begin{equation*}
   CS (\R_{\geq 1}):=\sum _{\alpha\in\C}CS^{\alpha}(\R_{\geq 1})
  \end{equation*} 	
  the linear span of classical symbols of all orders. 
  \item Let $k\in\N$ and $\alpha\in\R$. A {\bf log-polyhomogeneous symbol of order $(\alpha,k)$} is a function $f:\R_{\geq1}\longmapsto\R$ 
  such that, for any $x\in\R_{\geq1}$
  \begin{equation} \label{eq:log_symb}
   f(x) = \sum_{l=0}^kf_l(x)\log^l(x)
  \end{equation}
  with $f_l\in CS^\alpha(\R_{\geq1})$. We write $\calp^{\alpha;k}(\R_{\geq1})$ the real vector space spanned by log-polyhomogeneous symbol of 
  order 
  $(\alpha,k)$. We also define
  \begin{equation*}
   \calp^{\alpha;*}(\R_{\geq1}) := \bigcup_{k\in\N}\calp^{\alpha;k}(\R_{\geq1}); \quad \calp^{*;*}(\R_{\geq1}) := \bigcup_{k\in\N}\calp^{*;k}(\R_{\geq1}).   
  \end{equation*}
  with $\calp^{*;k}(\R_{\geq1})$ the linear span of $\R$ of  all $\calp^{\alpha;k}(\R_{\geq1})$ for $\alpha\in\R$.
 \end{enumerate}
\end{defn}
\begin{rk}
 \begin{itemize}
  \item We choose to restrict our discussion to the case $\alpha\in\R$. Complex-valued symbols (resp. classical, log-polyhomogeneous) can 
  be introduced in a similar fashion, however we will not require this level of generality for the construction and study of stuffle arborified 
  zeta 
  values.
  \item We further choose to take our symbols on $\R_{\geq1}$ rather than on $\R^n$ or $\R_{\geq0}$, once again in order to simplify the 
  presentation. 
  \item  Besides these restrictions, our definition of log-polyhomogeneous symbols differs of the one of \cite{MP} as we require the 
  coefficients 
  functions $f_l$ to be classical symbols rather than symbols. This is because these objects appear more naturally in the subsequent 
  developments.
 \end{itemize}
\end{rk}
For any connected subset $I$ of $\R_{\geq1}$ we write $\calp^{\alpha,k}(I)$ for sets of log-polyhomogeneous symbols over $I$. These objects 
have the same definition 
as that of  $\mathcal{P}^{\alpha;k}(\R_{\geq1})$ with $x\in\R_{\geq1}$ replaced by $x\in I$. Notice that 
for two connected 
subsets $I$, $J$ of $\R_{\geq1}$, $I\subseteq J\Rightarrow \calp^{\alpha,k}(J)\subseteq\calp^{\alpha,k}(I)$.

The following Lemma is a direct consequence of the definition of log-polyhomogeneous symbols and standard results of real analysis:
\begin{lem} \label{lem:existence_lim}
 Let $\sigma\in\mathcal{P}^{\alpha;k}(I)$ be a log-polyhomogeneous symbol over $I\subseteq\R_{\geq1}$. If $\alpha<0$, then $\sigma$ admits a finite
 limit in $+\infty$ for any $k$. 
 If $\alpha=0$ then $\sigma$ admits a finite limit in $+\infty$ if and only if $k=0$.
\end{lem}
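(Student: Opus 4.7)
My plan is to unpack the definition of $\sigma\in\calp^{\alpha;k}(I)$ as $\sigma(x)=\sum_{l=0}^k f_l(x)\log^l(x)$ with each $f_l\in CS^{\alpha}(I)$ and to analyze the asymptotic behavior of each summand using the classical symbol expansion $f_l\sim\sum_{j\geq0}a_{\alpha-j}^{(l)}x^{\alpha-j}$.

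For the case $\alpha<0$, the argument will be quite direct. Since $f_l\in CS^\alpha\subseteq S^\alpha$, the symbol estimate \eqref{eq:symbol} at $k=0$ yields a constant $C_l$ with $|f_l(x)|\leq C_l x^\alpha$. Multiplying by $\log^l(x)$ gives $|f_l(x)\log^l(x)|\leq C_l x^\alpha\log^l(x)$, and standard real analysis gives $x^\alpha\log^l(x)\to0$ as $x\to+\infty$ whenever $\alpha<0$, for any $l\in\N$. Summing the finitely many terms $l=0,\dots,k$ then gives $\sigma(x)\to 0$, a finite limit.

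For the case $\alpha=0$, $k=0$, I use \eqref{eq:sigmaN} with $N=1$ to write $f_0(x)=a_0^{(0)}+r(x)$ with $r\in S^{-1}(I)$, so $r(x)\to 0$ and $\sigma(x)=f_0(x)\to a_0^{(0)}$. For the reverse direction with $\alpha=0$ and $k\geq 1$, I plan to argue by extracting the leading asymptotic. Without loss of generality, one may reduce to the situation where the top coefficient $f_k$ satisfies $a_0^{(k)}\neq 0$ (otherwise $f_k\in CS^{-1}$, so $f_k(x)\log^k(x)\to 0$ by the first case with $\alpha=-1$, and $\sigma$ is effectively of order $(0;k-1)$ up to a vanishing term; induction on $k$ then applies). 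Assuming $a_0^{(k)}\neq 0$, write $f_k(x)=a_0^{(k)}+r_k(x)$ with $r_k\in S^{-1}(I)$ and use $|f_l(x)|\leq C_l$ for $l<k$ to estimate
\begin{equation*}
\sigma(x)=a_0^{(k)}\log^k(x)+r_k(x)\log^k(x)+\sum_{l=0}^{k-1}f_l(x)\log^l(x),
\end{equation*}
where $r_k(x)\log^k(x)\to 0$ and the remaining sum is $O(\log^{k-1}(x))$. Hence $\sigma(x)\sim a_0^{(k)}\log^k(x)\to\pm\infty$, contradicting the existence of a finite limit.

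The only mildly subtle point is handling the reduction step in the $\alpha=0$, $k\geq 1$ case, since the decomposition $\sigma=\sum f_l\log^l$ is not literally unique as a member of $\calp^{0;k}$ (a function in $\calp^{0;0}$ is also in $\calp^{0;k}$ for every $k$). The cleanest formulation is therefore to say that $\sigma$ has a finite limit at $+\infty$ if and only if \emph{some} representation has $k=0$, equivalently, the leading $\log$-coefficient vanishes. Once this is made precise, the comparison of $\log^k$ against $\log^{k-1}$ asymptotics closes the argument.
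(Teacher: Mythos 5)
Your proof is correct and supplies exactly the argument the paper leaves implicit: the paper gives no proof of this lemma, stating only that it is ``a direct consequence of the definition of log-polyhomogeneous symbols and standard results of real analysis'', and your decomposition $\sigma=\sum_l f_l\log^l$ with the bounds $|f_l(x)|\leq C_l x^\alpha$ and the expansion $f_l=a_0^{(l)}+r_l$, $r_l\in S^{-1}$, is the intended route. Your remark on the ``only if'' direction is a genuine precision the paper glosses over: since $\calp^{0;0}\subseteq\calp^{0;k}$, the statement is only true when read as ``the minimal admissible $k$ is $0$'', equivalently that every representation has leading log-coefficients $f_l$ ($l\geq1$) of order $\leq-1$, and your induction on $k$ via the dichotomy $a_0^{(k)}=0$ versus $a_0^{(k)}\neq0$ handles this correctly.
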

In the following sections we will focus on the cases $\alpha\in\Z$, which is the reason why we need to introduce log-polyhomogeneous symbols 
rather than working only with classical symbols, as the space of classical symbols with integer orders is not stable under 
taking primitives.

Let us recall important properties of log-polyhomogeneous symbols:
\begin{prop} (\cite[Proposition 2.55]{Pa}) \label{prop:translation}
 Let $\alpha\in\R$ and $k\in\N$. Then for any real number $a<0$, the pull-back translation map $\mathfrak{t}^*_a$ sends  
 $\mathcal{P}^{\alpha;k}(\R_{\geq1})$ to $\mathcal{P}^{\alpha;k}(\R_{\geq1-a})$ i.e.
 for any $\sigma\in\mathcal{P}^{\alpha;k}(\R_{\geq1})$
 \begin{equation*}
  \mathfrak{t}^*_a\sigma:= \left(x\longmapsto\sigma(x+a)\right)
 \end{equation*}
 is an element of $\mathcal{P}^{\alpha;k}(\R_{\geq1-a})$.
\end{prop}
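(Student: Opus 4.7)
The plan is to reduce by linearity to a single term $f(x)\log^l(x)$ with $f \in CS^\alpha(\R_{\geq 1})$ and $0 \leq l \leq k$, and then treat the classical-symbol factor and the logarithmic factor separately. Note that since $a<0$, we have $1-a>1$ and $x+a \geq 1$ whenever $x \geq 1-a$, so $\mathfrak{t}^*_a \sigma$ is at least well-defined on $\R_{\geq 1-a}$.

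\textbf{Step 1: translation preserves $CS^\alpha$.} First I would show $\mathfrak{t}^*_a f \in CS^\alpha(\R_{\geq 1-a})$. Starting from the asymptotic expansion $f \sim \sum_{j \geq 0} a_{\alpha-j} x^{\alpha-j}$, the binomial series
\[
(x+a)^{\alpha-j} = x^{\alpha-j}\sum_{m \geq 0}\binom{\alpha-j}{m}\Bigl(\frac{a}{x}\Bigr)^m
\]
converges absolutely for $x > |a|$, in particular for $x \geq 1-a$. Reindexing by $n = j+m$ produces a formal series $\sum_{n\geq 0} b_{\alpha-n}x^{\alpha-n}$, which is the candidate for the classical expansion of $\mathfrak{t}^*_a f$. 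To justify this rigorously, I would truncate $f = \sum_{j<N'} a_{\alpha-j} x^{\alpha-j} + f_{(N')}$, evaluate at $x+a$, and apply Taylor's formula with remainder to each $(x+a)^{\alpha-j}$, choosing $N'$ large enough so that all pieces not captured by the partial sum $\sum_{n<N}b_{\alpha-n}x^{\alpha-n}$ are in $\mathcal{S}^{\alpha-N}(\R_{\geq 1-a})$. The symbol estimates \eqref{eq:symbol} for $\mathfrak{t}^*_a f$ follow from those of $f$: differentiation commutes with $\mathfrak{t}^*_a$, and for $x \geq 1-a$ one has $1 \leq x+a \leq x$, so $|(\mathfrak{t}^*_a f)^{(k)}(x)| = |f^{(k)}(x+a)| \leq C_k(x+a)^{\alpha-k} \leq C'_k x^{\alpha-k}$ (with a possibly different constant depending on the sign of $\alpha-k$, using $1 \leq x+a \leq x$).

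\textbf{Step 2: handling the logarithm.} On $\R_{\geq 1-a}$ one has
\[
\log(x+a) = \log(x) + \log\!\left(1+\tfrac{a}{x}\right),
\]
and $\log(1+a/x)$ is a classical symbol of order $-1$ on $\R_{\geq 1-a}$, with expansion $\sum_{m \geq 1}\frac{(-1)^{m+1}a^m}{m}\,x^{-m}$ (convergent since $|a/x| \leq |a|/(1-a) < 1$). Expanding by the binomial theorem,
\[
\log^l(x+a) = \sum_{m=0}^{l}\binom{l}{m}\log^{l-m}(x)\bigl(\log(1+a/x)\bigr)^m,
\]
where $(\log(1+a/x))^m$ is a classical symbol of order $-m$ (products of classical symbols being classical with additive orders). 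Multiplying by $\mathfrak{t}^*_a f \in CS^\alpha(\R_{\geq 1-a})$ and summing over $l \leq k$, the coefficient of $\log^{l-m}(x)$ lies in $CS^\alpha(\R_{\geq 1-a})$, and the highest power of $\log x$ appearing is $k$ (reached for $m=0$, $l=k$). This gives exactly the decomposition \eqref{eq:log_symb} of $\mathfrak{t}^*_a \sigma$ with coefficients in $CS^\alpha(\R_{\geq 1-a})$.

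\textbf{Main obstacle.} The delicate point is Step~1: verifying that the reindexed formal series is genuinely the asymptotic expansion of $\mathfrak{t}^*_a f$, i.e. controlling both the Taylor remainders of the binomial series and the symbol remainders $f_{(N')}(x+a)$ uniformly, and checking the derivative bounds. Once this is done, Step~2 is essentially algebraic, relying only on stability of $CS^\alpha$ under product and the Leibniz expansion of $(\log x + \log(1+a/x))^l$.
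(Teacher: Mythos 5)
Your argument is correct, but note that the paper itself offers no proof of this statement: it is recalled verbatim from \cite[Proposition 2.55]{Pa} (with the remark that the proofs there carry over to the slightly modified definition of $\calp^{\alpha;k}$), so there is no in-paper argument to compare yours against. Your two steps are sound: the derivative bounds for $\mathfrak{t}^*_a f$ follow from $1\leq x+a\leq x\leq(1-a)(x+a)$ on $\R_{\geq 1-a}$, which controls $(x+a)^{\beta}$ by $x^{\beta}$ up to a constant for either sign of $\beta$; the reindexed binomial expansion does give the classical expansion of $\mathfrak{t}^*_a f$ once the truncation indices are matched as you describe; and in Step~2 the coefficient of $\log^{l-m}(x)$ lands in $CS^{\alpha-m}\subseteq CS^{\alpha}$ because $m\in\N$, which is exactly the integer-shift inclusion of Proposition \ref{prop:bifiltration}. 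A slightly more economical route to Step~1, closer to what is done in \cite{Pa} and \cite{MP}, is to Taylor-expand $f$ itself at $x$, writing $f(x+a)=\sum_{j<N}\frac{a^{j}}{j!}f^{(j)}(x)+R_N(x)$ with the integral form of the remainder: each $f^{(j)}$ lies in $CS^{\alpha-j}$ (the classical analogue of Lemma \ref{lem:deriv_log}) and $R_N\in \mathcal{S}^{\alpha-N}(\R_{\geq 1-a})$, so one never has to re-expand the individual homogeneous components; but your component-by-component version reaches the same conclusion.
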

Furthermore, one easily checks the following property:
\begin{prop} \label{prop:bifiltration}
 The space $\calp^{*;*}(\R_{\geq1})$ is a bifiltration, i.e. for any $\alpha,\beta\in\R$ and $k,l\in\N$ we have
\begin{gather*}
  \alpha \leq \beta~\wedge~\alpha-\beta\in\Z \quad \Longrightarrow \quad \calp^{\alpha;k}(\R_{\geq1}) \subseteq \calp^{\beta;k}(\R_{\geq1});\\
  k \leq l\quad \Longrightarrow \quad \calp^{\alpha;k}(\R_{\geq1}) \subseteq \calp^{\alpha;l}(\R_{\geq1}); \\
   \calp^{\alpha;k}(\R_{\geq1}).\calp^{\beta;l}(\R_{\geq1}) \subseteq \calp^{\alpha+\beta;k+l}(\R_{\geq1}).
 \end{gather*}

\end{prop}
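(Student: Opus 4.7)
The plan is to prove the three inclusions separately, with the first two being essentially bookkeeping and the third relying on the standard Leibniz-rule argument for classical symbols.

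For the first inclusion, I write $\beta = \alpha + n$ with $n \in \N$ (which is exactly what the hypothesis $\alpha \le \beta$ and $\alpha-\beta\in\Z$ gives us). Given $f = \sum_{l=0}^k f_l \log^l \in \calp^{\alpha;k}(\R_{\geq1})$, I need to view each classical symbol $f_l \in CS^\alpha(\R_{\geq1})$ as an element of $CS^\beta(\R_{\geq1})$. This is done by extending the asymptotic series $f_l \sim \sum_{j\geq0} a_{\alpha-j}^{(l)} x^{\alpha-j}$ with $n$ zero leading coefficients: set $b_{\beta-j}^{(l)} = 0$ for $j < n$ and $b_{\beta-j}^{(l)} = a_{\alpha-(j-n)}^{(l)}$ for $j \ge n$. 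The verification that $f_l - \sum_{j=0}^{N-1} b_{\beta-j}^{(l)} x^{\beta-j}$ lies in $\mathcal{S}^{\beta-N}(\R_{\geq1})$ splits into the case $N\leq n$ (where the subtraction is trivial and one uses $f_l \in \mathcal{S}^\alpha \subseteq \mathcal{S}^{\beta-N}$) and the case $N > n$ (where one uses the classical expansion of $f_l$ to order $N-n$).

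The second inclusion is immediate: if $f = \sum_{l=0}^k f_l \log^l$ with $f_l \in CS^\alpha$, set $f_l := 0 \in CS^\alpha$ for $k < l \leq l'$; then $f = \sum_{l=0}^{l'} f_l \log^l \in \calp^{\alpha;l}(\R_{\geq1})$.

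For the product inclusion, given $f \in \calp^{\alpha;k}$ and $g \in \calp^{\beta;l}$, I expand
\begin{equation*}
 f(x)g(x) = \sum_{i=0}^k\sum_{j=0}^l f_i(x)g_j(x)\log^{i+j}(x) = \sum_{m=0}^{k+l}\Bigl(\sum_{i+j=m} f_i g_j\Bigr)\log^m(x),
\end{equation*}
so the task reduces to showing $CS^\alpha(\R_{\geq1}) \cdot CS^\beta(\R_{\geq1}) \subseteq CS^{\alpha+\beta}(\R_{\geq1})$ for classical symbols. The symbol estimate \eqref{eq:symbol} for the product follows from Leibniz's rule: $|\partial_x^N(f_i g_j)| \leq \sum_{p+q=N}\binom{N}{p}|\partial^p f_i||\partial^q g_j| \leq C_N x^{\alpha+\beta-N}$. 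The classical asymptotic expansion of $f_i g_j$ is obtained by formally multiplying the two asymptotic series and grouping by homogeneity degree; the remainder estimate at order $N$ follows by splitting each factor into its polynomial part of length $N$ plus a remainder in $\mathcal{S}^{\alpha-N}$ (resp. $\mathcal{S}^{\beta-N}$) and checking that the cross terms and the product of remainders all lie in $\mathcal{S}^{\alpha+\beta-N}$.

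The main (mild) obstacle is the product statement, specifically verifying that the classical asymptotic expansion of a product of classical symbols truly has the asserted form; this is standard but requires the cross-term bookkeeping just described. Once this is in hand, the bifiltration property follows directly.
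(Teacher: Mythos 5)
Your proof is correct. The paper itself offers no argument for this proposition --- it is introduced with ``one easily checks the following property'' and a pointer to \cite{Pa} for the analogous statements --- and your three verifications (padding the asymptotic expansion with $n$ zero coefficients, which is exactly where the hypothesis $\alpha-\beta\in\Z$ is used; padding with zero coefficient functions in the logarithm degree; and the Leibniz-rule plus cross-term bookkeeping for products of classical symbols) are precisely the standard checks being alluded to. The only blemish is notational: in the second inclusion you use $l$ both as the summation index in $\sum_{l=0}^k f_l\log^l$ and as the target logarithm degree $l'$/$\calp^{\alpha;l}$, so one of the two should be renamed.
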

As pointed out, our definition of $\calp^{\alpha;k}(\R_{\geq1})$ slightly differs from the one in \cite{Pa} however the same proofs still hold.

Finally, in order to lighten the notations, the spaces $\calp^{\alpha;k}(\R_{\geq1})$ (resp. 
$\calp^{\alpha;*}(\R_{\geq1})$; $\calp^{*;k}(\R_{\geq1})$ and $\calp^{*;*}(\R_{\geq1})$) will be 
written $\calp^{\alpha;k}$ (resp. $\calp^{\alpha;*}$; $\calp^{*;k}$ and $\calp^{*;*}$).

\subsection{Euler-MacLaurin formula}

Following the notations of \cite{CGPZ1} we want to define iterated sums of log-polyhomogeneous symbols:
\begin{equation*}
 S(\sigma)(N):=\sum_{n=1}^N\sigma(n).
\end{equation*}
The Euler-MacLaurin formula (see \cite{H}, formula (13.1.1)) relates such a sum with integral and derivatives of the summed function:
\begin{align}\label{eq:EML_sum}
 S(\sigma)(N) & = \int_1^N\sigma(x)dx + \frac{1}{2}\left(\sigma(N)+\sigma(1)\right) \nonumber\\
              & + \sum_{k=2}^K\frac{B_k}{k!}\,\left(\sigma^{(k-1)}(N)- \sigma^{(k-1)}(1)\right) + \frac{(-1)^{K+1}}{K!}\int_1^N \overline{B_{K}}(x)\,\sigma^{(K)}(x)\, dx
\end{align}
where $\overline{B_k}(x)= B_k\left(x-[x] \right)$ with $[x]$ the integer part of the real number $x$, and $B_k(x)$ the $k$-ht Bernoulli 
polynomial. Notice that \eqref{eq:EML_sum} is independent of the choice of $K\geq2$.

Following once more \cite{MP} and \cite{CGPZ2} we define an operator $P$ acting on smooth functions by
\begin{align} \label{eq:EML}
 P(\sigma)(x) & = \int_1^x\sigma(t)dt + \frac{1}{2}\left(\sigma(t)+\sigma(1)\right) \nonumber\\
              & + \sum_{k=2}^K\frac{B_k}{k!}\,\left(\sigma^{(k-1)}(t)- \sigma^{(k-1)}(1)\right) + \frac{(-1)^{K+1}}{K!}\int_1^x \overline{B_{K}}(t)\,\sigma^{(K)}(t)\, dt.  
\end{align}
Notice that for any $N\in\N^*$ we have $P(\sigma)(N)=S(\sigma)(N)$: $P$ interpolates the discrete sum $S(\sigma)$.

\begin{lem} \label{lem:int_log}
 For any $\alpha\in\R\setminus\Z_{\geq-1}$ and $l\in\N$ we have
 \begin{equation*}
  \int:\calp^{\alpha;l}\longmapsto\calp^{\alpha+1;l} + \calp^{0;0}.
 \end{equation*}
 Furthermore 
 \begin{equation*}
  \int:\calp^{-1;l}\longmapsto\calp^{0;l+1} + \calp^{0;0}.
 \end{equation*}
 In both cases, the operator $\int$ is defined by $(\int f)(x):=\int_1^x\sigma(t)dt$.

\end{lem}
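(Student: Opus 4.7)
The plan is to decompose any $f \in \calp^{\alpha;l}$ as $f(x) = \sum_{k=0}^l f_k(x)\log^k x$ with $f_k \in CS^\alpha$, reduce by linearity to the integration of a single $f_k(t)\log^k t$, and show each such integral lies in $\calp^{\alpha+1;k} + \calp^{0;0}$ (respectively $\calp^{0;k+1} + \calp^{0;0}$ when $\alpha = -1$). Summing over $k \leq l$ and invoking the bifiltration of Proposition~\ref{prop:bifiltration} then yields the stated range.

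I would first record the explicit primitives of monomials obtained by iterated integration by parts: for $\beta \neq -1$,
\[
\int_1^x t^\beta \log^k t\, dt = x^{\beta+1} Q_{\beta,k}(\log x) - Q_{\beta,k}(0)
\]
with $Q_{\beta,k}$ a polynomial of degree $k$ satisfying $Q_{\beta,k}(y) = y^k/(\beta+1) - k Q_{\beta,k-1}(y)/(\beta+1)$, while $\int_1^x t^{-1}\log^k t\, dt = \log^{k+1}(x)/(k+1)$. Truncating the classical expansion $f_k(t) = \sum_{j=0}^{N-1} a_{\alpha-j} t^{\alpha-j} + \sigma_N(t)$ with $\sigma_N \in CS^{\alpha-N}$ and integrating term by term, under the hypothesis $\alpha \notin \Z_{\geq -1}$ no exponent $\alpha-j$ equals $-1$, and each contribution $x^{\alpha-j+1} Q_{\alpha-j,k}(\log x)$ lies in $\calp^{\alpha-j+1;k} \subseteq \calp^{\alpha+1;k}$ with the constants collecting in $\calp^{0;0}$. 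In the case $\alpha = -1$ only the $j=0$ term is exceptional, producing $a_{-1}\log^{k+1}(x)/(k+1) \in \calp^{0;k+1}$, the remaining $j \geq 1$ terms sitting in $\calp^{0;k} \subseteq \calp^{0;k+1}$.

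I would next choose $N$ so large that $\alpha - N < -1$, making $\sigma_N(t)\log^k t$ integrable at $+\infty$, and split
\[
\int_1^x \sigma_N(t)\log^k t\, dt = c_N - H_N(x), \qquad c_N := \int_1^\infty \sigma_N(t)\log^k t\, dt \in \calp^{0;0}.
\]
The tail $H_N(x) := \int_x^\infty \sigma_N(t)\log^k t\, dt$ is handled by expanding $\sigma_N$ asymptotically and using the identity $\int_x^\infty t^\beta \log^m t\, dt = -x^{\beta+1}\tilde Q_{\beta,m}(\log x)$ valid for $\beta < -1$ (no constant term since the $+\infty$ boundary vanishes), together with the remainder bound $\bigl|\int_x^\infty t^{\alpha-N-M}\log^k t\, dt\bigr| \leq C x^{\alpha-N-M+1}\log^k x$. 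This identifies $H_N \in \calp^{\alpha-N+1;k} \subseteq \calp^{\alpha+1;k}$ by bifiltration.

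The main technical hurdle is verifying that $H_N$ is a genuine log-polyhomogeneous symbol in the strong sense, i.e.\ that its coefficient functions of $\log^m x$ are classical symbols and that all its derivatives satisfy the matching symbol estimates, rather than merely being bounded by the expected power of $x$. This amounts to the kind of bookkeeping carried out in \cite[Prop.~2.55]{Pa}: the crucial observation is that $H_N'(x) = -\sigma_N(x)\log^k x$ is itself log-polyhomogeneous with classical coefficient, so the estimates on higher derivatives propagate inductively from the classical nature of $\sigma_N$.
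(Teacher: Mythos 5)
Your proposal is correct and follows essentially the same route as the paper: reduce via the classical expansion to explicit primitives of $t^{\beta}\log^k t$ obtained by iterated integration by parts (with the extra logarithm appearing exactly when $\beta=-1$), and then control the integral of the symbol remainder, invoking the bifiltration to collect the orders. The only divergence is in that last step, where you split the remainder integral at $+\infty$ to isolate the constant in $\calp^{0;0}$ and use $H_N'=-\sigma_N\log^k$ to propagate the derivative estimates, whereas the paper simply bounds $\bigl|\int_1^x\tau(t)\log^l (t)\,dt\bigr|$ by $C\int_1^x t^{\alpha}\log^l (t)\,dt$ and reuses the monomial induction; your treatment of this point is, if anything, the more careful one.
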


\begin{proof}
 For any $\alpha\in\R\setminus\Z_{\geq-1}$ and $l\in\N$ we have for $\sigma\in\calp^{\alpha;l}$
 \begin{equation*}
  \int_1^x\sigma(t)dt = \sum_{l=0}^k\left[\sigma_{j=0}^{N_l} a_{\alpha-j}^{(l)}\int_1^x t^{\alpha-j}\log^l (t)dt + \int_1^x\sigma_{(N_l),l}(t)\log^l (t)dt\right]
 \end{equation*}
 where we used \eqref{eq:sigmaN} with obvious notations.
 \begin{itemize}
  \item Using that, if $\alpha\in\R\setminus\Z_{\geq-1}$ then $\alpha-j\in\R\setminus\Z_{\geq-1}$ for any $j\in\N$. Then we prove by induction 
  on $l$ that 
  \begin{equation*}
   x\mapsto\int_1^x t^\alpha\log^l(t)dt\in\calp^{\alpha+1;l}+\calp^{0;0}.
  \end{equation*}
  The case $l=0$ is shown by direct integration.
  
  Assuming that this result holds for some $l\in\N$, we have, by integration by part
  \begin{equation*}
   \int_1^x t^\alpha\log^{l+1}(t)dt = \frac{x^{\alpha+1}}{\alpha+1}\log^l(x) + \frac{l}{\alpha+1} \int_1^x t^\alpha\log^l(t)dt.
  \end{equation*}
  By the induction hypothesis, we have $\left(x\mapsto\int_1^x t^\alpha\log^l(t)dt\right)\in\calp^{\alpha+1;l}+\calp^{0;0}$; then Proposition 
  \ref{prop:bifiltration} allows to conclude this induction.
  \item To end this proof, it is now enough to show that, for any $\alpha\in\R\setminus\Z_{\geq-1}$ and $l\in\N$, if 
  $\tau\in\cals^\alpha(\R_{\geq1})$ then it exists $\rho\in\cals^{\alpha+1}(\R_{\geq1})$ and $K\in\R$ such that
  \begin{equation*}
   \int_1^x\tau(t)\log^l (t)dt = \rho(x)\log^l (x) + K.
  \end{equation*}
  Clearly, $\int$ maps smooth functions of $\R_{\geq1}$ to smooth functions of $\R_{\geq1}$. Moreover, using the 
  bound \eqref{eq:symbol} we have
  \begin{equation*}
   \left|\int_1^x\tau(t)\log^l (t)dt\right| \leq C\int_1^x t^{\alpha}\log^l (t)dt
  \end{equation*}
  For some $C\in\R$. Then the induction of the previous point allows us to conclude.
 \end{itemize}
 In the case of $\calp^{-1;l}$ the same arguments hold but the integration (resp. integration by parts) will add one logarithm.
\end{proof}

We further recall a classical Lemma of the theory of log-polyhomogeneous symbols
\begin{lem} \label{lem:deriv_log}
 The differentiation operator $\partial:\sigma\mapsto\sigma'$ sends $\calp^{\alpha;l}$ to $\calp^{\alpha-1;l}$. We also have, for any 
 $\sigma\in\cals^\alpha(\R_{\geq1})$ that $\partial(\sigma\log^l) = \tau\log^l$ for some $\tau\in\cals^{\alpha-1}$.
\end{lem}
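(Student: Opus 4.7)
The plan is to reduce everything to the Leibniz rule applied to the log-polyhomogeneous decomposition. Given $f \in \calp^{\alpha;l}$, write $f(x) = \sum_{j=0}^{l} f_j(x)\log^j(x)$ with each $f_j \in CS^\alpha(\R_{\geq1})$. Differentiating yields
\[
f'(x) = \sum_{j=0}^{l} f_j'(x)\log^j(x) + \sum_{j=1}^{l} j\,\frac{f_j(x)}{x}\log^{j-1}(x),
\]
so it suffices to show each summand belongs to some $\calp^{\alpha-1;k}$ with $k\leq l$. This reduces the first claim to two classical properties of the algebra $CS(\R_{\geq1})$: that $\partial$ maps $CS^\alpha$ into $CS^{\alpha-1}$, and that $CS^\alpha\cdot CS^{-1}\subseteq CS^{\alpha-1}$ (applied to $f_j(x)/x$, since $1/x\in CS^{-1}$). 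Combining the two gives that every summand lies in $CS^{\alpha-1}\cdot\log^k$ with $k\leq l$, hence $f'\in\calp^{\alpha-1;l}$.

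To establish stability of $CS^\alpha$ under differentiation, I would verify both the symbol estimates \eqref{eq:symbol} and the asymptotic expansion \eqref{eq:sigmaN} directly. The bound $|\partial^k(\sigma')(x)|=|\partial^{k+1}\sigma(x)|\leq C_{k+1}x^{\alpha-1-k}$ gives $\sigma'\in\cals^{\alpha-1}(\R_{\geq1})$. Term-by-term differentiation of the expansion $\sigma\sim\sum_j a_{\alpha-j}x^{\alpha-j}$ produces the candidate $\sigma'\sim\sum_j(\alpha-j)a_{\alpha-j}x^{\alpha-j-1}$, and the remainders $(\sigma_{(N)})'$ again belong to $\cals^{\alpha-1-N}(\R_{\geq1})$ by applying the previous bound to $\sigma_{(N)}$. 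The multiplicative property $CS^\alpha\cdot CS^{-1}\subseteq CS^{\alpha-1}$ is proved by analogous elementary manipulations of the asymptotic expansions together with a standard Leibniz bound on the derivatives.

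For the second claim, applying Leibniz directly to $\sigma\log^l$ with $\sigma\in\cals^\alpha(\R_{\geq1})$ gives
\[
\partial(\sigma\log^l) = \sigma'\log^l + l\,\frac{\sigma(x)}{x}\log^{l-1}(x),
\]
so one sets $\tau:=\sigma'\in\cals^{\alpha-1}(\R_{\geq1})$ for the $\log^l$ coefficient; the lower-order piece $l\,\sigma(x)/x\cdot\log^{l-1}$ likewise belongs to $\cals^{\alpha-1}(\R_{\geq1})\cdot\log^{l-1}$ by the same symbol estimates. The main (rather modest) obstacle is simply the classical verification that differentiation and multiplication by $x^{-1}$ are stable operations on $CS(\R_{\geq1})$; once these ingredients are in hand, an inspection of the log-degree in the Leibniz formula yields both assertions without further work.
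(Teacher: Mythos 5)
The paper offers no proof of this lemma at all --- it is merely ``recalled'' as a classical fact from the theory of log-polyhomogeneous symbols --- so there is no in-paper argument to compare yours against; judged on its own, your proof is the standard verification and is essentially correct. For the first claim, the Leibniz rule correctly reduces everything to the two stability properties $\partial\left(CS^\alpha(\R_{\geq1})\right)\subseteq CS^{\alpha-1}(\R_{\geq1})$ and $CS^{\alpha}(\R_{\geq1})\cdot CS^{-1}(\R_{\geq1})\subseteq CS^{\alpha-1}(\R_{\geq1})$, and your sketches of both (term-by-term differentiation of the expansion \eqref{eq:sigmaN} plus the estimate \eqref{eq:symbol} for the remainders; $x^{-1}\in CS^{-1}$) are sound --- note that the multiplicative property is also the $k=l=0$ case of Proposition \ref{prop:bifiltration}, which you could simply invoke. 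The one point worth flagging concerns the second claim: your own computation
\begin{equation*}
\partial(\sigma\log^l)=\sigma'\log^l+l\,\frac{\sigma(x)}{x}\log^{l-1}(x)
\end{equation*}
shows that the literal identity $\partial(\sigma\log^l)=\tau\log^l$ cannot hold for $l\geq1$ as stated, since the factor $l\sigma(x)x^{-1}/\log(x)$ one would need to absorb into $\tau$ is singular at $x=1$ and hence not an element of $\cals^{\alpha-1}(\R_{\geq1})$ in general. What you actually establish --- that $\partial(\sigma\log^l)$ is a sum $\sum_{k\leq l}\tau_k\log^k$ with each $\tau_k\in\cals^{\alpha-1}(\R_{\geq1})$ --- is the form in which the lemma is used in the proofs of Lemma \ref{lem:int_log} and Proposition \ref{prop:prop_P}, so this is an imprecision in the lemma's formulation rather than a defect of your argument; you should just state explicitly that this corrected version is what you prove.
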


We can now state and prove the main Proposition of this subsection.
\begin{prop} \label{prop:prop_P}
 For any $\alpha\in\R\setminus\Z_{\geq-1}$ and $k\in\N$ we have
 \begin{align*}
  P:\mathcal{P}^{\alpha;k} & \longmapsto \mathcal{P}^{\alpha+1;k} + \calp^{0;0}; \\
  P:\mathcal{P}^{-1;k} & \longmapsto \mathcal{P}^{0;k+1} + \calp^{0;0}. \\
 \end{align*}
\end{prop}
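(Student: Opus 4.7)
The plan is to decompose the Euler-MacLaurin operator in \eqref{eq:EML} into four pieces and treat each using the bifiltration of Proposition \ref{prop:bifiltration} together with Lemmas \ref{lem:int_log} and \ref{lem:deriv_log}. Explicitly, I would write $P(\sigma)(x) = I(x) + E(x) + D(x) + R(x)$, where $I(x) = \int_1^x \sigma(t)\,dt$ is the main integral, $E(x) = \tfrac12(\sigma(x)+\sigma(1))$ is the boundary average, $D(x) = \sum_{j=2}^{K} \tfrac{B_j}{j!}\bigl(\sigma^{(j-1)}(x)-\sigma^{(j-1)}(1)\bigr)$ is the Bernoulli derivative sum, and $R(x)$ is the Bernoulli remainder integral, then verify that each lies in the claimed target space in the two cases $\alpha\in\R\setminus\Z_{\geq-1}$ and $\alpha=-1$.

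The first three terms are routine. Lemma \ref{lem:int_log} places $I$ directly in $\calp^{\alpha+1;k}+\calp^{0;0}$ when $\alpha\notin\Z_{\geq -1}$ and in $\calp^{0;k+1}+\calp^{0;0}$ when $\alpha=-1$, which is exactly the target in each case. The scalars $\sigma(1)$ and $\sigma^{(j-1)}(1)$ sit trivially in $\calp^{0;0}$. The symbol $\sigma$ itself lies in $\calp^{\alpha;k}\subseteq\calp^{\alpha+1;k}$ by Proposition \ref{prop:bifiltration}, since the order shift is an integer. Iterating Lemma \ref{lem:deriv_log} gives $\sigma^{(j-1)}\in\calp^{\alpha-j+1;k}$ for every $j\geq 2$, and since $\alpha-j+1\leq \alpha-1$ differs from $\alpha+1$ by an integer, a second application of the bifiltration swallows these into $\calp^{\alpha+1;k}$. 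In the boundary case $\alpha=-1$ the same inclusions land in $\calp^{0;k}\subseteq\calp^{0;k+1}$, again by Proposition \ref{prop:bifiltration}.

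The main obstacle is the remainder $R(x) = \tfrac{(-1)^{K+1}}{K!}\int_1^x \overline{B_K}(t)\,\sigma^{(K)}(t)\,dt$, because $\overline{B_K}$ is merely bounded and periodic, not a classical symbol, so Lemma \ref{lem:int_log} cannot be applied directly to the product $\overline{B_K}\,\sigma^{(K)}$. The key observation, already noted after \eqref{eq:EML}, is that $P$ is independent of the choice of $K\geq 2$, so I am free to take $K$ as large as I wish. Choosing $K$ so that $\alpha-K<-1$, Lemma \ref{lem:deriv_log} gives $\sigma^{(K)}\in\calp^{\alpha-K;k}$, and the bound inherited from \eqref{eq:symbol} and \eqref{eq:log_symb} makes $\overline{B_K}\,\sigma^{(K)}$ absolutely integrable on $[1,+\infty)$. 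I would then split
\begin{equation*}
R(x) \;=\; \frac{(-1)^{K+1}}{K!}\int_1^{+\infty}\!\!\overline{B_K}(t)\,\sigma^{(K)}(t)\,dt \;-\; \frac{(-1)^{K+1}}{K!}\int_x^{+\infty}\!\!\overline{B_K}(t)\,\sigma^{(K)}(t)\,dt.
\end{equation*}
The first summand is a constant, hence in $\calp^{0;0}$. For the tail, I would use that antiderivatives of $\overline{B_K}$ remain bounded and periodic and integrate by parts repeatedly: each integration by parts differentiates $\sigma^{(K)}$ (lowering its order by Lemma \ref{lem:deriv_log}) and produces boundary terms of the form $\text{(bounded periodic)}\cdot\sigma^{(j)}(x)$, which by the first part of the argument sit in $\calp^{\alpha+1;k}$ (respectively $\calp^{0;k+1}$ when $\alpha=-1$), while the new remainder decays strictly faster. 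Taking $K$ large enough, the final remainder has order below any prescribed negative integer, hence is itself in $\calp^{\alpha+1;k}$ by bifiltration. Combining the four contributions via Proposition \ref{prop:bifiltration} yields the stated image of $P$ in both cases.
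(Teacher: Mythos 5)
Your decomposition of $P$ into the main integral, the boundary average, the Bernoulli derivative sum and the Bernoulli remainder is exactly the structure of the paper's proof, and your treatment of the first three pieces (Lemma \ref{lem:int_log} for $I$, Lemma \ref{lem:deriv_log} plus the bifiltration of Proposition \ref{prop:bifiltration} for $E$ and $D$, with the expected bookkeeping in the case $\alpha=-1$) is correct. Splitting off the constant $\int_1^{+\infty}\overline{B_K}\,\sigma^{(K)}$ as the $\calp^{0;0}$ contribution is also fine.

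The gap is in your treatment of the tail $\int_x^{+\infty}\overline{B_K}(t)\,\sigma^{(K)}(t)\,dt$ for a \emph{fixed} large $K$. First, the boundary terms produced by your integrations by parts are of the form $\overline{B_{K+m}}(x)\,\sigma^{(K+m-1)}(x)$, and these do \emph{not} ``sit in $\calp^{\alpha+1;k}$ by the first part of the argument'': a bounded periodic non-constant factor destroys both polyhomogeneity and the symbol estimate \eqref{eq:symbol}, since differentiating $\overline{B_{K+m}}$ yields $(K+m)\overline{B_{K+m-1}}$ with no gain of decay, so $|\partial_x^j(\overline{B_{K+m}}\sigma^{(K+m-1)})|$ is only $O(x^{\alpha-K-m+1})$ for all $j$, not $O(x^{\alpha-K-m+1-j})$. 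Second, and more structurally, for a fixed $K$ the tail is merely $O(x^{\alpha-K+1}\log^k x)$ with no expansion in powers of $x$; membership in $\calp^{\alpha+1;k}$ requires, for \emph{every} $N$, a residual in $\cals^{\alpha+1-N}$ after subtracting the first $N$ terms of the expansion as in \eqref{eq:sigmaN}, and a fixed $K$ only controls this down to $N\leq K$. So ``taking $K$ large enough'' at the end cannot place the remainder in $\calp^{\alpha+1;k}$ ``by bifiltration''. The paper's (correct) mechanism is different and needs no integration by parts: since \eqref{eq:EML} is independent of $K$, one verifies the defining condition of $CS^{\alpha+1}$ for each $N$ separately by choosing $K\geq N$, so that the entire Bernoulli remainder is a single term of pointwise order $\alpha-K+1\leq\alpha+1-N$ absorbed into the residual $\sigma_{(N)}$, the asymptotic coefficients being supplied by the other three pieces alone. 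If you delete the integration-by-parts step and replace it by this choice of $K=K(N)$ per $N$, your proof coincides with the paper's.
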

\begin{rk}
 This result is a refinement of \cite[Proposition 8]{MP}; which therefore also holds in the slightly  smaller space $\calp^{*;*}$ of the present 
 paper.
\end{rk}

 \begin{proof}
  \begin{enumerate}
   \item 
   Let us start with the case $\alpha\notin\Z_{\geq1}$. Applying Lemmas \ref{lem:int_log} and \ref{lem:deriv_log} in the Euler-MacLaurin 
   formula
   \eqref{eq:EML} we see that its is enough to show that it exists $\tau\in\cals^{\alpha+1-N}$ so that, for a big enough $K$
   \begin{equation*}
    \int_1^x \overline{B_{K}}(t)\,\sigma^{(K)}(t)\, dt = \tau(x)\log^k(x).
   \end{equation*}
   Using that $\overline{B_{K}}(t)$ is bounded and twice Lemma \ref{lem:deriv_log} and Lemma \ref{lem:int_log} we obtain that such a 
   $\tau\in\cals^{\alpha-K+1}$ exists. Therefore we obtain the desired bound by taking $K\geq N$.
   \item The case $\alpha=-1$ is exactly similar, using the second part of Lemma \ref{lem:int_log}.
  \end{enumerate}
 \end{proof}

\subsection{Arborified zeta values as series}

\begin{defn}
 Let $\calr:\N^*\mapsto\calp^{*;*}$ be the map defined by
 \begin{equation*}
  \calr(\alpha):=\left(x\mapsto x^{-\alpha}\right)
 \end{equation*}
 for any $\alpha\in\N^*$. 
\end{defn}
We lift $\calr$ as detailed in Definition \ref{defn:phi_sharp} to obtain the lifted $\calr$-map $\calr^\sharp:\calf_{\N^*}\mapsto\calf_{\calp^{*;*}}$.
\begin{defn}
 For $\lambda\in\{0,-1\}$, we define the operator $\frakS_\lambda$ as $\frakt^*_{\lambda}P$. We further define $\calz_\lambda:=\widehat{\frakS}_\lambda\circ\calr^\sharp:\calf_{\N^*}\mapsto\calp^{*;*}$
\end{defn}
The simple, yet important subsequent Lemma is a consequence of Propositions \ref{prop:prop_P} and \ref{prop:translation}
\begin{lem} \label{lem:ana_prop_frakS}
  For any $\alpha\in\R\setminus\Z_{\geq-1}$, $k\in\N$ and $\lambda\in\{0,-1\}$ we have
 \begin{align*}
  \frakS_\lambda:\mathcal{P}^{\alpha;k} & \longmapsto \mathcal{P}^{\alpha+1;k}(\R_{\geq1-\lambda}) + \calp^{0;0}(\R_{\geq1-\lambda}) \\
  \frakS_\lambda:\mathcal{P}^{-1;k} & \longmapsto \mathcal{P}^{0;k+1}(\R_{\geq1-\lambda}) + \calp^{0;0}(\R_{\geq1-\lambda}). \\
 \end{align*}
\end{lem}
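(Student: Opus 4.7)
The plan is to prove the lemma by decomposing $\frakS_\lambda = \frakt^*_\lambda \circ P$ and applying the mapping properties of each factor in succession.

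First, I would apply Proposition \ref{prop:prop_P} to a log-polyhomogeneous symbol $\sigma\in\calp^{\alpha;k}$. In the case $\alpha\in\R\setminus\Z_{\geq-1}$ this yields $P(\sigma)\in\calp^{\alpha+1;k}(\R_{\geq1}) + \calp^{0;0}(\R_{\geq1})$, while in the case $\alpha=-1$ we get $P(\sigma)\in\calp^{0;k+1}(\R_{\geq1}) + \calp^{0;0}(\R_{\geq1})$. Write $P(\sigma) = \tau + c$ with $\tau$ in the first summand and $c$ in $\calp^{0;0}(\R_{\geq1})$; by linearity of $\frakt^*_\lambda$, it suffices to track these two components separately under the translation.

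Next I would handle the translation step for the two relevant values of $\lambda$. For $\lambda=0$, $\frakt^*_0$ is the identity on smooth functions on $\R_{\geq 1}=\R_{\geq 1-\lambda}$, so the conclusion is immediate. For $\lambda=-1$, I would invoke Proposition \ref{prop:translation} with $a=\lambda=-1<0$: it gives $\frakt^*_{-1}\bigl(\calp^{\beta;m}(\R_{\geq1})\bigr)\subseteq \calp^{\beta;m}(\R_{\geq2}) = \calp^{\beta;m}(\R_{\geq1-\lambda})$ for any $\beta\in\R$ and $m\in\N$. Applied to the orders $(\beta,m)=(\alpha+1,k)$ or $(0,k+1)$ and $(\beta,m)=(0,0)$ respectively, this converts the conclusion of the first step into precisely the two statements of the lemma.

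There is no genuine obstacle here: the two cited propositions compose trivially once one notices that $\lambda\in\{0,-1\}$ satisfies $\lambda\leq 0$, so the hypothesis $a<0$ of Proposition \ref{prop:translation} is met whenever the translation is nontrivial, and the target domain $\R_{\geq 1-\lambda}$ matches exactly. The only point that deserves mention is linearity of both $P$ and $\frakt^*_\lambda$, which justifies splitting the sum $\calp^{\alpha+1;k}+\calp^{0;0}$ (or $\calp^{0;k+1}+\calp^{0;0}$) and translating each summand independently.
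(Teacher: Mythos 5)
Your proof is correct and follows exactly the route the paper intends: the paper offers no written proof but states that the lemma ``is a consequence of Propositions \ref{prop:prop_P} and \ref{prop:translation}'', and your argument simply composes these two results, correctly distinguishing the trivial translation for $\lambda=0$ from the application of Proposition \ref{prop:translation} with $a=-1<0$ for $\lambda=-1$.
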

Before stating the main analytic property of $\calz_\lambda$, let us recall some useful notations for forests decorated by $\N^*$:
for any $\N^*$-decorated forest $(F,d)$, we write
 \begin{enumerate}
  \item $|F|:=|V(F)|$ as usual,
  \item $\sharp_n F$ the number of vertices of $F$ decorated by $n\in\N^*$:
  \begin{equation*}
   \sharp_n F:= |\{v\in V(F):d(v)=n\}|.
  \end{equation*}
 \end{enumerate}

\begin{prop} \label{prop:order_calz}
 For any nonempty $\N^*$-decorated tree $(T,d)$ of root $r$ we have
 \begin{equation*}
  \calz_\lambda(T) \in \calp^{1-d(r),\sharp_1 T}(\R_{\geq1-\lambda|F|}) + \calp^{0;0}(\R_{\geq1-\lambda|F|}).
 \end{equation*}
 For any nonempty $\N^*$-decorated tree $(F,d)$ with $F=T_1\cdots T_k$, with $T_i$ nonempty and of root $r_i$ for any $i\in\{1,\cdots k\}$,
 we have
 \begin{equation*}
  \calz_\lambda(F) \in \calp^{1-r_{\rm min}(F),\sharp_1 F}(\R_{\geq1-\lambda|F|}) + \calp^{0;0}(\R_{\geq1-\lambda|F|})
 \end{equation*}
 with $r_{\rm min}(F):=\min_{i=1,\cdots,k}d(r_i)$.
\end{prop}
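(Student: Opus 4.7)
\textbf{Proof plan for Proposition \ref{prop:order_calz}.} The plan is to proceed by strong induction on the total number of vertices $n$, proving both statements simultaneously. Throughout I will invoke the recursive description of $\widehat{\frakS}_\lambda$ from \eqref{eq:prod_hat_phi}, the analytic bound Lemma \ref{lem:ana_prop_frakS}, and the bifiltration property of Proposition \ref{prop:bifiltration}; note that for $\lambda\in\{0,-1\}$ and $|T'|\leq|F|$ we have $\R_{\geq 1-\lambda|F|}\subseteq\R_{\geq 1-\lambda|T'|}$, so everything can safely be restricted to the common domain $\R_{\geq 1-\lambda|F|}$.

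\textbf{Base case.} When $T=\tdun{n}$ is a single vertex, $\calz_\lambda(T)=\frakS_\lambda(x\mapsto x^{-n})$ and $x^{-n}\in\calp^{-n;0}$. If $n\geq 2$, the first part of Lemma \ref{lem:ana_prop_frakS} gives $\calz_\lambda(T)\in\calp^{1-n;0}(\R_{\geq 1-\lambda})+\calp^{0;0}(\R_{\geq 1-\lambda})$, and here $\sharp_1 T=0$, $d(r)=n$, $|T|=1$, so this matches. If $n=1$, the second part gives $\calz_\lambda(T)\in\calp^{0;1}(\R_{\geq 1-\lambda})+\calp^{0;0}(\R_{\geq 1-\lambda})$, which agrees with $1-d(r)=0$ and $\sharp_1 T=1$.

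\textbf{Inductive step for trees.} For $T=B_+^n(F')$ with $F'$ nonempty of size $|T|-1$, the identity $\calz_\lambda(T)=\frakS_\lambda\!\left(x^{-n}\cdot\calz_\lambda(F')\right)$ holds by \eqref{eq:prod_hat_phi}. By induction, $\calz_\lambda(F')\in\calp^{1-r_{\min}(F'),\sharp_1 F'}+\calp^{0;0}$ on $\R_{\geq 1-\lambda|F'|}$; multiplying by $x^{-n}\in\calp^{-n;0}$ and using Proposition \ref{prop:bifiltration} yields
\[
 x^{-n}\cdot\calz_\lambda(F')\in\calp^{1-r_{\min}(F')-n,\sharp_1 F'}+\calp^{-n;0}.
\]
Applying $\frakS_\lambda$ to each summand via Lemma \ref{lem:ana_prop_frakS} (distinguishing the exceptional case where the order equals $-1$, which happens exactly when $n=r_{\min}(F')=1$ for the first summand, and when $n=1$ for the second) then absorbing the result using Proposition \ref{prop:bifiltration} — crucially, the extra logarithm produced when $n=1$ is compensated by the fact that $\sharp_1 T=\sharp_1 F'+1$ — gives the claimed inclusion on $\R_{\geq 1-\lambda|T|}$. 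The case $F'=\emptyset$ reduces to the base case.

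\textbf{Extension to forests.} If $F=T_1\cdots T_k$ with $k\geq 2$, then $\calz_\lambda(F)=\prod_i\calz_\lambda(T_i)$. By the tree case and the inclusion $\calp^{1-d(r_i),\sharp_1 T_i}\subseteq\calp^{1-r_{\min}(F),\sharp_1 T_i}$ (valid since $d(r_i)-r_{\min}(F)\in\N$, using Proposition \ref{prop:bifiltration}), each factor lies in $\calp^{1-r_{\min}(F),\sharp_1 T_i}+\calp^{0;0}$ on $\R_{\geq 1-\lambda|F|}$. Expanding the product and using multiplicativity of $\calp^{\cdot;\cdot}$ in Proposition \ref{prop:bifiltration}: the mixed terms involving $j\geq 1$ factors of the first type produce an element of $\calp^{j(1-r_{\min}(F)),\sum_{l}\sharp_1 T_{i_l}}\subseteq\calp^{1-r_{\min}(F),\sharp_1 F}$ (again using the bifiltration, since $j(1-r_{\min}(F))\leq 1-r_{\min}(F)\leq 0$ with integer difference), while the pure $\calp^{0;0}$ term lands in $\calp^{0;0}$. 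Summing these contributions concludes the induction.

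\textbf{Expected obstacle.} The delicate point is the careful case analysis in the inductive step: one must check that when $n=1$ or $r_{\min}(F')=1$, the exceptional $-1$ case of Lemma \ref{lem:ana_prop_frakS} produces exactly the extra logarithmic factor needed to match $\sharp_1 T=\sharp_1 F'+\delta_{n,1}$, and that the bifiltration inclusions are available (requiring integer order differences, which is automatic here as all decorations lie in $\N^*$). Once this bookkeeping is handled, each step reduces to a direct application of the previously established structural results.
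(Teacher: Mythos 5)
Your proposal is correct and follows essentially the same route as the paper's proof: induction on the number of vertices, splitting into the single-vertex case, the case $T=B_+^n(F')$ handled via $\calz_\lambda(T)=\frakS_\lambda(x^{-n}\calz_\lambda(F'))$ together with Lemma \ref{lem:ana_prop_frakS} (with the exceptional order $-1$ producing the extra logarithm matched by $\sharp_1 T=\sharp_1 F'+1$), and the forest case via multiplicativity and the bifiltration of Proposition \ref{prop:bifiltration}. The only cosmetic difference is that you expand the full $k$-fold product at once where the paper uses a binary split $F=F_1F_2$; both are equivalent.
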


\begin{proof}
 We prove this result by induction on the number $n=|F|$ of vertices of the forest.
 
 For the case $n=1$ we have, for any $\alpha\in\N^*$
 \begin{equation*}
  \calz_{\lambda}(\tdun{$\alpha$}) = \frakS_\lambda(t\to t^{-\alpha}).
 \end{equation*}
 Since $(t\to t^{-\alpha})$ lies in $\calp^{-\alpha,0}$, the result follows from Lemma \ref{lem:ana_prop_frakS}.
 
 Let us assume this result to hold for any nonempty forests of $n$ or fewer vertices. Let $F$ be a forest of $n+1$ vertices. We distinguish 
 two cases.
 \begin{itemize}
  \item If $F=F_1F_2$ with $F_1$ and $F_2$ non empty then we can write
  \begin{equation*}
   \calz_\lambda(F) = \calz_\lambda(F_1)\calz_\lambda(F_2).
  \end{equation*}
  Indeed $\calz_\lambda$ being the composition of two algebra morphisms, it is an algebra morphism. Then, using the induction hypothesis, the filtration properties 
  of Proposition \ref{prop:bifiltration} (which can be applied since $\calz_\lambda(F_i)$ has integer order) and the remark that $I\subseteq J\Rightarrow \calp^{\alpha,k}(J)\subseteq\calp^{\alpha,k}(I)$, 
  we have 
  \begin{align*}
   \calz_\lambda(F) & \in \calp^{2-r_{\rm min}(F_1)-r_{\rm min}(F_2),\sharp_1 F}(\R_{\geq1-\lambda|F|}) + \calp^{1-r_{\rm min}(F_1),\sharp_1 F_1}(\R_{\geq1-\lambda|F|})  \\ 
    & + \calp^{1-r_{\rm min}(F_2),\sharp_1 F_2}(\R_{\geq1-\lambda|F|}) + \calp^{0;0}(\R_{\geq1-\lambda|F|})
  \end{align*}
  Which, by Proposition \ref{prop:bifiltration} implies the result for $F$ since $1-r_{\rm min}(F_1)\leq0$; $1-r_{\rm min}(F_2)\leq0$ and 
  since each of the orders are integers.
  \item In the case $F=T$ a tree, it exists a nonempty forest $\tilde F$ and a positive natural number $\alpha$ such that 
  $T=B_+^\alpha(\tilde F)$ since $|T|=n+1\geq2$. We then have
  \begin{equation*}
   \calz(T) = \frakS\left(t\to t^{-\alpha}\calz(\tilde F)(t)\right).
  \end{equation*}
  Using the induction hypothesis on $\tilde F$ and the bifiltration property we then obtain
  \begin{equation*}
   t\to t^{-\alpha}\calz(\tilde F)(t) \in \calp^{1-d_{\rm min}-\alpha,\sharp_1\tilde F}(\R_{1-\lambda n}) + \calp^{-\alpha,0}(\R_{1-\lambda n}).
  \end{equation*}
  In the case $\alpha\geq2$, from Lemma \ref{lem:ana_prop_frakS} and the observation that $1-d_{\rm min}\leq0$, we obtain the desired result, once again 
  using Proposition \ref{prop:bifiltration} (since in this case $\sharp_1 T=\sharp_1 \tilde F$), which can be used for the first and the second indices as well.
  
  In the case $\alpha=1$, the same argument holds if one notices that $\sharp_1 T=\sharp_1 \tilde F + 1$.
  
  These two cases allow to conclude the proof of the Theorem.
 \end{itemize}
\end{proof}

\begin{defn}
 A $\N^*$-decorated tree $(T,d)$ is called {\bf convergent} if it is empty or if it has root $r$ and $d(r)\geq2$, i.e. if the decoration of its root is strictly greater than one. A $\N^*$-decorated forest 
 $(F=T_1\cdots T_k,d)$ is called {\bf convergent} if $T_i$ is convergent for each $i\in\{1,\cdots,k\}$. Let $\calf_{\N^*}^{\rm conv}$ be the 
 subalgebra of convergent forests.
 \end{defn}
 It is clear that $\calf_{\N^*}^{\rm conv}$ that is a subalgebra of $\calf_{\N^*}$ since by definition $\emptyset\in\calf_{\N^*}^{\rm conv}$ and $\calf_{\N^*}^{\rm conv}$ is stable by concatenation of forests.
 
 \begin{defnprop} \label{defnprop:arborified_zeta}
 For any convergent $\N^*$-decorated forest $F$ and $\lambda\in\{0,-1\}$, $\calz_\lambda(F)(x)$ admits a finite limit as $x$ goes to infinity. We define the maps 
 $\zeta^T_\stuffle,\zeta^{T,\star}_\stuffle:\calf_{\N^*}^{\rm conv}\mapsto\R$ by
 \begin{equation*}
  \zeta^T_\stuffle(F):=\lim_{x\to\infty}\calz_{-1}(F)(x), \qquad \zeta^{T,\star}_\stuffle(F):=\lim_{x\to\infty}\calz_{0}(F)(x) 
 \end{equation*}
 for any $F\in\calf_{\N^*}^{\rm conv}$.
\end{defnprop}
\begin{proof}
  For any convergent forest $F$ we have $1-d_{\rm min}\leq-1$. Therefore by Lemma \ref{lem:existence_lim} and Proposition
  \ref{prop:order_calz} the limits at $+\infty$ of $\calz_\lambda(F)$ are well-defined and 
  finite for $\lambda\in\{0,-1\}$ provided that $F$ is convergent.
\end{proof}
The definition of AZVs is illustrated by diagram \ref{fig:defn_BZVs}.

Let us notice that the arborified zeta values defined here coincide with the branched zeta values of \cite{CGPZ1}, in the convergent case where the renormalisation scheme reduces to a simple evaluation at $0$ of the regularisation 
parameters. We could therefore have defined AZVs through the regularised branched zeta values of \cite{CGPZ1}. We have not opted for this in order to obtain a self-contained presentation of AZVs. Furthermore, the question of 
renormalisation brings more involved analytic objects, such as meromorphic families of classical symbols, which are unnecessary in the convergent case.

% To conclude this subsection, diagram \ref{fig:defn_BZVs} illustrates the various steps taken to reach the definition of arborified zeta values.
 \begin{figure}[h!] 
  		\begin{center}
  			\begin{tikzpicture}[->,>=stealth',shorten >=1pt,auto,node distance=3cm,thick]
  			\tikzstyle{arrow}=[->]
  			
  			\node (1) {$\calf_{\N^*}^{\rm conv}$};
  			\node (2) [right of=1] {$\R$};
  			\node (3) [below of=1] {$\calf_{\calp^{*;*}}$};
  			\node (4) [right of=3] {$\calp^{*;*}$};

  			\path
  			(1) edge node [above] {$\zeta^T_\stuffle,~\zeta^{T,\star}_\stuffle$} (2)
  			(1) edge node [left] {$\calr^\sharp$} (3)
  			(3) edge node [below] {$\widehat{\frakS}_\lambda$} (4)
  			(1) edge node [left] {$\calz_\lambda$} (4)
  			(4) edge node [right] {${\rm ev}_{\infty}$} (2);
  			
  			\end{tikzpicture}
  			\caption{Definition of arborified zeta values. }\label{fig:defn_BZVs}
  		\end{center}
  	\end{figure}

  	Using the fact that $\calz_\lambda$ and ${\rm ev}_{\infty}$ are both algebra morphisms, we obtain the simple, yet important subsequent Proposition
  	\begin{prop} \label{prop:stuffle_alg_mor}
  	 The maps $\zeta^T_\stuffle$ and $\zeta^{T,\star}_\stuffle$ are algebra morphisms for the concatenation product of trees.
  	\end{prop}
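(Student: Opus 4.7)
The plan is to package the statement as a straightforward composition of algebra morphisms, plus the elementary fact that the evaluation at infinity respects products when the limits exist. Concretely, I would observe that $\calz_\lambda = \widehat{\frakS}_\lambda \circ \calr^\sharp$, and that by construction (Definitions \ref{defn:phi_sharp} and \ref{defn:phi_hat}, whose existence follows from the universal property of Theorem \ref{thm:univ_prop_tree}) both $\calr^\sharp:\calf_{\N^*}\to\calf_{\calp^{*;*}}$ and $\widehat{\frakS}_\lambda:\calf_{\calp^{*;*}}\to\calp^{*;*}$ are morphisms of commutative $\Omega$-operated algebras. In particular each is an algebra morphism for the concatenation of forests (which is exactly the commutative product on $\calf$). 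Hence $\calz_\lambda$ is itself an algebra morphism from $(\calf_{\N^*},\cdot)$ to $(\calp^{*;*},\cdot)$.

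Next I would recall that $\calf_{\N^*}^{\rm conv}$ is a subalgebra of $\calf_{\N^*}$ (this is noted immediately after the definition of convergent forest), so for any $F,F'\in\calf_{\N^*}^{\rm conv}$ the concatenation $FF'$ is again convergent and hence
\begin{equation*}
 \calz_\lambda(FF')(x) = \calz_\lambda(F)(x)\,\calz_\lambda(F')(x)
\end{equation*}
as functions on $\R_{\geq 1-\lambda|FF'|}$.

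To conclude, by Definition-Proposition \ref{defnprop:arborified_zeta} both $\calz_\lambda(F)(x)$ and $\calz_\lambda(F')(x)$ admit finite limits as $x\to\infty$; the product of two functions with finite limits at infinity again has a finite limit equal to the product of the limits, so ${\rm ev}_\infty$ is a (partially defined) algebra morphism on the subalgebra of functions admitting a limit. Applying this to the identity above yields, for $\lambda=-1$,
\begin{equation*}
 \zeta^T_\stuffle(FF') = \lim_{x\to\infty}\calz_{-1}(FF')(x) = \lim_{x\to\infty}\calz_{-1}(F)(x)\cdot \lim_{x\to\infty}\calz_{-1}(F')(x) = \zeta^T_\stuffle(F)\,\zeta^T_\stuffle(F'),
\end{equation*}
and exactly the same argument with $\lambda=0$ for $\zeta^{T,\star}_\stuffle$. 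The compatibility with the unit is trivial, since $\calz_\lambda(\emptyset)=1$ by \eqref{eq:prod_hat_phi}.

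There is essentially no obstacle in this argument: everything reduces to invoking the universality statements that were already used to define $\calr^\sharp$ and $\widehat{\frakS}_\lambda$, together with the elementary multiplicativity of the limit. The only point one must check, which has already been done in the excerpt, is that $\calf_{\N^*}^{\rm conv}$ is closed under concatenation and that convergence at infinity is guaranteed by Proposition \ref{prop:order_calz} and Lemma \ref{lem:existence_lim}; without this closure one would not even have $\zeta^T_\stuffle(FF')$ defined.
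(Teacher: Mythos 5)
Your proposal is correct and follows exactly the paper's route: the paper proves this proposition by the one-line observation that $\calz_\lambda=\widehat{\frakS}_\lambda\circ\calr^\sharp$ and ${\rm ev}_\infty$ are all algebra morphisms, which is precisely the composition argument you spell out (with the additional, correct, checks that $\calf_{\N^*}^{\rm conv}$ is closed under concatenation and that the limits exist). No gaps; your write-up is just a more detailed version of the paper's proof.
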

\begin{rk}
\begin{itemize}
 \item Using conical summation techniques,  Zerbini \cite{Ze} was able to relate the values taken by $\zeta^T_\stuffle$ and $\zeta^{T,\star}_\stuffle$ (private communication).
 \item Finally, let us notice that the techniques used above can also be used to define branched Euler sums which correspond (in the words case) to convergent MZVs with some negative arguments. These objects are an active area of 
 research, see for example \cite{WX} for some recent results.
\end{itemize}
\end{rk}

\subsection{Stuffle arborified zetas and  {multiple zetas}}

The construction of the previous subsection can be adapted to build  {multiple zeta} values instead of arborified zetas simply by replacing 
$\calr^\sharp$ by $\calr^\sharp_\calw$ and $\widehat\frakS_\lambda$ by $\widehat\frakS_{\lambda,\calw}$.
\begin{defn}
 For $\lambda\in\{0,-1\}$, let $\calz_{\lambda,\calw}:\calw_{\N^*}\mapsto\calp^{*;*}$ be the operator defined by 
 $\calz_{\lambda,\calw}:=\calr_\calw^\sharp\circ\widehat\frakS_\calw$.
\end{defn}
\begin{thm} \label{thm:calz_words_order}
 For any nonempty word $w=(\omega_1\cdots\omega_n)$, we have
 \begin{equation*}
  \calz_{\lambda,\calw}(w) \in \calp^{1-\omega_1,\sharp_1 w}(\R_{\geq1-\lambda|w|}) + \calp^{0;0}(\R_{\geq1-\lambda|w|}).
 \end{equation*}
\end{thm}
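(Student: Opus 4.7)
The plan is to proceed by induction on the length $n=|w|$, mimicking the proof of Proposition \ref{prop:order_calz}. In fact, the quickest route is to observe that the compatibility of the W-branching/lifting with the canonical embedding $\iota_{\N^*}:\calw_{\N^*}\hookrightarrow\calf_{\N^*}$ yields
\begin{equation*}
\calz_{\lambda,\calw}(w) \;=\; \calz_\lambda\bigl(\iota_{\N^*}(w)\bigr),
\end{equation*}
since $\calr^\sharp$ sends the ladder tree $\iota_{\N^*}(w)$ to $\iota_{\calp^{*;*}}(\calr^\sharp_\calw(w))$ and $\widehat{\frakS}_{\lambda,\calw}=\widehat{\frakS}_\lambda\circ\iota_{\calp^{*;*}}$. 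For the ladder tree $\iota_{\N^*}(w)$ with $w=(\omega_1\cdots\omega_n)$, the root is decorated by $\omega_1$ and $\sharp_1\iota_{\N^*}(w)=\sharp_1 w$, so Proposition \ref{prop:order_calz} immediately delivers the stated inclusion. I would state this reduction as the main argument.

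For readers who prefer a self-contained derivation on words, I would also give a direct induction using only the recursive definition $\widehat{\frakS}_{\lambda,\calw}((\omega)\sqcup w') = \frakS_\lambda(t\mapsto t^{-\omega}\,\widehat{\frakS}_{\lambda,\calw}\circ\calr^\sharp_\calw(w'))$. The base case $n=1$ is $\calz_{\lambda,\calw}((\omega_1))=\frakS_\lambda(t\mapsto t^{-\omega_1})$, which lies in $\calp^{1-\omega_1,0}+\calp^{0;0}$ when $\omega_1\geq2$ and in $\calp^{0,1}+\calp^{0;0}$ when $\omega_1=1$ by Lemma \ref{lem:ana_prop_frakS}; both cases match the claim since $\sharp_1 w = [\omega_1=1]$. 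For the inductive step, I write $w=(\omega_1)\sqcup\tilde w$ and apply the induction hypothesis to $\tilde w$, whose first letter I call $\tilde\omega_1$, obtaining $\calz_{\lambda,\calw}(\tilde w)\in\calp^{1-\tilde\omega_1,\sharp_1\tilde w}+\calp^{0;0}$. Multiplying by $t^{-\omega_1}$ and invoking the product clause of the bifiltration (Proposition \ref{prop:bifiltration}) yields
\begin{equation*}
t\mapsto t^{-\omega_1}\calz_{\lambda,\calw}(\tilde w)(t)\;\in\;\calp^{1-\tilde\omega_1-\omega_1,\,\sharp_1\tilde w}+\calp^{-\omega_1,0}.
\end{equation*}
Applying $\frakS_\lambda$ via Lemma \ref{lem:ana_prop_frakS} and absorbing terms via the bifiltration inclusions of Proposition \ref{prop:bifiltration} produces the desired conclusion, after a case split on whether $\omega_1=1$ (which increments both $\sharp_1 w$ and potentially the log degree) or $\omega_1\geq2$ (in which case the log degree is inherited from $\tilde w$).

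The delicate point, and really the only one requiring care, is the bookkeeping of the logarithm degree: the degree jumps by one exactly when $\frakS_\lambda$ is applied to a symbol of order $-1$, which happens in the inductive step when both $\omega_1=1$ and the leading exponent of $\calz_{\lambda,\calw}(\tilde w)$ equals $0$ (i.e.\ when $\tilde\omega_1=1$). One must check that in every sub-case the resulting log degree is bounded by $\sharp_1 w = \sharp_1\tilde w+[\omega_1=1]$, and that every spurious order picked up (such as the $\calp^{-\omega_1,0}$ summand coming from the constant-at-infinity part) is absorbed into $\calp^{1-\omega_1,\sharp_1 w}+\calp^{0;0}$ thanks to $1-\omega_1\leq 0$ and the integrality of the orders. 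This case analysis is entirely parallel to the one carried out in the tree case of Proposition \ref{prop:order_calz} and presents no additional difficulty.
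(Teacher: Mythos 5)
Your proposal is correct, and your primary argument takes a genuinely different (and slicker) route than the paper's. The paper simply redoes the induction of Proposition \ref{prop:order_calz} at the level of words, noting that only the grafting case $w=C_+^\omega(\tilde w)$ occurs; your secondary, self-contained induction is essentially that proof, and your bookkeeping of the log degree (the jump occurs exactly when $\frakS_\lambda$ hits order $-1$, and the spurious $\calp^{-\omega_1;0}$ summand is absorbed using $1-\omega_1\leq 0$ and integrality of the orders) matches the case analysis the paper leaves implicit. Your main argument, by contrast, avoids any new induction: since $\widehat{\frakS}_{\lambda,\calw}=\widehat{\frakS}_\lambda\circ\iota_{\calp^{*;*}}$ and $\calr^\sharp_\calw=\iota_{\calp^{*;*}}^{-1}\circ\calr^\sharp\circ\iota_{\N^*}$, one gets $\calz_{\lambda,\calw}=\calz_\lambda\circ\iota_{\N^*}$ directly from the definitions (this is the commutativity of the left square of diagram \ref{fig:MZV_AZV}), and the ladder tree $\iota_{\N^*}(w)$ has root decorated by $\omega_1$, the same number of vertices as $|w|$, and $\sharp_1\iota_{\N^*}(w)=\sharp_1 w$, so Proposition \ref{prop:order_calz} yields the statement verbatim. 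What the reduction buys is economy and the explicit observation that the word-level estimate is literally a special case of the tree-level one; what the paper's (and your secondary) direct induction buys is independence from the tree result, which would matter if one wanted to present the word case first. Both are sound.
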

\begin{proof}
 The proof is carried out in the same fashion as the proof of Proposition \ref{prop:order_calz}, with only the case $w=C_+^\omega(\tilde w)$ to be 
 considered.
\end{proof}
\begin{defn}
 A word $w$ written in the alphabet $\N^*$ is called {\bf convergent} if it is empty or if its first letter on the left is 
 greater or equal to $2$.  Let $\calw_{\N^*}^{\rm conv}$ be the 
 linear span of convergent words.
\end{defn}
As the terminology suggests, convergent words (resp. trees) will lie in the convergence domain of MZVs (resp. AZVs).
\begin{lem} \label{lem:subalgebra_shuffle_lambda}
 For any $\lambda\in\R$, $(\calw_{\N^*}^{\rm conv},\emptyset,\shuffle_\lambda)$ is a subalgebra of $(\calw_{\N^*},\emptyset,\shuffle_\lambda)$ .
\end{lem}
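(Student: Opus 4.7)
The plan is a short induction on the total length $|w_1|+|w_2|$ of two convergent words. First I note that $\emptyset\in\calw_{\N^*}^{\rm conv}$ by definition, so the unit is present; it remains only to show that $\shuffle_\lambda$ preserves convergence. By linearity of $\shuffle_\lambda$ it suffices to check this on basis words $w_1,w_2$, i.e.\ on the empty word or words whose leftmost letter is $\geq 2$.

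The base case covers $|w_1|+|w_2|=0$ and, more generally, the situation where one of the factors is empty: since $\emptyset\shuffle_\lambda w=w\shuffle_\lambda\emptyset=w$, the result is trivially convergent. For the inductive step, assume both $w_1$ and $w_2$ are nonempty convergent basis words. Then there exist letters $\omega,\omega'\in\N^*$ with $\omega,\omega'\geq 2$ and (possibly empty) words $\tilde w_1,\tilde w_2$ such that $w_1=(\omega)\sqcup\tilde w_1$ and $w_2=(\omega')\sqcup\tilde w_2$. Applying the recursive definition of $\shuffle_\lambda$ from Definition \ref{defn:shuffle_prod} gives three summands: $(\omega)\sqcup\bigl[\tilde w_1\shuffle_\lambda w_2\bigr]$, $(\omega')\sqcup\bigl[w_1\shuffle_\lambda \tilde w_2\bigr]$, and $\lambda\,(\omega\bullet\omega')\sqcup\bigl[\tilde w_1\shuffle_\lambda\tilde w_2\bigr]$, where $\bullet$ denotes the semigroup law on $\N^*$ used to build the stuffle (namely addition). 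Each of the inner shuffle products lies in $\calw_{\N^*}^{\rm conv}$ by the induction hypothesis, since each involves strictly smaller total length.

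It remains to observe that the outer concatenation by a single letter preserves convergence in all three cases: the first summand begins with $\omega\geq 2$; the second with $\omega'\geq 2$; and the third (when $\lambda\neq 0$) with $\omega\bullet\omega'=\omega+\omega'\geq 4\geq 2$. Hence each of the three summands is a $\Q$-linear combination of convergent basis words, and so is the full sum. This closes the induction and shows that $w_1\shuffle_\lambda w_2\in\calw_{\N^*}^{\rm conv}$ for any convergent $w_1,w_2$.

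There is no real obstacle here; the only point that requires a touch of care is the $\lambda$-term, where one must use that the semigroup operation on $\N^*$ is additive and hence monotone on the ``convergence threshold'' $\{\omega\geq 2\}$. Once that is noted, the subalgebra property follows immediately, and in fact one sees a stronger statement: the leftmost letter of every word appearing in $w_1\shuffle_\lambda w_2$ is bounded below by $\min(\omega,\omega',\omega+\omega')=\min(\omega,\omega')\geq 2$.
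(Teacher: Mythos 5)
Your proof follows exactly the route the paper sketches (induction on $|w_1|+|w_2|$ via the recursive definition of $\shuffle_\lambda$), and its concluding observation is the correct core of the argument. However, one intermediate sentence is false as written: the inner products $\tilde w_1\shuffle_\lambda w_2$, $w_1\shuffle_\lambda\tilde w_2$ and $\tilde w_1\shuffle_\lambda\tilde w_2$ need \emph{not} lie in $\calw_{\N^*}^{\rm conv}$, and the induction hypothesis does not apply to them, because $\tilde w_1$ and $\tilde w_2$ are in general not convergent words --- for instance $w_1=(2\,1)$ is convergent while $\tilde w_1=(1)$ is not, so $\tilde w_1\shuffle_\lambda w_2$ contains words beginning with $1$. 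Fortunately that claim is never actually needed: as your closing remark makes clear, every basis word occurring in $w_1\shuffle_\lambda w_2$ begins with $\omega$, $\omega'$ or $\omega+\omega'$, each of which is $\geq 2$, and prepending a letter $\geq 2$ to \emph{any} word, convergent or not, yields a convergent word. Deleting the offending sentence leaves a complete proof; in fact the induction itself then becomes superfluous, since a single application of the recursion at the top level already exhibits $w_1\shuffle_\lambda w_2$ as a linear combination of words whose leftmost letter is $\geq 2$.
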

\begin{proof}
 By definition $\emptyset\in\calw_{\N^*}^{\rm conv}$. The rest of the proof is easily carried out by induction on $|w|+|w'|$ for two convergent words 
 $w$ and $w'$, using the induction definition \ref{defn:shuffle_prod} of the product $\shuffle_\lambda$.
\end{proof}

 \begin{defnprop} \label{defnprop:zeta_stuffle}
 For any convergent word written in the alphabet $\N^*$ and $\lambda\in\{0,-1\}$, $\calz_{\lambda,\calw}(w)(x)$ admits a finite limit as $x$ 
 goes to infinity. We define the maps 
 $\zeta_\stuffle,\zeta^{\star}_\stuffle:\calw_{\N^*}^{\rm conv}\mapsto\R$ by
 \begin{equation*}
  \zeta_\stuffle(F):=\lim_{x\to\infty}\calz_{-1,\calw}(F)(x), \qquad \zeta^{\star}_\stuffle(F):=\lim_{x\to\infty}\calz_{0,\calw}(F)(x) 
 \end{equation*}
 for any $w\in\calw_{\N^*}^{\rm conv}$.
\end{defnprop}
\begin{proof}
 The proof of the statement is similar to the proof of Definition-Proposition \ref{defnprop:arborified_zeta}.
\end{proof}
Notice that this approach to  {multiple zeta} values easily yields back well-known (see for example \cite{Wa}) results for these numbers:
\begin{prop} \label{prop:zeta_stuffle_mor}
 The map $\zeta_\stuffle$ is a algebra morphism for the stuffle product $\stuffle=\shuffle_1$. Furthermore the map $\zeta^\star_\stuffle$ is 
 an algebra morphism for the anti-stuffle product $\shuffle_{-1}$.
\end{prop}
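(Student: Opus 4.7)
The plan is to reduce the statement to an application of Theorem \ref{thm:flattening} by passing from log-polyhomogeneous symbols to sequences, where the underlying operator becomes genuinely Rota-Baxter, and then to take limits at infinity. The key observation is that while $\frakS_\lambda$ on $\calp^{*;*}$ is not a Rota-Baxter map in general, its restriction to integer arguments agrees with $\frakt^*_\lambda\Sigma$ applied to the sampled sequence, and $\frakt^*_\lambda\Sigma$ is Rota-Baxter on $l(\R)$ by Example \ref{ex:RBmap}: weight $-1$ for $\lambda=0$ (i.e. $\Sigma$ itself) and weight $+1$ for $\lambda=-1$.

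First I introduce the sequence analog $\tilde\calr:\N^*\to l(\R)$, $\omega\mapsto(n\mapsto n^{-\omega})$, and observe that $\tilde\calr$ is a semigroup morphism from $(\N^*,+)$ to $(l(\R),\cdot)$ since $n^{-(\omega_1+\omega_2)}=n^{-\omega_1}n^{-\omega_2}$. Consequently the associated lifted letter-substitution map $\tilde\calr^\sharp_\calw:\calw_{\N^*}\to\calw_{l(\R)}$ is an algebra morphism for $\shuffle_\nu$ for every $\nu\in\R$, taking the $(\N^*,+)$-shuffle at the source to the $(l(\R),\cdot)$-shuffle at the target (this is immediate from the recursive definition of $\shuffle_\nu$ and of $\phi^\sharp_\calw$).

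Next, Theorem \ref{thm:flattening} applied to the Rota-Baxter operators $\Sigma$ (weight $-1$) and $\frakt^*_{-1}\Sigma$ (weight $+1$) on $l(\R)$ yields that $\widehat\Sigma_\calw$ is an algebra morphism for $\shuffle_{-1}$ and that $\widehat{\frakt^*_{-1}\Sigma}_\calw$ is one for $\shuffle_1=\stuffle$. Composing with $\tilde\calr^\sharp_\calw$ gives two algebra morphisms
\begin{equation*}
 \widehat{\frakt^*_{-1}\Sigma}_\calw\circ\tilde\calr^\sharp_\calw:(\calw_{\N^*},\stuffle)\to l(\R),\qquad \widehat\Sigma_\calw\circ\tilde\calr^\sharp_\calw:(\calw_{\N^*},\shuffle_{-1})\to l(\R).
\end{equation*}
A short induction on $|w|$ verifies that for every $N\in\N^*$,
\begin{equation*}
 \calz_{-1,\calw}(w)(N)=\widehat{\frakt^*_{-1}\Sigma}_\calw\circ\tilde\calr^\sharp_\calw(w)(N),\qquad \calz_{0,\calw}(w)(N)=\widehat\Sigma_\calw\circ\tilde\calr^\sharp_\calw(w)(N),
\end{equation*}
since at integer $N$ one has $P(\sigma)(N)=\sum_{n=1}^N\sigma(n)=\Sigma(\sigma|_{\N^*})(N)$ and the recursion relations for $\widehat{\frakS_\lambda}_\calw$ and $\widehat{\frakt^*_\lambda\Sigma}_\calw$ coincide at sampled points.

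Finally, for convergent words $w,w'$, Lemma \ref{lem:subalgebra_shuffle_lambda} guarantees that $w\stuffle w'$ (resp. $w\shuffle_{-1}w'$) is a finite integer linear combination of convergent words, so each term has a well-defined $\zeta_\stuffle$ (resp. $\zeta^\star_\stuffle$) value by Definition-Proposition \ref{defnprop:zeta_stuffle}. The morphism identity in $l(\R)$ holds at every positive integer $N$; passing to $N\to\infty$ is legitimate since both sides are log-polyhomogeneous symbols of negative order (so admit a finite limit by Lemma \ref{lem:existence_lim} and Theorem \ref{thm:calz_words_order}), and limits commute with the finite sum and single product involved. This yields $\zeta_\stuffle(w\stuffle w')=\zeta_\stuffle(w)\zeta_\stuffle(w')$ and $\zeta^\star_\stuffle(w\shuffle_{-1}w')=\zeta^\star_\stuffle(w)\zeta^\star_\stuffle(w')$. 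The only mildly delicate step is the bookkeeping of which value of $\lambda$ yields which Rota-Baxter weight (and hence which shuffle product); once this correspondence is nailed down the morphism property is a direct consequence of Theorem \ref{thm:flattening}, illustrating the flexibility of the branching framework.
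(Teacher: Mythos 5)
Your proposal is correct and follows essentially the same route as the paper: identify the restriction of $\frakS_{-1}$ (resp.\ $\frakS_0$) to integer arguments with the strict (resp.\ weak) summation operator, which is Rota--Baxter of weight $+1$ (resp.\ $-1$), invoke Theorem \ref{thm:flattening}~\ref{thm:iii}, and pass to the limit $N\to\infty$. The only difference is that you make explicit two steps the paper leaves implicit --- that the letter-substitution map intertwines the $(\N^*,+)$-quasi-shuffle with the pointwise-product quasi-shuffle on sequences, and that Lemma \ref{lem:subalgebra_shuffle_lambda} licenses the limit --- which is careful bookkeeping within the same argument rather than a different proof.
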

\begin{proof}
 By Examples \ref{ex:RBmap}, \ref{ex:RB_sum} (resp. \ref{ex:RBmap}, \ref{ex:RB_sum_star}) we know that $\frakS_{-1}$ (resp. $\frakS_0$), when restricted 
 to $\N^*$, is a Rota-Baxter map of weight $+1$ (resp $-1$). One can then apply Theorem \ref{thm:flattening} \ref{thm:iii}  to get that 
 $N\to\widehat\frakS_{-1,\calw}(N)$ (resp. $N\to\widehat\frakS_{0,\calw}(N)$), is an algebra morphism for the stuffle 
 (resp. anti-stuffle) product. Taking the limit $N\to\infty$ gives to the result.
\end{proof}
Before stating the main result of this subsection, let us state a simple Lemma.
\begin{lem} \label{lem:flattening_conv}
 For any $\lambda\in\R$, the flatting map $fl_\lambda$ maps convergent forests to convergent words.
\end{lem}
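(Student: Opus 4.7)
The plan is to proceed by induction on the number of vertices $|F|$ of a convergent forest $F$, using the three defining relations of $fl_\lambda$ recalled just after Definition \ref{defn:flattening}.

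\textbf{Base case.} If $|F|=0$ then $F=\emptyset$ and $fl_\lambda(\emptyset)=\emptyset\in\calw_{\N^*}^{\rm conv}$ by definition of a convergent word.

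\textbf{Inductive step.} Assume the result holds for every convergent forest on at most $n$ vertices, and let $F$ be a convergent forest with $|F|=n+1$. Either $F=T$ is a single nonempty tree, or $F=F_1F_2$ with $F_1,F_2$ nonempty.

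In the first case, $T=B_+^\omega(\tilde F)$ for some (possibly empty) forest $\tilde F$, where $\omega=d(r)$ is the decoration of the root of $T$. Since $T$ is convergent we have $\omega\geq 2$. By the recursive definition,
\[ fl_\lambda(T)=(\omega)\sqcup fl_\lambda(\tilde F), \]
and every word appearing in this expression has $\omega\geq 2$ as its leftmost letter, hence $fl_\lambda(T)\in\calw_{\N^*}^{\rm conv}$. Note that this step requires no assumption on $\tilde F$; convergence of the leftmost letter depends only on $\omega$.

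In the second case, $F_1$ and $F_2$ are themselves convergent forests of strictly fewer vertices than $F$, so the induction hypothesis gives $fl_\lambda(F_1),fl_\lambda(F_2)\in\calw_{\N^*}^{\rm conv}$. Then
\[ fl_\lambda(F)=fl_\lambda(F_1)\shuffle_\lambda fl_\lambda(F_2), \]
and by Lemma \ref{lem:subalgebra_shuffle_lambda} the right-hand side lies in $\calw_{\N^*}^{\rm conv}$. This closes the induction.

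No real obstacle is expected: the argument is essentially bookkeeping, and the only nontrivial ingredient (that the $\lambda$-shuffle of two convergent words is again a sum of convergent words, using that $\omega+\omega'\geq 2$ when $\omega,\omega'\geq 2$) has already been packaged into Lemma \ref{lem:subalgebra_shuffle_lambda}.
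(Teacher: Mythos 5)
Your proof is correct and follows essentially the same route as the paper: induction on the number of vertices, with the tree case handled by noting that the root decoration $\omega\geq 2$ becomes the leftmost letter under $fl_\lambda(B_+^\omega(\tilde F))=(\omega)\sqcup fl_\lambda(\tilde F)$, and the concatenation case handled by Lemma \ref{lem:subalgebra_shuffle_lambda}. Nothing to add.
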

\begin{proof}
 One can perform an easy proof by induction on the number of vertices of the convergent forest. For the empty forest, we have 
 $fl_\lambda(\emptyset)\in\calw_{\N^*}^{\rm conv}$. For a nonempty convergent tree $T=B_+^{n\geq2}(F)$ for some forest $F$. Then 
 $fl_\lambda(T)=C_+^{n\geq2}(fl_\lambda(F))\in\calw_{\N^*}^{\rm conv}$. Finally for the case $F=F_1F_2$, we have that $F_1$ and $F_2$ are 
 convergent if $F$ is and therefore $fl_\lambda(F) = fl_\lambda(F_1)\shuffle_\lambda fl_\lambda(F_2)\in\calw_{\N^*}^{\rm conv}$ by the 
 induction hypothesis and Lemma \ref{lem:subalgebra_shuffle_lambda}.
\end{proof}
We can now relate these two constructions.
\begin{thm} \label{thm:main_result_stuffle}
 For any convergent forest $F$, the convergent  arborified zeta value $\zeta^T_\stuffle(F)$ (resp. $\zeta^{T,\star}_\stuffle(F)$) is a finite 
 linear combination of convergent  {multiple zeta} values $\zeta_\stuffle(w)$ (resp. $\zeta^{\star}_\stuffle(w)$) with rational coefficients and can be written as a finite 
 linear combination of  {multiple zeta} values with integer coefficients.
\end{thm}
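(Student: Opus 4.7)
The plan is to reduce the computation of $\calz_\lambda(F)$ to a finite $\Z$-linear combination of values $\calz_{\lambda,\calw}(w)$ on convergent words by pushing everything through the flattening map. The central observation is that the decoration map $\calr:n\mapsto(x\mapsto x^{-n})$ is a semigroup morphism from $(\N^*,+)$ to $(\calp^{*;*},\cdot)$, since $x^{-n}\cdot x^{-m}=x^{-(n+m)}=\calr(n+m)$. This is exactly what is needed to make $\calr^\sharp$ commute with flattening, because the only step of $fl_{\lambda'}$ that couples decorations from two subwords is the $\lambda'$-shuffle, and it does so via the product on the decoration set. A short induction on $|F|$ using the three defining relations of $fl_{\lambda'}$ should then yield
$$fl_{\lambda'}\circ\calr^\sharp=\calr_\calw^\sharp\circ fl_{\lambda'} \qquad \text{for every } \lambda'\in\R.$$

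Next I would invoke the Rota--Baxter property of $\frakS_\lambda$ restricted to $\N^*$, exactly as in the proof of Proposition \ref{prop:zeta_stuffle_mor}: $\frakS_{-1}$ is Rota--Baxter of weight $+1$ and $\frakS_{0}$ of weight $-1$ (Example \ref{ex:RBmap}). Setting $\lambda'=+1$ when $\lambda=-1$ and $\lambda'=-1$ when $\lambda=0$, Theorem \ref{thm:flattening}(\ref{thm:ii}) applied on the subalgebra of sequences and evaluated at any $N\in\N^*$ gives $\widehat{\frakS}_\lambda(G)(N)=\widehat{\frakS}_{\lambda,\calw}(fl_{\lambda'}(G))(N)$ for every $G\in\calf_{\calp^{*;*}}$. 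Specialising $G=\calr^\sharp(F)$ and using the commutation of the previous paragraph produces the pointwise identity $\calz_\lambda(F)(N)=\calz_{\lambda,\calw}(fl_{\lambda'}(F))(N)$ for every $F\in\calf_{\N^*}^{\rm conv}$ and every $N\in\N^*$.

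Now Proposition \ref{prop:finite_Q_sum_flattening}, applied with $\lambda'\in\{\pm1\}\subset\Z$, writes $fl_{\lambda'}(F)=\sum_i c_i w_i$ as a finite sum with $c_i\in\Z$, while Lemma \ref{lem:flattening_conv} ensures that each $w_i$ lies in $\calw_{\N^*}^{\rm conv}$. Proposition \ref{prop:order_calz}, Theorem \ref{thm:calz_words_order} and Lemma \ref{lem:existence_lim} guarantee that both $\calz_\lambda(F)$ and the $\calz_{\lambda,\calw}(w_i)$ are log-polyhomogeneous symbols of strictly negative order admitting a finite limit at infinity, so their continuous limit as $x\to\infty$ coincides with the limit along the integers. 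Passing to $N\to\infty$ in the above identity then yields
$$\zeta^T_\stuffle(F)=\sum_i c_i\,\zeta_\stuffle(w_i), \qquad \zeta^{T,\star}_\stuffle(F)=\sum_i c_i\,\zeta^{\star}_\stuffle(w_i),$$
with $c_i\in\Z$, which proves both claims at once.

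The only subtle point I anticipate is that Theorem \ref{thm:flattening} cannot be invoked on the full symbol algebra $\calp^{*;*}$, because $\frakS_\lambda$ is Rota--Baxter only after restriction to $\N^*$. This forces one to establish the factorisation at integer arguments and then to rely on the existence of limits at infinity of log-polyhomogeneous symbols of negative order in order to transfer the identity to the AZV/MZV level. Once this care is taken, the whole argument is driven by the commutation $fl_{\lambda'}\circ\calr^\sharp=\calr_\calw^\sharp\circ fl_{\lambda'}$, which is an immediate consequence of $\calr$ being a semigroup morphism, and by the integer-coefficient statement of Proposition \ref{prop:finite_Q_sum_flattening}.
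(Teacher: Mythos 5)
Your proof is correct and follows essentially the same route as the paper: restrict $\frakS_{-1}$ (resp.\ $\frakS_0$) to integer arguments where it is Rota--Baxter of weight $+1$ (resp.\ $-1$), invoke Theorem \ref{thm:flattening}~\ref{thm:ii} to factorise through words, use Lemma \ref{lem:flattening_conv} and Proposition \ref{prop:finite_Q_sum_flattening} for convergence and integrality of the coefficients, and pass to the limit. You are in fact slightly more careful than the paper on two points it glosses over --- the matching of the flattening weight to the Rota--Baxter weight ($fl_{+1}$ for $\zeta^T_\stuffle$, $fl_{-1}$ for $\zeta^{T,\star}_\stuffle$, consistent with diagram \ref{fig:flattening_zeta}) and the commutation $fl_{\lambda'}\circ\calr^\sharp=\calr^\sharp_\calw\circ fl_{\lambda'}$ coming from $\calr$ being a semigroup morphism --- both of which are genuinely needed and correctly justified.
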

\begin{proof}
 Let $F$ be any convergent forest.
 
 Once again, by Examples \ref{ex:RBmap}, \ref{ex:RB_sum} (resp. \ref{ex:RBmap}, \ref{ex:RB_sum_star}) we know that $\frakS_{-1}$ (resp. 
 $\frakS_0$), when restricted to $\N^*$, is a Rota-Baxter map of weight $+1$ (resp $-1$). Applying Theorem \ref{thm:flattening} \ref{thm:ii} we
 get that $N\to\widehat\frakS_{-1}(F)(N)$ (resp. $N\to\widehat\frakS_{0}(F)(N)$), when restricted to $\N^*$, factorises through words, i.e. 
 that $\widehat\frakS_\lambda(F)(N) = \widehat\frakS_{\lambda,\calw}\circ fl_\lambda(F)(N)$ for any $N$ in $\Z_{\geq|F|+1}$ and 
 $\lambda\in\{0,-1\}$.
 
 Then by Lemma \ref{lem:flattening_conv} we can take the limit $N\to\infty$ in both sides which gives the result. 
 We obtain a finite sum with integer coefficients thanks to Proposition \ref{prop:finite_Q_sum_flattening}.
\end{proof}
To summarize the main results of this Section, the definitions and relations between branched zetas and  {multiple zetas} can be summarized as the 
commutativity of diagram \ref{fig:MZV_AZV}.

 \begin{figure}[h!] 
  		\begin{center}
  			\begin{tikzpicture}[->,>=stealth',shorten >=1pt,auto,node distance=3cm,thick]
  			\tikzstyle{arrow}=[->]
  			
  			\node (1) {$\calw_{\N^*}^{\rm conv}$};
  			\node (2) [right of=1] {$\calw_{\calp^{*;*}}$};
  			\node (4) [below right of=2] {$\calp^{*;*}$};
  			\node (5) [below left of=4] {$\calf_{\calp^{*;*}}$};
  			\node (7) [left of=5] {$\calf_{\N^*}^{\rm conv}$};
  			\node (8) [right of=4] {$\R$};

  			\path
  			(1) edge node [below] {$\calr^\sharp_\calw$} (2)
  			(2) edge node [above right] {$\widehat\frakS_\calw$} (4)
  			(4) edge node [above] {$\lim_{+\infty}$} (8)
  			(7) edge node [above] {$\calr^\sharp$} (5)
  			(5) edge node [above left] {$\widehat\frakS$} (4);

  			\draw [right hook-latex] (1) -- node[left] {$\iota_{\N^*}$} (7);
  			\draw [right hook-latex] (2) -- node[left] {$\iota_{\calp^{*;*}}$} (5);
  			
  			\draw (1) to[bend left] node[above]{$\zeta_\stuffle,\zeta_\stuffle^\star$} (8);
  			\draw (7) to[bend right] node[below]{$\zeta^T_\stuffle,\zeta_\stuffle^{T;\star}$} (8);
  			\end{tikzpicture}
  			\caption{ {Multiple zetas} and arborified zetas.}\label{fig:MZV_AZV}
  		\end{center}
  	\end{figure}
 {Furthermore, the relationships between convergent ordinary zeta values, arborified convergent zeta values and the flattening map is illustrated in the commutative diagram \ref{fig:flattening_zeta}.
\begin{figure}[h!]
 \begin{center}
  \begin{tikzpicture}[->,>=stealth',shorten >=1pt,auto,node distance=3cm,thick]
  			\tikzstyle{arrow}=[->]
  			
  			\node (1) {$\calw_{\N^*}^{\rm conv}$};
  			\node (2) [right of=1] {$\R$};
  			\node (3) [below of=1] {$\calf_{\N^*}^{\rm conv}$};
  			
  			\path
  			(1) edge node [above] {$\zeta_\stuffle$} (2)
  			(3) edge node [left] {$fl_1$} (1)
  			(3) edge node [right] {$\zeta^T_\stuffle$} (2);
  \end{tikzpicture}
  \caption{Convergent zetas and flattening.} \label{fig:flattening_zeta}
 \end{center}
\end{figure}
The same diagram holds with $\zeta_\stuffle$ (resp. $fl_1$ and $\zeta^T_\stuffle$) replaced by $\zeta_\stuffle^\star$ (resp. $fl_{-1}$ and $\zeta_\stuffle^{T;\star}$). }

\begin{rk}
 Before moving on to shuffle arborified zeta values, let us mention that an integral representation of stuffle zeta values was proposed in \cite{M}, but it does not 
 preserves the structure of trees. Such an integral representation was also presented in \cite{Ze}.
\end{rk}

\section{Shuffle arborified zeta values} \label{section:shuffle}

\subsection{Chen integrals and arborification}

In \cite{Ch} iterated integrals are recursively defined. One way to define them is as a map $Ch:\calw_X\mapsto \cali(I)$; where $I=[a,b]$ is a
closed interval, $\cali(I)$ is the set of continuous, integrable functions over $I$ and $X=\{f_1,\cdots,f_N\}$ is a finite subset of $\cali(I)$.
In \cite{Ch} this recursive definition goes as follows:
\begin{defnprop} \label{defnprop:Chen_int}
 Let $I=[a,b]$ be a closed interval in $\R$ and $X=\{f_1,\cdots,f_N\}$ be a finite set with $f_i:I\mapsto\R$ smooth, continuous functions over 
 $I$. $Ch:\calw_X\mapsto \cali(I)$ is the linear map, whose action on the basis elements of $\calw_X$ is recursively defined by
 \begin{align*}
  Ch(\emptyset) & := \left(x\mapsto1~\forall x\in I\right)=:{\bf 1}, \\
  Ch((f_i)\sqcup w) & := \left(x\mapsto\int_a^x f_i(t)Ch(w)(t)dt~\forall x\in I\right)
 \end{align*} 
 for any $f_i$ in $X$ and $w$ in $\calw_X$.
\end{defnprop}
\begin{proof}
 In order to prove that this definition is consistent, one has to prove that $Ch(w)$ lies in $\cali(I)$ for any word $w$ in $\calw_X$. 
 We show by induction that it exists $M\in\R$ such that for any $x$ in $I$ and $w$ in $\calw_X$, one has
 \begin{equation*}
  |Ch(w)(x)| \leq \left(M|b-a|\right)^{|w|}.
 \end{equation*}
 This can easily be done by induction on the length of the word $w$. For a word of length $0$, it is trivially true. 
 Now, since each $f_i$ in $X$ is continuous over the closed interval $I$, $f_i$ is bounded by some constant $M_i$. Let 
 $M:=\max_{i=1,\cdots,n}M_i$. If $|Ch(w)(x)| \leq \left(M|b-a|\right)^{|w|}$ holds for every word $w$ of length $n\geq1$, then one has for any 
 $f_i$ in $X$
 \begin{equation*}
  |Ch((f_i)\sqcup w)(x)| \leq \int_a^x|f_i(t)Ch(w)(t)|dt \leq \left(M|b-a|\right)^{|w|+1}.
 \end{equation*}
 This shows that $|Ch(w)(x)| \leq \left(M|b-a|\right)^{|w|}$ for any $x$ in $I$ and $w$ in $\calw_X$. Continuity of $Ch(w)$ follows from the 
 standard theorems of integration theory.
\end{proof}
We denote by $Ch_X(I)$ the algebra (for the  {pointwise} product) freely generated by the image of $Ch$. It admits ${\bf 1}=Ch(\emptyset)$ as a unit.

This approach shows that Chen integrals are the elements of the image of a map $\widehat{\cali_a^b}_\calw$ going from $\calw_X$ to a subset of 
$\cali(I)$, where $\cali_a^b$ is the integration map defined by $\cali_a^b(f)(x):=\int_a^x f(t)dt$ for $x$ in $[a,b]$. Indeed, standard results of integration theory 
states that, since the interval $I$ is closed, the image of $\cali(I)$ under the map $\cali_a^b$ lies in $\cali(I)$. This suggest a natural 
generalisation of Chen integrals to arborified Chen integrals.

First, observe that if $X\subseteq Y$, then $\calf_X$ forms a subalgebra of $\calf_Y$ (since $\emptyset$ is an element of $\calf_X$ for any set 
$X$).
\begin{defn} \label{defn:arborified_chen_int}
 Let $I=[a,b]$ and $\cali(I)$ be as above. Let $\cali_a^b:\cali(I)\mapsto\cali(I)$ be 
 the integration map defined as above by $\cali_a^b(f)(x):=\int_a^x f(t)dt$. Let $X=\{f_1,\cdots,f_N\}$ is a finite subset of $\cali(I)$. 
 {\bf arborified Chen integrals} are elements of the image of $\widehat{\cali_a^b}:\calf_X\mapsto\cali(I)$.
\end{defn}
\begin{rk}
 One could recursively prove that $\widehat{\cali_a^b}$ is well-defined as in the case of words, that is that the image of $\widehat{\cali_a^b}$ 
 is indeed in $\cali(I)$. However by Definition \ref{defn:phi_hat} we already have this result since $\cali_a^b(\cali(I))\subseteq\cali(I)$.
\end{rk}
\begin{prop} \label{prop:chen_int_arbo_words}
 Any arborified Chen  {integral} is a finite sum of Chen iterated integrals with rational coefficients.
\end{prop}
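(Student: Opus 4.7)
The plan is to reduce the statement to the flattening theorem (Theorem 2.19) by identifying $Ch$ with the $W$-branched integration map $\widehat{\cali_a^b}_\calw$ and noting that $\cali_a^b$ is Rota-Baxter of weight $0$.

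First I would observe that the recursive definition of $Ch$ in Definition-Proposition \ref{defnprop:Chen_int} matches exactly the recursive characterization of the $W$-branched map given in the Remark following the definition of $\widehat\phi_\calw$: both send $\emptyset$ to the constant function ${\bf 1}$ and satisfy
\begin{equation*}
 Ch\bigl((f_i)\sqcup w\bigr)(x) = \int_a^x f_i(t)\,Ch(w)(t)\,dt = \cali_a^b\bigl(f_i\cdot Ch(w)\bigr)(x),
\end{equation*}
so that $Ch = \widehat{\cali_a^b}_\calw$ when we take $\Omega$ to be the commutative algebra (under pointwise product) generated by $X\cup\cali_a^b(\cali(I))$ inside $\cali(I)$.

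Next I would invoke Example \ref{ex:RBmap}.\ref{ex:RB_integration}, which tells us that the integration map $\cali_a^b$ is a Rota-Baxter operator of weight $\lambda=0$. Theorem \ref{thm:flattening} then applies with this particular $\lambda$, yielding the factorisation
\begin{equation*}
 \widehat{\cali_a^b} \;=\; \widehat{\cali_a^b}_\calw \circ fl_0 \;=\; Ch \circ fl_0
\end{equation*}
as maps $\calf_X\longrightarrow\cali(I)$. Hence for any forest $F\in\calf_X$, the arborified Chen integral $\widehat{\cali_a^b}(F)$ equals $Ch$ applied to the word $fl_0(F)$.

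Finally, I would appeal to Proposition \ref{prop:finite_Q_sum_flattening} (applied with $\lambda=0\in\Q$) to conclude that $fl_0(F)$ is a finite linear combination of finite words with rational coefficients; by linearity of $Ch$, $\widehat{\cali_a^b}(F)$ is then a finite linear combination of Chen iterated integrals with rational coefficients, proving the proposition. There is no real obstacle here: once the identification $Ch=\widehat{\cali_a^b}_\calw$ is made explicit, the statement is a direct corollary of Theorem \ref{thm:flattening} and Proposition \ref{prop:finite_Q_sum_flattening}. The only point that deserves a sentence of care is the choice of the ambient commutative algebra $\Omega\subseteq\cali(I)$ in which the branching takes place, in order to apply the universal-property machinery that underlies both $\widehat{\cali_a^b}$ and $\widehat{\cali_a^b}_\calw$.
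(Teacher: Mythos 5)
Your proposal is correct and follows essentially the same route as the paper: identify $Ch$ with $\widehat{\cali_a^b}_\calw$, use that $\cali_a^b$ is Rota-Baxter of weight $0$ to apply Theorem \ref{thm:flattening} and obtain $\widehat{\cali_a^b}=\widehat{\cali_a^b}_\calw\circ fl_0$, then conclude via Proposition \ref{prop:finite_Q_sum_flattening}. Your extra remark about choosing the ambient commutative algebra $\Omega\subseteq\cali(I)$ is a welcome precision that the paper leaves implicit.
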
 
\begin{proof}
 This follows from the observation above that Chen iterated integrals are elements of the image of $\widehat{\cali_a^b}_\calw$. Then by Example 
 \ref{ex:RBmap} \ref{ex:RB_integration} we can apply Theorem \ref{thm:flattening} to $\widehat{\cali_a^b}$ which gives 
 $\widehat{\cali_a^b}=\widehat{\cali_a^b}_\calw\circ fl_0$. For any finite forest $F$, $fl_0(F)$ is a finite sum of words with rational coefficients by 
 Proposition \ref{prop:finite_Q_sum_flattening}, thus 
 we obtain the result.
\end{proof}

\subsection{Arborified polylogarithms} 

We specialise the construction of the previous subsection to the case $X=\{\sigma_x,\sigma_y\}$ with $\sigma_x(t):=1/t$ and 
$\sigma_y(t):=1/(1-t)$. Furthermore the above construction is carried out on $I=[\epsilon,z]$ with $0<\epsilon<z<1$.

However arborified polylogarithms (resp.  arborified shuffle zeta values) should be defined (up to convergence issues) as a map 
$Li^T:\calf_{\{x,y\}}\mapsto\Omega$ (resp. $\zeta^T_\shuffle:\calf_{\{x,y\}}\mapsto\R$) for  {a suitable space $\Omega$ of functions}. In order 
to build such a map, we follow the same strategy as for stuffle zeta values. Let $\calr_{\{x,y\}}:\{x,y\}\mapsto\{\sigma_x,\sigma_y\}$ defined 
by $\calr_{\{x,y\}}(\varepsilon)=\sigma_\varepsilon$ for $\varepsilon$ in $\{x,y\}$. 
\begin{defn}
 A forest $F$ in $\calf_{\{x,y\}}$ is called {\bf semiconvergent} if each of its leaves and branching vertices are decorated by $y$ and {\bf convergent} if it is 
 semiconvergent and each of its roots are decorated by $x$. The linear span of semiconvergent forests is denoted by $\calf^{\rm semi}_{\{x,y\}}$ 
 and the linear span of convergent forests is denoted by $\calf^{\rm conv}_{\{x,y\}}$.
 
 Similarly, a word $w$ in $\calw_{\{x,y\}}$ is called {\bf semiconvergent} if it is empty or if it ends by $y$ and {\bf convergent} if it is empty or if it ends by $y$ and starts by $x$. 
 We write $\calw^{\rm semi}_{\{x,y\}}$ and $\calw^{\rm conv}_{\{x,y\}}$ the linear span of semiconvergent and convergent words respectively.
\end{defn}
\begin{lem}
 $\calf^{\rm semi}_{\{x,y\}}$ and $\calf^{\rm conv}_{\{x,y\}}$ are  {subalgebras} of $\calf_{\{x,y\}}$ for the concatenation of forests; $\calw^{\rm semi}_{\{x,y\}}$ and $\calw^{\rm conv}_{\{x,y\}}$
  are subalgebras of $\calw_{\{x,y\}}$ for the shuffle product $\shuffle$.
\end{lem}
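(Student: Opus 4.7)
For the forest cases, the proof is essentially direct from the structure of the concatenation product. I would first observe that the empty forest $\emptyset$ trivially satisfies both the semiconvergence and convergence conditions (there are no leaves, branching vertices or roots to check). Then for $F_1,F_2\in\calf_{\{x,y\}}$, the concatenation $F_1F_2$ is the disjoint union of the trees in $F_1$ and those in $F_2$, so
\begin{equation*}
  V(F_1F_2) = V(F_1)\sqcup V(F_2),
\end{equation*}
and the sets of leaves, branching vertices and roots of $F_1F_2$ are respectively the disjoint unions of those of $F_1$ and $F_2$. The decoration constraints defining semiconvergence (all leaves and branching vertices decorated by $y$) and convergence (semiconvergence plus all roots decorated by $x$) are therefore preserved under concatenation.

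For the word cases, the strategy is to show the following simple fact by induction on $|w|+|w'|$: for any non-empty words $w,w'\in\calw_{\{x,y\}}$, every word $u$ that appears with a non-zero coefficient in $w\shuffle w'$ satisfies that its first letter is the first letter of $w$ or of $w'$, and its last letter is the last letter of $w$ or of $w'$. The base case $|w|+|w'|\leq 1$ is immediate since then at least one of the words is empty and $\emptyset\shuffle w' = w'$. For the inductive step with $w=(\omega)\sqcup\tilde w$ and $w'=(\omega')\sqcup\tilde w'$, the recursive definition
\begin{equation*}
  w\shuffle w' = (\omega)\sqcup(\tilde w\shuffle w') + (\omega')\sqcup(w\shuffle\tilde w')
\end{equation*}
shows immediately that the first letter of any word in the expansion is either $\omega$ or $\omega'$. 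For the last letter, I would apply the induction hypothesis to $\tilde w\shuffle w'$ and $w\shuffle\tilde w'$, noting carefully the edge cases where one of $\tilde w,\tilde w'$ is empty (in which case the shuffle reduces to a single word whose last letter is the last letter of the other word).

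Given this observation, the subalgebra properties follow immediately: if $w$ and $w'$ both end in $y$, then every word in $w\shuffle w'$ ends in $y$, so $\calw^{\rm semi}_{\{x,y\}}$ is stable under $\shuffle$; similarly the combined constraint on first and last letter is preserved, giving stability of $\calw^{\rm conv}_{\{x,y\}}$. Since $\emptyset$ is both semiconvergent and convergent by definition, both sets contain the unit and are subalgebras. I do not expect any real obstacle here; the only mildly delicate point is keeping the induction hypothesis honest when the ``tail'' $\tilde w$ of a semiconvergent $w$ may have fewer letters but still ends in $y$ (or is empty), and when the ``tail'' of a convergent word loses the initial $x$ but is still semiconvergent, which is exactly what is needed for the last-letter part of the inductive claim.
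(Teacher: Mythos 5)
Your proposal is correct and follows essentially the same route as the paper: the forest case is immediate from the definition of concatenation, and the word case rests on the fact that every word occurring in $w\shuffle w'$ inherits its first and last letters from $w$ or $w'$ (which the paper simply asserts and you prove by induction on $|w|+|w'|$). No gaps.
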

\begin{proof}
 As before, the result trivially holds for forests. For words the result follows from the fact that for any two words $w$ and $w'$, then we can write $w\shuffle w'=\sum_i w_i$ and the first (resp. last) letter 
 of each $w_i$ is the first (resp. last) letter of $w$ or $w'$.
\end{proof}
In order to state an important result of this Section, let us recall the definition of multiple polylogarithms.
\begin{defn} \label{defn:multiple_polylogs}
 Let $w\in\calw_{\{x,y\}}^{\rm semi}$ be a word either empty or whose last letter is $y$. The {\bf single variable multiple polylogarithm} (shortened in multiple polylogarithm in what follows) attached to $w$ is defined by
 \begin{align*}
   Li_w(z):=\begin{cases}
             & 0 \quad {\rm for } \quad z=0, \\
             & \lim_{\epsilon\to0}\widehat{\cali_\epsilon^z}_\calw\left(\calr_{\{x,y\}}^{\sharp,\calw}(w)\right) \quad {\rm for } \quad z\in]0,1[.
            \end{cases}
% \lim_{\epsilon\to0}\widehat{\cali_\epsilon^z}_\calw\left(\calr_{\{x,y\}}^{\sharp,\calw}(w)\right).
  \end{align*}
  We write $Li:\calw_{\{x,y\}}^{\rm semi}\longrightarrow\mathcal{C}^\infty([0,1[)$ the map which, to such a word, associates the map $z\mapsto Li_w(z)$.
\end{defn}
The existence of the limit for semiconvergent words and the fact that $Li_w$ is a smooth map are well-known results of polylogarithms theory, see for example \cite{Br}.

\begin{defnprop} \label{defnprop:arbo_polylogs}
 For any $z\in]0,1[$ and semiconvergent $F$ the limit
 \begin{equation*}
  Li^T_F(z) := \lim_{\epsilon\to0}\widehat{\cali_\epsilon^z}\left(\calr_{\{x,y\}}^\sharp(F)\right)
 \end{equation*}
 exists. Setting $Li^T_F(z)=0$ we obtain a map 
 \begin{align*}
  Li^T_F:[0,1[ & \mapsto\R \\
           z & \mapsto Li^T_F(z)
 \end{align*}
 is called the {\bf arborified polylogarithm} associated to the semiconvergent forest $F$. The {\bf arborified polylogarithm map} is defined 
 by its action $Li^T:F\mapsto Li^T_F$ on any semiconvergent forest $F$.
\end{defnprop}
\begin{proof}
 One needs to prove the existence of the limit for any $z\in[0,1[$. It follows from Example \ref{ex:RBmap}, \ref{ex:RB_integration} that we can 
 apply Theorem \ref{thm:flattening} with $\lambda=0$. Furthermore, one easily shows by induction that the image of a semiconvergent forest under 
 the map $fl_0$ is a finite sum of words, each ending with $y$. As stated above, it is a well-known fact (see for example \cite{Br}) that for such a word $w$,
 the limit
 \begin{equation*}
  \lim_{\epsilon\to0}\widehat{\cali_\epsilon^z}_\calw\left(\calr_{\{x,y\}}^{\sharp,\calw}(w)\right)
 \end{equation*}
 exists. The result then follows.
\end{proof}
\begin{lem}\label{lem:flattening_conv_xy}
 The flattening map of weight $0$ maps $\calf_{\{x,y\}}^{\rm semi}$ (resp. $\calf_{\{x,y\}}^{\rm conv}$) to $\calw_{\{x,y\}}^{\rm semi}$ (resp. $\calw_{\{x,y\}}^{\rm conv}$).
\end{lem}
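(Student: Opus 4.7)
The plan is to prove both statements simultaneously by induction on the number of vertices of $F$, exploiting the recursive relations
\begin{align*}
 fl_0(\emptyset) & = \emptyset, \\
 fl_0(F_1F_2) & = fl_0(F_1)\shuffle fl_0(F_2), \\
 fl_0(B_+^\omega(\tilde F)) & = (\omega)\sqcup fl_0(\tilde F),
\end{align*}
combined with the fact, established in the preceding lemma, that $\calw_{\{x,y\}}^{\rm semi}$ and $\calw_{\{x,y\}}^{\rm conv}$ are subalgebras of $\calw_{\{x,y\}}$ for the shuffle product $\shuffle=\shuffle_0$.

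The base case $F=\emptyset$ is immediate. For the concatenation step $F=F_1F_2$, a direct reading of the definitions shows that $F$ is semiconvergent (resp. convergent) if and only if $F_1$ and $F_2$ both are; hence the induction hypothesis, followed by the subalgebra property quoted above, disposes of this case.

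The grafting step $F=T=B_+^\omega(\tilde F)$ is the substantive case. The key structural remark is that every leaf (resp. branching vertex) of $\tilde F$ is also a leaf (resp. branching vertex) of $T$, since adjoining a new minimum element neither creates new direct successors above them nor removes maximality. Consequently, whenever $T$ is semiconvergent, $\tilde F$ is semiconvergent. For the semiconvergent statement: if $\tilde F=\emptyset$ then $T$ is a single vertex, which is a leaf, forcing $\omega=y$ and giving $fl_0(T)=(y)\in\calw_{\{x,y\}}^{\rm semi}$; if $\tilde F$ is nonempty, then by induction $fl_0(\tilde F)$ is a finite sum of nonempty words each ending with $y$, and left-concatenation by $(\omega)$ preserves the final letter. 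For the convergent statement: $T$ convergent forces $\omega=x$ (root decoration), and the single-vertex subcase is impossible since the unique vertex would be both root and leaf, requiring $\omega=x$ and $\omega=y$ simultaneously; thus $\tilde F$ is nonempty and semiconvergent, so the semiconvergent case already proved yields that $fl_0(\tilde F)$ is a sum of nonempty words ending with $y$, and $(x)\sqcup fl_0(\tilde F)$ is a sum of words starting with $x$ and ending with $y$.

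The only delicate point is the combinatorial verification that grafting a new root does not alter the leaf/branching status of the vertices of $\tilde F$, together with the tiny edge case of the one-vertex tree in the convergent setting; both are elementary once formulated explicitly.
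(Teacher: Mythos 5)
Your proof is correct and takes essentially the approach the paper intends: the paper's own proof is a one-line reference to the induction used for Lemma \ref{lem:flattening_conv} (base case, grafting case via $C_+$, concatenation case via the subalgebra property of $\shuffle_\lambda$ on convergent words), and you carry out exactly that induction. The extra details you supply --- that leaves and branching vertices of $\tilde F$ persist in $B_+^\omega(\tilde F)$ so semiconvergence passes to $\tilde F$, that for $\lambda=0$ the words in $fl_0(\tilde F)$ are nonempty when $\tilde F$ is, and that the one-vertex convergent tree is vacuous --- are precisely the points the paper elides with ``the proof is similar,'' and they are all handled correctly.
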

\begin{proof}
 The proof is similar to the proof of Lemma \ref{lem:flattening_conv}.
\end{proof}

\begin{thm} \label{thm:arborified_polylogs}
 For any semiconvergent forest $F$, the arborified polylogarithm associated to $F$ enjoys the following properties
 \begin{enumerate}
  \item it is a finite sum of multiple polylogarithms with rational coefficients that can be written as a finite 
 linear combination of multiple polylogarithms with integer coefficients;
  \item it is a smooth map on $[0,1[$;
  \item The arborified polylogarithm map $Li^T:F\mapsto Li^T_F$ is a algebra morphism for the concatenation of trees and the  {pointwise} product of functions.
 \end{enumerate}
\end{thm}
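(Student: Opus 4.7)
The plan is to derive all three items from one observation: the integration map $\cali_\epsilon^z$ is a weight-$0$ Rota-Baxter operator (Example~\ref{ex:RBmap}~\ref{ex:RB_integration}), so Theorem~\ref{thm:flattening} applies with $\lambda=0$ and gives the factorisation
\begin{equation*}
 \widehat{\cali_\epsilon^z}\circ\calr^\sharp_{\{x,y\}} \;=\; \widehat{\cali_\epsilon^z}_\calw\circ fl_0\circ\calr^\sharp_{\{x,y\}}
\end{equation*}
on all of $\calf_{\{x,y\}}$. A quick check from Definitions~\ref{defn:phi_sharp} and~\ref{defn:flattening} shows that $fl_0\circ\calr^\sharp_{\{x,y\}}=\calr^{\sharp,\calw}_{\{x,y\}}\circ fl_0$, since both sides are algebra morphisms agreeing on ladders. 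This reduces each step for forests to a statement already known for words.

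For item~(i), I take a semiconvergent forest $F$. By Lemma~\ref{lem:flattening_conv_xy}, $fl_0(F)$ is a linear combination of semiconvergent words, and by Proposition~\ref{prop:finite_Q_sum_flattening} (applied with the integer weight $\lambda=0$) the coefficients can be chosen in $\Z$. Writing $fl_0(F)=\sum_i c_i w_i$, the factorisation above yields
\begin{equation*}
 \widehat{\cali_\epsilon^z}\bigl(\calr^\sharp_{\{x,y\}}(F)\bigr) \;=\; \sum_i c_i\,\widehat{\cali_\epsilon^z}_\calw\bigl(\calr^{\sharp,\calw}_{\{x,y\}}(w_i)\bigr),
\end{equation*}
a \emph{finite} sum, so one may pass to the limit $\epsilon\to 0$ term by term; each term converges to $Li_{w_i}(z)$ by Definition~\ref{defn:multiple_polylogs}, giving $Li^T_F=\sum_i c_i\,Li_{w_i}$ with integer (hence rational) coefficients.

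Item~(ii) then follows immediately: a finite $\Z$-linear combination of maps smooth on $[0,1[$ is smooth on $[0,1[$, and smoothness of each $Li_{w_i}$ is the classical fact recalled after Definition~\ref{defn:multiple_polylogs}. For item~(iii), I use that $\calr^\sharp_{\{x,y\}}$ is an algebra morphism by construction (Definition~\ref{defn:phi_sharp}) and that $\widehat{\cali_\epsilon^z}$ is an algebra morphism by Definition~\ref{defn:phi_hat} (see~\eqref{eq:prod_hat_phi}); their composition is therefore multiplicative for the concatenation product and the pointwise product of functions. Thus for any two semiconvergent forests $F_1,F_2$,
\begin{equation*}
 \widehat{\cali_\epsilon^z}\bigl(\calr^\sharp_{\{x,y\}}(F_1F_2)\bigr)(z) \;=\; \widehat{\cali_\epsilon^z}\bigl(\calr^\sharp_{\{x,y\}}(F_1)\bigr)(z)\cdot\widehat{\cali_\epsilon^z}\bigl(\calr^\sharp_{\{x,y\}}(F_2)\bigr)(z),
\end{equation*}
and passing to the limit $\epsilon\to 0$ (legitimate by item~(i)) delivers $Li^T_{F_1F_2}=Li^T_{F_1}\cdot Li^T_{F_2}$.

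The only delicate point is the interchange of limit and finite sum used in item~(i); since the decomposition $fl_0(F)=\sum_i c_i w_i$ has finitely many terms and each individual limit $\lim_{\epsilon\to 0}\widehat{\cali_\epsilon^z}_\calw(\calr^{\sharp,\calw}_{\{x,y\}}(w_i))$ exists by the classical semiconvergence criterion, there is no analytic subtlety beyond what Definition-Proposition~\ref{defnprop:arbo_polylogs} already guarantees. Everything else is a direct unpacking of the universal-property definitions, so I expect no further obstacle.
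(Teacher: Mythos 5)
Your proposal is correct and follows essentially the same route as the paper: the weight-$0$ Rota--Baxter property of $\cali_\epsilon^z$ plus Theorem \ref{thm:flattening} gives the factorisation through words, Lemma \ref{lem:flattening_conv_xy} and Proposition \ref{prop:finite_Q_sum_flattening} give a finite integer combination of semiconvergent words whose limits exist term by term, and item \emph{(iii)} comes from composing algebra morphisms and passing to the limit. The only addition is that you make explicit the commutation $fl_0\circ\calr^\sharp_{\{x,y\}}=\calr^{\sharp,\calw}_{\{x,y\}}\circ fl_0$, which the paper uses implicitly; this is a harmless and indeed welcome clarification.
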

\begin{proof}
 \begin{enumerate}
  \item The proof of this result is the same as the proof of Theorem \ref{thm:main_result_stuffle}, using Example \ref{ex:RBmap}, \ref{ex:RB_integration}. The limits in the definition of $Li_w$ are well-defined 
  by Lemma \ref{lem:flattening_conv_xy}.
  \item The second point follows from the first, since multiple polylogarithms are smooth maps of $[0,1[$.
  \item This follows from the fact that $Li^T$ is the composition of $\widehat{\cali_\epsilon^z}_\calw$ and $\calr_{\{x,y\}}^{\sharp,\calw}$, which are both algebra morphisms.
  Furthermore, if $FF'$ is a semiconvergent forest, then $F$ and $F'$ are two semiconvergent forests. Then $\lim_{\epsilon\to0}\widehat{\cali_\epsilon^z}\left(\calr_{\{x,y\}}^\sharp(F)\right)$
  and $\lim_{\epsilon\to0}\widehat{\cali_\epsilon^z}\left(\calr_{\{x,y\}}^\sharp(F')\right)$ exist and their product is equal to 
  \begin{equation*}
   \lim_{\epsilon\to0}\left[\widehat{\cali_\epsilon^z}\left(\calr_{\{x,y\}}^\sharp(F)\right)\widehat{\cali_\epsilon^z}\left(\calr_{\{x,y\}}^\sharp(F)\right)\right]
  \end{equation*}
  Thus the limit in the definition of $Li^T$ is also an algebra morphism and $Li^T$ is an algebra morphism as stated.
 \end{enumerate}

\end{proof}
As in the case of the stuffle branched zeta values, one can use this framework to provide a new proof that multiple polylogarithms are algebra morphisms for the shuffle product.
\begin{prop} \label{prop:polylogs_shuffle_mor}
 The map $Li:\calw_{\{x,y\}}^{\rm semi}\mapsto\mathcal{C}^{\infty}([0,1],\R)$ is an algebra morphism for the shuffle product.
\end{prop}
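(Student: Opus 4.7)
The plan is to mimic the Rota-Baxter argument already used in the paper for $\zeta_\stuffle$ (Proposition \ref{prop:zeta_stuffle_mor}), specialised to the integration operator, and then pass to the limit $\epsilon\to 0$. The key observation is that $Li_w(z)$ is (for $z\in{]0,1[}$) essentially $\widehat{\cali_\epsilon^z}_\calw\circ\calr_{\{x,y\}}^{\sharp,\calw}(w)$ evaluated at the limit $\epsilon\to 0$, so it suffices to show that each factor in this composition is a shuffle morphism at fixed $\epsilon$, and that taking limits preserves the identity.

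First, I would note that $\calr_{\{x,y\}}^{\sharp,\calw}$ is an algebra morphism for $\shuffle$. Indeed, by its recursive definition, it acts on a word $(\omega_1\cdots\omega_n)$ by replacing each letter $\omega_i$ with $\calr_{\{x,y\}}(\omega_i)=\sigma_{\omega_i}$, which clearly commutes with $\shuffle$ since $\shuffle$ only depends on the position of letters, not on their identity. Second, by Example \ref{ex:RBmap}\,(\ref{ex:RB_integration}), the integration map $\cali_\epsilon^z$ is a Rota-Baxter operator of weight $0$ on a suitable algebra of locally integrable functions containing $\sigma_x$ and $\sigma_y$ over $[\epsilon,z]$. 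Applying Theorem \ref{thm:flattening}\,(\ref{thm:iii}) with $\lambda=0$ (so that $\shuffle_0=\shuffle$), we conclude that $\widehat{\cali_\epsilon^z}_\calw$ is an algebra morphism for $\shuffle$. Composing these two morphisms, for any $w,w'\in\calw_{\{x,y\}}^{\rm semi}$ and any $\epsilon\in{]0,z[}$,
\begin{equation*}
\widehat{\cali_\epsilon^z}_\calw\!\left(\calr_{\{x,y\}}^{\sharp,\calw}(w\shuffle w')\right)=\widehat{\cali_\epsilon^z}_\calw\!\left(\calr_{\{x,y\}}^{\sharp,\calw}(w)\right)\cdot\widehat{\cali_\epsilon^z}_\calw\!\left(\calr_{\{x,y\}}^{\sharp,\calw}(w')\right).
\end{equation*}

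Finally, I would take $\epsilon\to 0$ on both sides. Since $\calw_{\{x,y\}}^{\rm semi}$ is a subalgebra of $\calw_{\{x,y\}}$ for $\shuffle$ (as already noted in the paper, each summand of $w\shuffle w'$ ends in $y$ whenever $w$ and $w'$ do), the word $w\shuffle w'$ is a $\Z$-linear combination of semiconvergent words, so by Definition \ref{defn:multiple_polylogs} and the convergence recalled there (see also \cite{Br}), the limit of the left-hand side exists and equals $Li_{w\shuffle w'}(z)$. On the right, each factor admits a finite limit, namely $Li_w(z)$ and $Li_{w'}(z)$ respectively; by continuity of multiplication, the product of the limits is the limit of the product. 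This yields $Li_{w\shuffle w'}(z)=Li_w(z)\,Li_{w'}(z)$ for every $z\in{]0,1[}$, and extending by linearity and by the convention $Li_w(0)=0$ on nonempty $w$ completes the identification as an algebra morphism to $\mathcal{C}^\infty([0,1[,\R)$.

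There is no real obstacle here; the only subtle point is checking that the interchange of limit and product is justified, which is immediate because both limits exist in $\R$. Everything else is a direct instantiation of Theorem \ref{thm:flattening} in the setting of the Rota-Baxter integration operator.
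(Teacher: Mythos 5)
Your proof is correct and follows essentially the same route as the paper, which simply invokes the argument of Proposition \ref{prop:zeta_stuffle_mor} with the integration operator (a Rota--Baxter map of weight $0$, Example \ref{ex:RBmap}) in place of the summation operator, i.e.\ Theorem \ref{thm:flattening}~(\ref{thm:iii}) with $\lambda=0$ followed by passage to the limit. Your write-up merely makes explicit two points the paper leaves implicit: that $\calr_{\{x,y\}}^{\sharp,\calw}$ commutes with $\shuffle$, and that the limit $\epsilon\to0$ can be interchanged with the product because semiconvergent words form a shuffle subalgebra.
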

\begin{proof}
 The proof follows the same steps as the proof of Proposition \ref{prop:zeta_stuffle_mor}.
\end{proof}

\subsection{Arborified zeta values as integrals}

It is well-known (see for example \cite{Br} or \cite{Wa}) that for a convergent word $w$ 
the limit $\lim_{z\to1}Li_w(z)$ exists. This allows the following definition.
\begin{defn} \label{defn:shuffle_MZVs}
 Let $w\in\calw_{\{x,y\}}$ be a word starting with $x$ and ending with $y$, then the {\bf shuffle multiple zeta value} associated to $w$ is the real number
 \begin{equation*}
  \zeta_\shuffle(w):=\lim_{z\to1}Li_w(z).
 \end{equation*}
 We write $\zeta_\shuffle$ the map defined by
 \begin{align*}
  \zeta_\shuffle:\{\emptyset\}\bigcup\left((x)\sqcup\calw_{\{x,y\}}\sqcup(y)\right) \longrightarrow \R & \\
  w \longmapsto \zeta_\shuffle(w & ).
 \end{align*}
\end{defn}
This definition can easily be generalised to trees, thanks to the following Lemma.
\begin{lem} \label{lem:lim_z_1_forests}
 For any convergent forest $F\in\calf_{\{x,y\}}^{\rm conv}$, the limit $\lim_{z\to1}Li^T_F(z)$ exists.
\end{lem}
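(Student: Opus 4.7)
The plan is to reduce the statement to the known existence of $\lim_{z\to1}Li_w(z)$ for convergent words $w$, using the fact that arborified polylogarithms decompose as finite linear combinations of multiple polylogarithms.

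First I would apply Theorem \ref{thm:arborified_polylogs}(i) to the forest $F$: since $F\in\calf_{\{x,y\}}^{\rm conv}$ is in particular semiconvergent, $Li^T_F$ can be written as a finite linear combination with rational (indeed integer) coefficients of multiple polylogarithms $Li_w$, where the words $w$ arise from $fl_0(F)$. Concretely, if $fl_0(F)=\sum_i c_i\,w_i$ with $c_i\in\Q$ and $w_i\in\calw_{\{x,y\}}$, then for every $z\in[0,1[$ one has $Li^T_F(z)=\sum_i c_i\,Li_{w_i}(z)$, a finite sum.

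Next I would invoke Lemma \ref{lem:flattening_conv_xy}: because $F$ is convergent (and not merely semiconvergent), $fl_0(F)$ lies in $\calw_{\{x,y\}}^{\rm conv}$, so each $w_i$ in the decomposition above is a convergent word, i.e.\ either the empty word or starting with $x$ and ending with $y$. This is the key point where the hypothesis that $F$ is convergent (rather than just semiconvergent) is used.

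Finally, for every such convergent word $w_i$ the limit $\lim_{z\to1}Li_{w_i}(z)$ exists by the classical theory of multiple polylogarithms (as recalled at the beginning of Section~\ref{section:shuffle} and in the references \cite{Br,Wa}). Since the decomposition of $Li^T_F$ as a linear combination of such $Li_{w_i}$ is finite, the limit $\lim_{z\to1}Li^T_F(z)$ exists as the corresponding finite linear combination of these known limits. I do not anticipate any substantial obstacle: the proof is essentially a transport of the known convergence result for words to the tree setting via the flattening map, with Lemma \ref{lem:flattening_conv_xy} doing the crucial combinatorial bookkeeping.
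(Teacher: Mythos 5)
Your proposal is correct and follows essentially the same route as the paper: decompose $Li^T_F$ via Theorem \ref{thm:arborified_polylogs} into a finite rational combination of multiple polylogarithms indexed by the words in $fl_0(F)$, check that these words are convergent, and conclude from the classical existence of $\lim_{z\to1}Li_w(z)$ for convergent words. The only cosmetic difference is that you cite Lemma \ref{lem:flattening_conv_xy} for the convergence of the words in $fl_0(F)$, whereas the paper re-derives this fact inline by an ad absurdum argument on roots and leaves; both are valid.
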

\begin{proof}
 Let $F\in\calf_{\{x,y\}}^{\rm conv}$. If $F=\emptyset$, then $fl_0(F)=\emptyset$ and the result trivially holds. Otherwise $fl_0(F)\in(x)\sqcup\calw_{\{x,y\}}\sqcup(y)$. Indeed, we can write $fl_0(F)=\sum_{i\in I}w_i$ for some finite set $I$. Then each $w_i$ has the decoration of a root of $F$ (thus a $x$) 
 as its first letter; and the decoration of a leaf of $F$ (thus a $y$) as its last letter. This is shown ad absurdum: if we have $w_i=(y)\sqcup\tilde w$, then a vertex decorated by $y$ was not above all roots of $F$, since 
 every roots of $F$ is decorated by $x$. This is a contradiction. The same argument shows that no $w_i$ cannot end with an $x$. The result then follows from Theorem \ref{thm:arborified_polylogs} and the observation above that 
 $\lim_{z\to1}Li_w(z)$ exists for any 
 $w\in (x)\sqcup\calw_{\{x,y\}}\sqcup(y)$.
\end{proof}
This allows the following definition.
\begin{defn} \label{defn:shuffle_AZVs}
 For any convergent forest $F\in\calf_{\{x,y\}}^{\rm conv}$ the  {corresponding} {\bf shuffle arborified zeta values} is defined as
 \begin{equation*}
  \zeta_\shuffle^T(F) := \lim_{z\to1}Li_F^T(z).
 \end{equation*}
 We write $\zeta_\shuffle^T$ the map defined by
 \begin{align*}
  \zeta_\shuffle^T:\calf_{\{x,y\}}^{\rm conv} \longrightarrow & \R \\
  F \longmapsto \zeta_\shuffle^T&(F).
 \end{align*}
\end{defn}
Shuffle arborified zeta values enjoy the following properties.
\begin{thm} \label{thm:main_result_shuffle}
 For any convergent forest $F\in\calf_{\{x,y\}}^{\rm conv}$,  {the shuffle arborified zeta values} $\zeta_\shuffle^T(F)$ is a finite sum of  {multiple zeta} values with rational coefficients that can be written as a finite 
 sum of  {multiple zeta} values with integer coefficients. Furthermore the map $\zeta_\shuffle^T:\calf_{\{x,y\}}^{\rm conv} \longrightarrow \R$
 is an algebra morphism for the concatenation product of trees.
\end{thm}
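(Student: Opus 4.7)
The plan is to reduce both assertions of the theorem to results already proved for the arborified polylogarithm $Li^T$, so that essentially no new analysis is required: the statement follows from taking the limit $z\to 1$ in Theorem \ref{thm:arborified_polylogs}.

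First I would handle the statement about the finite sum of multiple zeta values. Let $F\in\calf_{\{x,y\}}^{\rm conv}$. Since $\calf_{\{x,y\}}^{\rm conv}\subseteq\calf_{\{x,y\}}^{\rm semi}$, Theorem \ref{thm:arborified_polylogs} tells us that $Li^T_F$ is a finite $\Z$-linear combination of multiple polylogarithms. Tracing through the proof of that theorem, this decomposition arises from the factorisation $Li^T = Li\circ fl_0$, itself a consequence of Theorem \ref{thm:flattening} applied to the Rota-Baxter operator $\cali_\epsilon^z$ of weight $0$ (Example \ref{ex:RBmap}, \ref{ex:RB_integration}), together with Proposition \ref{prop:finite_Q_sum_flattening}. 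Hence we may write
\begin{equation*}
 Li^T_F(z) \;=\; \sum_{i\in I} c_i\, Li_{w_i}(z), \qquad c_i\in\Z,\ w_i\in\calw_{\{x,y\}},
\end{equation*}
with $I$ finite. Since $F$ is convergent, Lemma \ref{lem:flattening_conv_xy} ensures that each $w_i$ is a convergent word, so each limit $\lim_{z\to 1}Li_{w_i}(z)=\zeta_\shuffle(w_i)$ exists by Definition \ref{defn:shuffle_MZVs}. Combined with Lemma \ref{lem:lim_z_1_forests}, which guarantees the existence of $\lim_{z\to 1} Li^T_F(z)=\zeta^T_\shuffle(F)$, passing to the limit yields
\begin{equation*}
 \zeta^T_\shuffle(F) \;=\; \sum_{i\in I} c_i\, \zeta_\shuffle(w_i),
\end{equation*}
a finite $\Z$-linear combination of shuffle MZVs, a fortiori a $\Q$-linear combination.

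Second, for the algebra morphism property, let $F,F'\in\calf_{\{x,y\}}^{\rm conv}$. The fact that the product $FF'$ is again convergent is immediate from the definition of convergent forests, so $\zeta^T_\shuffle(FF')$ is well-defined. By Theorem \ref{thm:arborified_polylogs}, part (iii), the map $Li^T$ is an algebra morphism from $(\calf_{\{x,y\}}^{\rm semi},\cdot)$ to $(\mathcal{C}^\infty([0,1[),\cdot)$ for the pointwise product, so that for every $z\in[0,1[$
\begin{equation*}
 Li^T_{FF'}(z) \;=\; Li^T_F(z)\, Li^T_{F'}(z).
\end{equation*}
Both factors on the right admit finite limits as $z\to 1^-$ by Lemma \ref{lem:lim_z_1_forests}, hence so does the product, and passing to the limit gives $\zeta^T_\shuffle(FF')=\zeta^T_\shuffle(F)\,\zeta^T_\shuffle(F')$. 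The empty forest maps to the constant function $1$, so the unit is preserved as well.

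There is no real obstacle to this argument: all the heavy lifting (the interplay between $\widehat{\cali_\epsilon^z}$ and the flattening map, the smoothness on $[0,1[$, and the algebra morphism property at the polylogarithm level) has already been carried out in the preceding subsection. The only points requiring care are ensuring that convergent forests flatten to convergent words so that every $Li_{w_i}$ actually admits a limit at $1$ (Lemma \ref{lem:flattening_conv_xy}), and that the stability of $\calf_{\{x,y\}}^{\rm conv}$ under concatenation allows the multiplicativity statement to make sense; both are routine.
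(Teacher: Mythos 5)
Your proof is correct and follows essentially the same route as the paper, which simply invokes Theorem \ref{thm:arborified_polylogs} together with Lemma \ref{lem:lim_z_1_forests} to take the limit $z\to1$; you have merely spelled out the details (the factorisation through $fl_0$, the convergence of the flattened words via Lemma \ref{lem:flattening_conv_xy}, and the passage to the limit in the product). No gaps.
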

\begin{proof}
 This Theorem is a consequence of Theorem \ref{thm:arborified_polylogs} applied to convergent forests, for which one can take the limit $z\to1$ according to Lemma \ref{lem:lim_z_1_forests}.
\end{proof}
\begin{rk}
 One could prove this theorem along the steps of the proof of Theorem \ref{thm:main_result_stuffle}. This is true in many occurrences throughout this section. Hence Sections \ref{section:stuffle} and 
 \ref{section:shuffle} present two different (however equivalent) ways of building branched objects.
\end{rk}
We conclude this section by pointing out that, as a consequence of our previous results, we also have shown that  {multiple zetas} as iterated integrals are algebra morphism for the shuffle product.
\begin{prop} \label{prop:shuffle_zeta_alg_mor_shuffle}
 The map $\zeta_\shuffle:\{\emptyset\}\bigcup\left((x)\sqcup\calw_{\{x,y\}}\sqcup(y)\right) \longrightarrow \R$ is an algebra morphism for the shuffle product.
\end{prop}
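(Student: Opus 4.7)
The strategy is to upgrade the identity established at the level of multiple polylogarithms in Proposition \ref{prop:polylogs_shuffle_mor}, namely $Li_{w\shuffle w'}(z)=Li_w(z)\,Li_{w'}(z)$ for every $z\in[0,1[$ and every pair of semiconvergent words $w,w'$, by passing to the limit $z\to 1$. This mirrors the proof of Proposition \ref{prop:zeta_stuffle_mor}, where the Rota-Baxter identity holding at finite $N$ was transferred to $\zeta_\stuffle$ via the limit $N\to\infty$; here the role of the finite-$N$ morphism is played by Proposition \ref{prop:polylogs_shuffle_mor}, and the limit to be taken is $z\to 1$.

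The key preliminary step is to verify that the linear span of $\{\emptyset\}\cup\bigl((x)\sqcup\calw_{\{x,y\}}\sqcup(y)\bigr)$ is stable under $\shuffle$. For two nonempty words $w,w'$ each starting with $x$ and ending with $y$, the recursive definition of the shuffle product (Definition \ref{defn:shuffle_prod}) implies that every word appearing in $w\shuffle w'$ has its first letter equal to the first letter of $w$ or $w'$ (both $x$) and its last letter equal to the last letter of $w$ or $w'$ (both $y$); this is exactly the first-letter/last-letter argument already used in Lemma \ref{lem:lim_z_1_forests}.

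With this stability in hand, the conclusion follows at once. For any convergent $w,w'$, the right-hand side $Li_w(z)Li_{w'}(z)$ converges to $\zeta_\shuffle(w)\zeta_\shuffle(w')$ as $z\to 1$ by Definition \ref{defn:shuffle_MZVs}, while the left-hand side $Li_{w\shuffle w'}(z)$ is, by the stability step above, a finite $\R$-linear combination of polylogarithms $Li_u$ indexed by convergent words $u$, each of which admits a finite limit at $1$; passing to the limit termwise on the left yields $\zeta_\shuffle(w\shuffle w')=\zeta_\shuffle(w)\zeta_\shuffle(w')$. The edge case in which $w$ or $w'$ equals $\emptyset$ is trivial, since $Li_\emptyset\equiv 1$ gives $\zeta_\shuffle(\emptyset)=1$ and $\emptyset\shuffle w=w$. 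There is no genuine analytic obstacle: Proposition \ref{prop:polylogs_shuffle_mor} delivers the required identity as an honest equality of functions on $[0,1[$, and the only thing one must actually check is the combinatorial stability of convergent words under $\shuffle$.
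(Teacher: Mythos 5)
Your proof is correct and follows essentially the same route as the paper's: the paper also deduces the result by applying Proposition \ref{prop:polylogs_shuffle_mor} to words in $\{\emptyset\}\bigcup\left((x)\sqcup\calw_{\{x,y\}}\sqcup(y)\right)$ and taking the limit $z\to1$. Your explicit verification that this set is stable under $\shuffle$ (via the first-letter/last-letter argument) is a detail the paper leaves implicit, but it is the right justification for passing to the limit on the left-hand side.
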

\begin{proof}
 The proof follows by using Proposition \ref{prop:polylogs_shuffle_mor} on words in $\{\emptyset\}\bigcup\left((x)\sqcup\calw_{\{x,y\}}\sqcup(y)\right)$ and taking the limit $z\to1$.
\end{proof}

\section{Shuffles and arborified zetas} \label{section:shuffle_tree}

\subsection{Shuffles on trees}

We have seen that Theorems \ref{thm:main_result_stuffle} and \ref{thm:main_result_shuffle}, together with the fact that the maps $\zeta^T_\stuffle$ and 
$\zeta^T_\shuffle$ are algebra morphisms for the concatenation product of trees, allow to prove that the maps $\zeta_\stuffle$ and $\zeta_\shuffle$ are morphisms for the stuffle and shuffle product respectively. One 
can therefore interpret these theorems as the generalisation to trees of the stuffle and shuffle products. However, this interpretation is not entirely satisfactory as the flattening maps \eqref{defn:flattening} appearing in Theorems
\ref{thm:main_result_stuffle} and \ref{thm:main_result_shuffle} obfuscates the tree structure.

 {Another issue with seeing the concatenation product as the generalisation to trees of the shuffle and stuffle products is that it does not impose relations of the AZVs. This is due to the fact that the AZV 
associated to a non connected forest $F_1F_2$ can be \emph{defined} as the product of the AZVs associated to $F_1$ and $F_2$ (see Equation \eqref{eq:prod_hat_phi}). On the contrary, the shuffle and stuffle product 
do impose relation among MZVs.}

Hence, in order to rightfully claim to have generalised the shuffle and stuffle products to trees, one should find suitable  products $\star:\calf_\Omega\otimes\calf_\Omega\longrightarrow\calf_\Omega$ for which  the maps 
$\zeta_\stuffle^T$ and 
$\zeta_\shuffle^T$ are  {non trivial} algebra morphisms.  {This is achieved in Theorem \ref{thm:AZV_alg_morphism_shuffle}. Furthermore, Corollary \ref{coro:relation_nonasso} 
gives relations among the AZVs which have no equivalent among MZVs.} We build these products using the same recursive recipe as for  the shuffle products of Definition \ref{defn:shuffle_prod}, with the concatenation of words 
replaced by the grafting.
\begin{defn} \label{defn:shuffle_tree}
 Let $\Omega$ be a set (resp. $(\Omega,.)$ be a commutative semigroup and $\lambda\in\R$). The {\bf shuffle product on trees} $\shuffle$ (resp. the {\bf $\lambda$-shuffle product on trees} $\shuffle_\lambda$) of two forests $F$ and 
 $F'$ is defined recursively on $|F|+|F'|$. 
 
 If $|F|+|F'|=0$ (and thus $F=F'=\emptyset$), we set $\emptyset\shuffle\emptyset=\emptyset\shuffle_\lambda\emptyset=\emptyset$.
 
 For $N\in\N$, assume the shuffle (resp. $\lambda$-shuffle) products of forests has been defined on every forests $f,f'$ such that $|f|+|f'|\leq N$. Then for any two forests $F,F'$ such that $|F|+|F'|=N+1$;
 \begin{itemize}
  \item (Unit) if $F'=\emptyset$, set $\emptyset\shuffle F=F\shuffle\emptyset=F$; and $F\shuffle_\lambda\emptyset=\emptyset\shuffle_\lambda F=F$.
  \item (Compatibility with the concatenation product) if $F$ or $F'$ is not a tree, then we can write $F$ and $f$ uniquely as a concatenation of trees: $F=T_1\cdots T_k$ and $F'=t_1\cdots t_n$ with the $T_i$s and $t_j$s 
  nonempty, $k+n\geq3$ and set 
  \begin{equation*}
   F\shuffle F' = \frac{1}{kn}\sum_{i=1}^k\sum_{j=1}^n\left((T_i\shuffle t_j)T_1\cdots\widehat{T_i}\cdots T_n t_1\cdots\widehat{t_j}\cdots t_k\right)
  \end{equation*}
  (resp. 
  \begin{equation*}
   F\shuffle_\lambda F' = \frac{1}{kn}\sum_{i=1}^k\sum_{j=1}^n\left((T_i\shuffle_\lambda t_j)T_1\cdots\widehat{T_i}\cdots T_n t_1\cdots\widehat{t_j}\cdots t_k\right)~),
  \end{equation*}
  where $T_1\cdots\widehat{T_i}\cdots T_n$ stands for the concatenation of the trees $T_1,\cdots,T_n$ without the tree $T_i$.
  \item (Compatibility with the grafting) if $F=T = B_+^a(f)$ and $F'=T'=B_+^{a'}(f')$ are two nonempty trees, we set 
  \begin{equation*}
   T\shuffle T' = B_+^a(f\shuffle T') + B_+^{a'}(T\shuffle f')
  \end{equation*}
  (resp. 
  \begin{equation*}
   T\shuffle_\lambda T' = B_+^a(f\shuffle_\lambda T') + B_+^{a'}(T\shuffle_\lambda f') + \lambda B_+^{a.a'}(f\shuffle_\lambda f')~).
  \end{equation*}
 
 \end{itemize}
\end{defn}

\begin{rk}
 \begin{itemize}
  \item The well-definedness of the products $\shuffle$, $\shuffle_\lambda$ follows from the fact that any forest can be uniquely written (up to permutation) as an iteration of concatenations and graftings.
  \item As before (Remark \ref{rk:set_monoid}), we notice that $\shuffle_0=\shuffle$. We nevertheless make a distinction between the cases $\lambda=0$ and $\lambda\neq0$ as in the former case, the set $\Omega$ is not 
  required to have a semigroup structure. We will however treat the $\shuffle$ product as a special case of $\shuffle_\lambda$, keeping in mind that when dealing with 
  $\lambda=0$ (so with the shuffle product), we will always implicitly allow the decoration set to not have a semigroup structure. 
  \item We use the same symbols for shuffles on trees and shuffles and words, as whether we are working with words or with trees shall be clear from context. So, as for words, we write $\stuffle:=\shuffle_1$ the 
  {\bf stuffle product on trees} and $\shuffle_{-1}$ the {\bf anti-stuffle product on trees}.
 \end{itemize}

\end{rk}

\begin{ex}
 Here are examples of stuffle of trees with $(\Omega,.)=(\N,+)$:
 \begin{gather*}
  \tdun{n}\tdun{m}\shuffle_\lambda\tdun{p} = \frac{1}{2}\Big(\tdun{n}\left(\tddeux{~m}{~p} + \tddeux{~p}{~m} + \lambda\tdun{m+p}\quad\right) + \tdun{m}\left(\tddeux{~n}{~p} + \tddeux{~p}{~n} + \lambda\tdun{n+p}\quad\right)\Big) \\
  \tdtroisun{q}{~m}{n}\shuffle_\lambda\tdun{p} = \tdquatrequatre{p}{q}{n}{m} + \frac{1}{2}\Big(\tdquatretrois{q}{m}{p}{n} + \tdquatretrois{q}{p}{m}{n} + \lambda\tdtroisun{q}{m+p}{n} +\tdquatrequatre{q}{p}{n}{m} + \tdquatretrois{q}{n}{p}{m~} + \tdquatretrois{q}{p}{n}{m~} + \lambda\tdtroisun{q}{n+p}{m~}\Big) + \lambda\tdtroisun{q+p}{m}{n}. 
 \end{gather*}
 On the ground of these intermediate computations, one can compute more involved shuffles of trees. However due to their length,   we will not write down the result here. For example the shuffle 
 $\tdtroisun{r}{m}{n}\shuffle_\lambda\tdtroisun{s}{q}{p}$ is a sum of forty-four trees, of which twenty have six vertices, twenty have five vertices, and four have four vertices.
\end{ex}

We turn now our attention to the structures inherited by $\calf_\Omega$ from these products.

\begin{prop} \label{prop:shuffles_trees}
 Let $\lambda\in\R^*$ and $(\Omega,.)$ be a commutative semigroup; or $\lambda=0$ and $\Omega$ a set. Then $(\calf,\shuffle_\lambda,\emptyset)$ is an nonassociative, commutative, unital algebra.
\end{prop}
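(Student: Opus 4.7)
The statement contains three assertions: that $\shuffle_\lambda$ is a well-defined bilinear operation on $\calf$, that $\emptyset$ is a two-sided unit, and that $\shuffle_\lambda$ is commutative. Associativity is deliberately not claimed. Well-definedness and the unit clause are essentially bookkeeping: each nonempty forest decomposes uniquely, up to reordering of its factors, as a concatenation $T_1\cdots T_k$ of nonempty trees, and each nonempty tree decomposes uniquely as $B_+^a(f)$. The three clauses of Definition \ref{defn:shuffle_tree} are therefore mutually exclusive and jointly exhaustive: the empty-factor clause covers all cases where $F$ or $F'$ is empty, the concatenation clause the cases $k+m\geq 3$, and the grafting clause the remaining case $k=m=1$ of two nonempty trees. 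The concatenation formula is invariant under reordering of the $T_i$'s and $t_j$'s since one sums symmetrically over all pairs $(i,j)$, so the product descends to the commutative monoid of forests. Bilinear extension to $\calf$ and the unit property are then immediate.

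The substantive claim is commutativity, which I would prove by strong induction on the total weight $n=|F|+|F'|$. The base case $n=0$ forces $F=F'=\emptyset$ and is trivial. Assume commutativity for all pairs of total weight at most $n$ and take $F,F'$ with $|F|+|F'|=n+1$. If $F=\emptyset$ or $F'=\emptyset$, the unit clause gives the result. In the concatenation case $F=T_1\cdots T_k$, $F'=t_1\cdots t_m$ with $k+m\geq 3$, the defining formula
\[
F\shuffle_\lambda F' = \frac{1}{km}\sum_{i=1}^k\sum_{j=1}^m (T_i\shuffle_\lambda t_j)\,T_1\cdots\widehat{T_i}\cdots T_k\, t_1\cdots\widehat{t_j}\cdots t_m
\]
is symmetric under $(F,F')\leftrightarrow(F',F)$: one applies the inductive hypothesis to each inner shuffle $T_i\shuffle_\lambda t_j$ (valid because $k\geq 2$ or $m\geq 2$ forces $|T_i|+|t_j|<|F|+|F'|$) and then uses commutativity of the concatenation of forests to rearrange the remaining trees. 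In the grafting case $F=B_+^a(f)$, $F'=B_+^{a'}(f')$, the three terms in
\[
F\shuffle_\lambda F' = B_+^a(f\shuffle_\lambda F') + B_+^{a'}(F\shuffle_\lambda f') + \lambda B_+^{a\cdot a'}(f\shuffle_\lambda f')
\]
match term by term with those of $F'\shuffle_\lambda F$ after invoking the inductive hypothesis on each argument (all of which have strictly smaller total weight) and using $a\cdot a'=a'\cdot a$ in the decoration. When $\lambda=0$ the last term vanishes and no multiplication on $\Omega$ is needed, consistent with the weaker hypothesis on $\Omega$ in that case.

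The only real point of attention is the accounting: in both the recursive definition and in the induction, every recursive call must land on a pair of strictly smaller total weight, which is ensured by $k\geq 2$ or $m\geq 2$ in the concatenation clause and by the strict inequalities $|f|<|F|$, $|f'|<|F'|$ in the grafting clause. I do not expect any genuine obstacle, since the usual stumbling block for shuffle-type products, namely associativity, is explicitly not part of the conclusion; the averaging factor $1/(km)$ in the concatenation clause is in fact what breaks associativity, and a small example on three ladder trees would likely suffice to justify the adjective ``nonassociative.''
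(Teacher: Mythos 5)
Your proposal is correct and follows essentially the same route as the paper: induction on $|F|+|F'|$ with the same three-case split (empty, two nonempty trees, non-tree concatenation), using commutativity of $\Omega$ for the $\lambda$-term and the symmetry of the double sum in the concatenation clause. The paper likewise defers well-definedness to the unique decomposition of forests and justifies ``nonassociative'' by an explicit counterexample on three one-vertex trees, just as you anticipate.
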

\begin{proof}
 The case $\lambda=0$ is a consequence of the more general case as every undefined product in $\Omega$ disappear if $\lambda$ is set to $0$. Therefore we will only explicitly work out the case $\lambda\neq0$. 
 
 Let $(\Omega,.)$ be a commutative semigroup and $\lambda\in\R$.
 \begin{enumerate}
  \item By definition of $\shuffle_\lambda$, $\emptyset\shuffle_\lambda F=F\shuffle_\lambda\emptyset=F$ for any $F\in\calf_\Omega$.. Therefore $\emptyset$ is the unit for $\shuffle_\lambda$ as claimed.
  \item We prove the commutativity of $\shuffle_\lambda$ by induction on $|F|+|F'|$. If $|F|+|F'|=0$ then $F=F'=\emptyset$ and $F\shuffle_\lambda F'=\emptyset=F'\shuffle_\lambda F$ by definition. Let $N\in\N$ and assume that, for any 
  pair of forest $f,f'$ such that $|f|+|f'|\leq N$ we have $f\shuffle_\lambda f'=f'\shuffle_\lambda f$. Let $F, F'$ be two forests such that $|F|+|F'|=N+1$. We then distinguish three cases:
  \begin{itemize}
   \item If $F=\emptyset$ or $F'=\emptyset$, then $F\shuffle_\lambda F'= F'\shuffle_\lambda F$ since $\emptyset$ is the unit of $\shuffle_\lambda$.
   \item If $F=T=B_+^{a}(f)$ and $F'=T'=B_+^{a'}(f')$ are two nonempty trees, then 
   \begin{equation*}
    T\shuffle_\lambda T' - T'\shuffle_\lambda T = \lambda B_+^{a.a'}(f\shuffle_\lambda f') - \lambda B_+^{a'.a}(f'\shuffle_\lambda f).
   \end{equation*}
   The R.H.S. vanishes by commutativity of $(\Omega,.)$ and the induction hypothesis.
   \item If $F$ or $F'$ is not a tree, we write $F=T_1\cdots T_k$ and $F'=t_1\cdots t_n$ with $k+n\geq3$. Then 
   \begin{align*}
    F\shuffle_\lambda F' & = \frac{1}{kn}\sum_{i=1}^k\sum_{j=1}^n\left((T_i\shuffle_\lambda t_j)T_1\cdots\widehat{T_i}\cdots T_n t_1\cdots\widehat{t_j}\cdots t_k\right) \\
			 & = \frac{1}{kn}\sum_{i=1}^k\sum_{j=1}^n\left((t_j\shuffle_\lambda T_i)t_1\cdots\widehat{t_j}\cdots t_kT_1\cdots\widehat{T_i}\cdots T_n\right)
   \end{align*}
   by the induction hypothesis and the commutativity of the concatenation product of trees. Exchanging the order of the summations we indeed find $F\shuffle_\lambda F' = F'\shuffle_\lambda F$.
  \end{itemize}
  We have treated the three possible cases. Thus $\shuffle_\lambda$ is indeed commutative.
 \end{enumerate}

\end{proof}

\begin{rk} \label{rk:future_research}
 We will focus here on the applications of these new shuffle products of trees and their nonassociativity to the study of AZVs. Linked question, such that the existence of a coproduct associated to these shuffles and the 
 eventual existence of a comodule-bialgebra structure \cite{E-FFM} are interesting questions, but away from the scope of the present work. As such, they are left for further investigations.
% 
%  This new shuffle products on trees 
% 
%  We will use this algebra structure, however its existence raises other questions, left aside for further work as they do not impact our 
%  study of algebraic properties of AZVs. First, one may ponder what is the structure of $(\calf,\cdot, B_+,\shuffle_\lambda,\emptyset)$. Another natural question would be if it exists a coproduct 
%  $\Delta_\lambda$ on trees such that the triplet 
%  $(\calf_\Omega,\shuffle_\lambda,\Delta_\lambda)$ is a (nonassociative) bialgebra. Nonassociative bialgebras have been recently studied in \cite{Hol}. One might expect that the answer to this question is positive, with 
%  $\Delta_\lambda$ the well-known 
%  Connes-Kreimer coproduct. Indeed shuffles on trees generalise shuffles products on words, whose associated coproduct is the deconcatenation 
%  coproduct, of which the Connes-Kreimer coproduct is a generalisation. If so, two products would be dual (in the bialgebra sense) to the same coproduct, which raises other questions.
%  In particular, it will be worth investigating whether the above object can be endowed with a comodule-bialgebra structure \cite{E-FFM}. On the other hand, one might also expect that the nonassocativity of the shuffle 
%  products on trees spoils the compatibility with the coalgebra structure.
\end{rk}

\begin{rk}
 One can see\footnote{I thank Dominique Manchon for his very useful comments on this point.} that the coefficient $1/kn$ arising in the compatibility with the concatenation product of trees equation of the 
 definition of the shuffle products on trees will prevent associativity. However, neither is the same 
 product defined without these factors associative. If one computes $((T_1\dots T_n)\shuffle_\lambda T)\shuffle_\lambda(t_1\cdots t_m)$, we will see terms containing the forests $(T_i\shuffle_\lambda T)(T_j\shuffle_\lambda t_k)$ 
 appear. Such terms will not be present in products  $(T_1\dots T_n)\shuffle_\lambda (T\shuffle_\lambda(t_1\cdots t_m))$.
%  An explicit counterexample of this phenomenon is given below.
 
%  While the nonassociativity of these shuffle products might seem disappointing, we will see in the next subsection that they lead to no obstructions when studying their relation to Rota-Baxter operators, and in particular 
%  to AZVs. This is in part because the map $\zeta_\stuffle^T:\calf_{\N^*}^{\rm conv}\longrightarrow\R$ is not injective. Therefore the nonassocativity of $\stuffle$ carries information about the number of constraints that 
%  the map $\zeta_\stuffle^T$  obeys. To be more specific, let $[.,.,.]_{\stuffle}$ the {\bf associator} defined by
%  \begin{align*}
%   [.,.,.]_{\stuffle}:\calf_{\N^*}\times\calf_{\N^*}\times\calf_{\N^*} & \longmapsto \calf_{\N^*}
%   (F_1,F_2,F_3) & \longrightarrow (F_1\stuffle F_2)\stuffle F_3-F_1\stuffle(F_2\stuffle F_3).
%  \end{align*}
%  We similarly define associators for the shuffle and anti-stuffle products. Then the dimension of the image of the associator $[.,.,.]_{\stuffle}$ should provide an upper bound for the dimension (over $\Q$) of the image of $\zeta_\stuffle^T$.
\end{rk}

Let us end this subsection by an illustration of the nonassocativity of $\shuffle_\lambda$, with $\lambda$ set to $0$ in order to have simpler 
expressions to handle.
\begin{coex}
 Let $\Omega$ be a set. Then an easy computation gives, for any $(a,b,c,d)\in\Omega^4$
 \begin{equation*}
  \left(\tdun{a}\tdun{b}\shuffle~\tdun{c}\right)\shuffle~\tdun{d} = \tdun{a}\tdun{b}\shuffle\left(\tdun{c}\shuffle~\tdun{d}\right) + \frac{1}{2}\left(\tddeux{a}{d} + \tddeux{d}{a}\right)\left(\tddeux{b}{c} + \tddeux{c}{b}\right) + \frac{1}{2}\left(\tddeux{b}{d} + \tddeux{d}{b}\right)\left(\tddeux{a}{c} + \tddeux{c}{a}\right).
 \end{equation*}

\end{coex}

\subsection{Shuffles of trees and Rota-Baxter maps}

Shuffle products on trees are also linked to Rota-Baxter operators, as stated in the following theorem. It is a generalisation to the case of trees of Theorem 2.9 of \cite{CGPZ1}. However its proof differs from the proof of the 
theorem below in two ways. First, the proof in \cite{CGPZ1} uses the universal property of words while purely combinatorial techniques are used here (note that purely combinatorial techniques could also have been used in 
\cite{CGPZ1}). Moreover, the compatibility of the shuffle products on trees with the concatenation product of trees have no equivalence for words. Therefore, the last part of the current proof does not have an counterpart in 
\cite{CGPZ1}.
\begin{thm} \label{thm:branching_shuffle_tree_morphism}
 Let $\lambda\in\R^*$ and $(\Omega,.)$ be a commutative algebra. Let $P:\Omega\longrightarrow\Omega$ be a linear map. The following two statements are equivalents:
 \begin{enumerate}
  \item $P$ is a Rota-Baxter operator of weight $\lambda$;
  \item $\widehat{P}$ is an algebra morphism for the $\lambda$ shuffle product on trees:
  \begin{equation*}
   \widehat{P}(F\shuffle_\lambda F') = \widehat{P}(F)\widehat{P}(F').
  \end{equation*}
 \end{enumerate}
\end{thm}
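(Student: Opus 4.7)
The plan is to prove the two implications separately, with $(ii) \Rightarrow (i)$ being a direct calculation on a small forest and $(i) \Rightarrow (ii)$ proceeding by induction on $|F|+|F'|$.

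For $(ii) \Rightarrow (i)$, I would test the morphism property on the two one-vertex trees $F = \tdun{a}$ and $F' = \tdun{b}$. By the grafting rule in Definition \ref{defn:shuffle_tree} (using $\tdun{a} = B_+^a(\emptyset)$ and $\tdun{b} = B_+^b(\emptyset)$), one obtains $\tdun{a}\shuffle_\lambda\tdun{b} = \tddeux{a}{b} + \tddeux{b}{a} + \lambda\, B_+^{a.b}(\emptyset)$. Applying $\widehat{P}$ using \eqref{eq:prod_hat_phi} and comparing with $\widehat{P}(\tdun{a})\widehat{P}(\tdun{b}) = P(a)P(b)$ yields exactly the Rota-Baxter relation \eqref{eq:RBO}.

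For $(i) \Rightarrow (ii)$, I would induct on $N = |F| + |F'|$. The base $N=0$ is trivial, and the case where one of the forests is $\emptyset$ is immediate from $\widehat{P}(\emptyset)=1_\Omega$. The two remaining cases match the two nontrivial clauses of Definition \ref{defn:shuffle_tree}. First, when $F = T = B_+^a(f)$ and $F' = T' = B_+^{a'}(f')$ are both nonempty trees, applying $\widehat{P}$ to the grafting identity and using the induction hypothesis on the pairs $(f,T')$, $(T,f')$, $(f,f')$ (each of strictly smaller total size) rewrites $\widehat{P}(T\shuffle_\lambda T')$ as
\begin{equation*}
P\bigl(a\widehat{P}(f)\,P(a'\widehat{P}(f'))\bigr) + P\bigl(P(a\widehat{P}(f))\,a'\widehat{P}(f')\bigr) + \lambda\, P\bigl(a\widehat{P}(f)\,a'\widehat{P}(f')\bigr),
\end{equation*}
which by the Rota-Baxter relation applied to $x=a\widehat{P}(f)$ and $y=a'\widehat{P}(f')$ equals $P(x)P(y) = \widehat{P}(T)\widehat{P}(T')$. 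Second, when $F = T_1\cdots T_k$ and $F' = t_1\cdots t_n$ with $k+n\geq 3$, note that the condition $k+n\geq 3$ forces $|T_i|+|t_j| < |F|+|F'|$ for all $i,j$, so the induction hypothesis applies to each pair $(T_i, t_j)$. Using that $\widehat{P}$ is a morphism for concatenation \eqref{eq:prod_hat_phi}, one gets
\begin{equation*}
\widehat{P}(F\shuffle_\lambda F') = \frac{1}{kn}\sum_{i,j} \widehat{P}(T_i)\widehat{P}(t_j) \prod_{l\neq i}\widehat{P}(T_l)\prod_{m\neq j}\widehat{P}(t_m) = \widehat{P}(F)\widehat{P}(F'),
\end{equation*}
the $kn$ identical summands cancelling the prefactor $1/(kn)$.

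The only subtle step is verifying that the size condition in the concatenation case is strict, which is guaranteed exactly by the hypothesis $k+n\geq 3$ (at least one of $F, F'$ splits nontrivially), so the induction closes. The rest is bookkeeping; no new combinatorial identity beyond \eqref{eq:RBO} is needed, which is why this theorem is a clean tree-level analogue of Theorem \ref{thm:flattening}(\ref{thm:iii}), with the $1/(kn)$ weighting in Definition \ref{defn:shuffle_tree} precisely designed to make the concatenation case collapse to a single product.
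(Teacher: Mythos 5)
Your proposal is correct and follows essentially the same route as the paper: the reverse implication is obtained by evaluating the morphism identity on two one-vertex trees to recover the Rota--Baxter relation, and the forward implication is the same induction on $|F|+|F'|$ split into the empty, tree--tree (grafting plus Rota--Baxter identity applied to $a\widehat{P}(f)$ and $a'\widehat{P}(f')$), and concatenation (the $kn$ identical summands cancelling the $1/(kn)$ prefactor) cases. Your explicit remark that $k+n\geq 3$ guarantees the strict size decrease needed for the induction is a point the paper leaves implicit, but the argument is the same.
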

\begin{proof}
 $\Leftarrow$: as in the proof of Theorem \ref{thm:flattening}, writing $\widehat{P}(F\shuffle_\lambda F') = \widehat{P}(F)\widehat{P}(F')$ for 
 two forests $F$ and $F'$ having only one vertex leads to the Rota-Baxter equation \eqref{eq:RBO}.
 
 $\Rightarrow$: We prove the result by induction on $|F|+|F'|$. For $|F|+|F'|=0$ we have $F=F'=\emptyset$. Then the result holds by definition 
 of $\widehat{P}$.
 
 Assuming the result holds for any $f$, $f'$ such that $|f|+|f'|\leq n$ for some $n\in\N$. Let $F, F'$ be two forests such that $|F|+|F'|=n$. 
 We separate three cases:
 \begin{itemize}
  \item If $F$ or $F'$ is empty, then the result holds since $\emptyset$ is the unit of $\shuffle_\lambda$ and $\widehat{P}(\emptyset):=1_\Omega$.
  \item If $F$ and $F'$ are two nonempty trees, we write $F=B_+^a(f)$ and $F'=B_+^b(f')$. Then by definition of $\shuffle_\lambda$ and 
  linearity of $\widehat{P}$ we have
  \begin{align*}
   \widehat{P}(B_+^a(f)\shuffle_\lambda B_+^b(f')) & = \widehat{P}\left(B_+^a(f\shuffle_\lambda B_+^b(f')\right) + \widehat{P}\left(B_+^b(B_+^a(f)\shuffle_\lambda f')\right) + \lambda\widehat{P}\left(B_+^{ab}(f\shuffle_\lambda f')\right) \\
						   & = P\left(a\widehat{P}(f)\widehat{P}(B_+^b(f'))\right) + P\left(b\widehat{P}(B_+^a(f))\widehat{P}(f')\right) + \lambda P\left(ab\widehat{P}(f)\widehat{P}(f')\right)
  \end{align*}
  where we have used the definition of $\widehat{P}$ and the induction hypothesis. To increase readability, let us write $A:=a\widehat{P}(f)$ 
  and $B:=b\widehat{P}(f')$. Then using the commutativity of $\Omega$ we can write
  \begin{equation*}
   \widehat{P}(B_+^a(f)\shuffle_\lambda B_+^b(f')) = P(AP(B)) + P(P(A)B) + \lambda P(AB) = P(A)P(B)
  \end{equation*}
  since $P$ is a Rota-Baxter operator of weight $\lambda$. Observing that $P(A)=\widehat{P}(F)$ and $P(B)=\widehat{P}(F')$ we obtain the result.
  \item Finally, if $F$ or $F'$ is non-tree forest, we write $F=T_1\cdots T_k$ and $F'=t_1\cdots t_n$ with $k+n\geq3$. Then, using the 
  definitions of $\shuffle_\lambda$ and $\widehat{P}$ (with the convention that an empty product $\prod_{n\in\emptyset}$ is one) we got
  \begin{align*}
   \widehat{P}\left(F\shuffle_\lambda F'\right) & = \frac{1}{kn}\sum_{i=1}^k\sum_{j=1}^n\widehat{P}\left(T_i\shuffle_\lambda t_j\right)\prod_{\substack{p=1\\p\neq i}}^k\widehat{P}(T_p)\prod_{\substack{q=1\\q\neq j}}^k\widehat{P}(t_q) \\
						& = \frac{1}{kn}\sum_{i=1}^k\sum_{j=1}^n\widehat{P}(T_i)\widehat{P}(t_j)\prod_{\substack{p=1\\p\neq i}}^k\widehat{P}(T_p)\prod_{\substack{q=1\\q\neq j}}^k\widehat{P}(t_q) \quad \text{by induction hypothesis} \\
						& = \frac{1}{kn}\sum_{i=1}^k\sum_{j=1}^n\prod_{\substack{p=1}}^k\widehat{P}(T_p)\prod_{\substack{q=1}}^k\widehat{P}(t_q) \\
						& = \widehat{P}(F)\widehat{P}(F')
  \end{align*}
  since $\widehat{P}$ is an algebra morphism for the concatenation product of trees.
 \end{itemize}
 This concludes the induction step and the proof of the Theorem.
\end{proof}

\subsection{Applications to arborified zetas}

In order to apply Theorem \ref{thm:branching_shuffle_tree_morphism} to the case of AZVs we first need the following Lemmas.
\begin{lem} \label{lem:stability_conv_shuffle}
 $(\calf_{\N^*}^{\rm conv},\stuffle)$ (resp. $(\calf_{\N^*}^{\rm conv},\shuffle_{-1})$, resp. $(\calf_{\{x,y\}}^{\rm conv},\shuffle)$) is a
 subalgebra of $(\calf_{\N^*},\stuffle)$ (resp. $(\calf_{\N^*},\shuffle_{-1})$, resp. $(\calf_{\{x,y\}},\shuffle)$).
\end{lem}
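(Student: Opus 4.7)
The plan is to prove all three assertions by induction on $n=|F|+|F'|$, following the recursive definition of $\shuffle_\lambda$ in Definition~\ref{defn:shuffle_tree}. The base case $n=0$ and the case where one of $F,F'$ is empty are immediate, since $\emptyset$ is a unit for $\shuffle_\lambda$ and lies in each convergence class. For $n\geq 1$ with both forests nonempty, exactly one of the two remaining rules applies, leading either to a grafting subcase (both $F,F'$ are single trees) or to a concatenation-compatibility subcase (at least one has more than one tree).

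The $\N^*$ cases with $\lambda=\pm 1$ are the simplest, because convergence in $\calf_{\N^*}^{\rm conv}$ constrains only the root decorations. In the grafting subcase, writing $F=T=B_+^a(f)$ and $F'=T'=B_+^b(f')$ with $a,b\geq 2$, the definition produces three summands of the form
\begin{equation*}
T\shuffle_\lambda T'=B_+^a(f\shuffle_\lambda T')+B_+^b(T\shuffle_\lambda f')+\lambda B_+^{a+b}(f\shuffle_\lambda f'),
\end{equation*}
and each is a single tree whose root is decorated by $a\geq 2$, $b\geq 2$, or $a+b\geq 4$; convergence is therefore automatic, regardless of what is grafted underneath. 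The concatenation subcase $F=T_1\cdots T_k$, $F'=t_1\cdots t_n$ with $k+n\geq 3$ reduces to strictly smaller instances, since $|T_i|+|t_j|<|F|+|F'|$; the inductive hypothesis gives that each $T_i\shuffle_\lambda t_j$ is a linear combination of convergent forests, and concatenating with the remaining convergent trees preserves convergence.

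The shuffle case on $\calf_{\{x,y\}}^{\rm conv}$ is more delicate because convergence also imposes that every leaf and every branching vertex be decorated by $y$. My strategy is to strengthen the induction by proving in parallel that $\calf_{\{x,y\}}^{\rm semi}$ is also a subalgebra for $\shuffle$; the convergent case will then follow easily. The key structural observation is that any nonempty convergent tree has the form $T=B_+^x(T_1)$ with $T_1$ a \emph{single} semi-convergent tree: its root is decorated by $x$, so it cannot be a branching vertex and must therefore have a unique child, and it cannot be a lone vertex either (which would make it simultaneously root and leaf, forcing $x=y$). Consequently, the grafting rule produces $T\shuffle T'=B_+^x(T_1\shuffle T')+B_+^x(T\shuffle T_1')$, and each of $T_1\shuffle T'$ and $T\shuffle T_1'$ is the shuffle of two nonempty single trees, so by the grafting rule it expands as a sum of single trees, which are semi-convergent by the parallel induction. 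Appending a new root decorated by $x$ with a unique child thus yields a convergent tree. The concatenation subcase is handled exactly as in the $\N^*$ setting.

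The main technical obstacle is to set up the parallel semi-convergence induction cleanly. One must check that whenever a term $B_+^a(G)$ appears with $G$ a sum of semi-convergent forests containing several trees, the decoration $a$ is necessarily $y$ so that the newly created branching root is legitimate. This is traced back to the original trees: if $T=B_+^a(f)$ is semi-convergent and $f$ has at least two trees, then the root of $T$ is branching and $a=y$ is forced. Once this bookkeeping is carried out, the induction closes and all three subalgebra statements follow.
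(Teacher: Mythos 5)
Your proof is correct and follows the same route as the paper, which only sketches the induction on $|F|+|F'|$ along the recursive definition of $\shuffle_\lambda$ and leaves the details to the reader. Your fleshed-out version supplies exactly the details that matter — in particular the parallel semiconvergence induction and the observation that a shuffle of two nonempty trees is a sum of single trees, which is what makes the $\calf_{\{x,y\}}^{\rm conv}$ case close — so the bookkeeping you describe does go through.
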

\begin{proof}
 Using the fact that $F\shuffle_\lambda F'=\sum_i f_i$ implies $|f_i|\leq |F|+|F'|$, we can easily prove the Lemma by 
 induction on $|F|+|F'|$, which then holds by definition of $\calf_{\N^*}^{\rm conv}$ and $\calf_{\{x,y\}}^{\rm conv}$.
\end{proof}
\begin{lem} \label{lem:sharp_morphism_shuffle}
 For any algebras morphism $P:\Omega_1\longrightarrow\Omega_2$ between two commutative algebras, the lifted map 
 $P^\sharp:\calf_{\Omega_1}\longrightarrow\calf_{\Omega_2}$ is an algebra morphism for the $\lambda$-shuffles of trees, for any value of 
 $\lambda\in\R$.
\end{lem}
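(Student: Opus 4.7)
The plan is a standard induction on $n = |F| + |F'|$, exploiting the three-case recursive definition of $\shuffle_\lambda$ on forests together with the three defining relations of $P^\sharp$ (it sends the empty forest to the empty forest, is a morphism for concatenation, and satisfies $P^\sharp(B_+^\omega(F)) = B_+^{P(\omega)}(P^\sharp(F))$). The base case $n=0$ is immediate since $P^\sharp(\emptyset) = \emptyset$ and $\emptyset$ is the unit for $\shuffle_\lambda$ on either side.

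For the induction step, I treat the three cases of Definition \ref{defn:shuffle_tree} in parallel:
\begin{itemize}
\item If $F = \emptyset$ or $F' = \emptyset$, the identity is immediate from the unit property and $P^\sharp(\emptyset) = \emptyset$.
\item If $F$ and $F'$ are both nonempty trees, write $F = B_+^a(f)$ and $F' = B_+^{a'}(f')$. Applying $P^\sharp$ to the tree-grafting rule of $\shuffle_\lambda$, and using both the defining relation $P^\sharp \circ B_+^\omega = B_+^{P(\omega)} \circ P^\sharp$ and the induction hypothesis applied to the three pairs $(f, B_+^{a'}(f'))$, $(B_+^a(f), f')$, $(f, f')$, one obtains
\begin{align*}
P^\sharp(F \shuffle_\lambda F') &= B_+^{P(a)}\bigl(P^\sharp(f)\shuffle_\lambda P^\sharp(F')\bigr) + B_+^{P(a')}\bigl(P^\sharp(F)\shuffle_\lambda P^\sharp(f')\bigr) \\
& \quad + \lambda\, B_+^{P(a\cdot a')}\bigl(P^\sharp(f)\shuffle_\lambda P^\sharp(f')\bigr).
\end{align*}
Here the crucial input is that $P$ is an algebra morphism, so $P(a\cdot a') = P(a)\cdot P(a')$; comparing with the tree-grafting rule applied to $P^\sharp(F) = B_+^{P(a)}(P^\sharp(f))$ and $P^\sharp(F') = B_+^{P(a')}(P^\sharp(f'))$ yields exactly $P^\sharp(F)\shuffle_\lambda P^\sharp(F')$.
\item If $F$ or $F'$ is not a tree, write $F = T_1\cdots T_k$ and $F' = t_1\cdots t_n$ with $k+n\geq 3$. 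Apply $P^\sharp$ to the concatenation-compatibility rule; since $P^\sharp$ is an algebra morphism for the concatenation product of trees, it moves inside the product of trees, and the induction hypothesis applied to each pair $(T_i, t_j)$ together with the identity $P^\sharp(T_i\shuffle_\lambda t_j) = P^\sharp(T_i)\shuffle_\lambda P^\sharp(t_j)$ yields the claim after recognizing the right-hand side as the concatenation-compatibility rule applied to $P^\sharp(F) = P^\sharp(T_1)\cdots P^\sharp(T_k)$ and $P^\sharp(F') = P^\sharp(t_1)\cdots P^\sharp(t_n)$.
\end{itemize}

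The only real ``obstacle'' is a bookkeeping one: making sure the grafting case uses the algebra morphism property of $P$ on the very term $B_+^{a\cdot a'}$ that $\shuffle_\lambda$ produces, and making sure that in the non-tree case the coefficient $1/(kn)$ and the hat-removed concatenations are preserved verbatim under $P^\sharp$ (which they are, because $P^\sharp$ is linear and multiplicative for concatenation). No analytic input is needed, and the argument is uniform in $\lambda$, including the degenerate case $\lambda=0$ where the third term in the grafting rule simply vanishes and no semigroup structure on the decoration set is required.
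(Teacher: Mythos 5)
Your proof is correct and follows exactly the route the paper indicates: the paper's own proof is a two-line sketch stating that the result follows by induction on the number of vertices, using that $P^\sharp$ is a morphism of operated algebras and that $P$ is an algebra morphism, which is precisely the induction you carry out in detail. Your identification of where the algebra-morphism hypothesis on $P$ enters (the $B_+^{a\cdot a'}$ term in the grafting case) is the right key observation.
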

\begin{proof}
 This result clearly holds from the definition of $P^\sharp$ and the fact that $P$ is an algebra morphism. However, it is easily proven, once again by induction on the number of vertices of forests, using the fact that 
 $P^\sharp$ is a morphism of operated algebras.
\end{proof}

We can now prove the main result of this Section.
\begin{thm} \label{thm:AZV_alg_morphism_shuffle}
 The map $\zeta^T_\stuffle:\calf_{\N^*}^{\rm conv}\longrightarrow\R$ (resp. $\zeta^{T,\star}_\stuffle:\calf_{\N^*}^{\rm conv}\longrightarrow\R$, resp. $\zeta^T_\shuffle:\calf_{\{x,y\}}^{\rm conv}\longrightarrow\R$) is an 
 algebra morphism for the stuffle (resp. anti-stuffle, resp. shuffle) product on trees.
\end{thm}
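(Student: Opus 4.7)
The strategy is to factor each AZV as a composition of algebra morphisms for the appropriate shuffle product on trees, and then post-compose with an evaluation/limit which is itself an algebra morphism. Recall the factorisations $\calz_\lambda = \widehat{\frakS}_\lambda \circ \calr^\sharp$ for the series side and $Li_F^T(z) = \lim_{\epsilon\to 0}\widehat{\cali_\epsilon^z}(\calr_{\{x,y\}}^\sharp(F))$ for the integral side; the three AZVs $\zeta^T_\stuffle$, $\zeta^{T,\star}_\stuffle$, $\zeta^T_\shuffle$ are then obtained by applying $\operatorname{ev}_\infty$ to $\calz_{-1}$ or $\calz_0$, or by taking $\lim_{z\to 1}$ of $Li^T$. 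Lemma \ref{lem:stability_conv_shuffle} guarantees that the relevant convergent subalgebras are stable under the three shuffle products, so the restrictions are well-defined.

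First I would verify that $\calr^\sharp$ (resp.\ $\calr_{\{x,y\}}^\sharp$) is an algebra morphism for $\shuffle_\lambda$ on trees. The map $\calr$ is a semigroup morphism from $(\N^*,+)$ into the multiplicative semigroup of $\calp^{*;*}$, since $\calr(n+m)(x) = x^{-(n+m)} = \calr(n)(x)\calr(m)(x)$; the inductive proof of Lemma \ref{lem:sharp_morphism_shuffle} only uses this compatibility with the semigroup operation at the grafting step (the $\lambda$-correction term), so it adapts verbatim to give $\calr^\sharp(F \shuffle_\lambda F') = \calr^\sharp(F) \shuffle_\lambda \calr^\sharp(F')$ for every $\lambda \in \R$. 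For $\calr_{\{x,y\}}$ we only need the case $\lambda=0$, where no semigroup structure on the decoration set is required. Second, Theorem \ref{thm:branching_shuffle_tree_morphism} is invoked: by Example \ref{ex:RBmap}(\ref{ex:RB_sum}), $\frakS_{-1}$ is Rota-Baxter of weight $+1$; by Example \ref{ex:RBmap}(\ref{ex:RB_sum_star}), $\frakS_0$ is Rota-Baxter of weight $-1$; by Example \ref{ex:RBmap}(\ref{ex:RB_integration}), $\cali_\epsilon^z$ is Rota-Baxter of weight $0$; hence their branchings $\widehat{\frakS}_{-1}$, $\widehat{\frakS}_0$, $\widehat{\cali_\epsilon^z}$ are algebra morphisms for $\stuffle$, $\shuffle_{-1}$, $\shuffle$ on trees respectively. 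Composing with $\calr^\sharp$ or $\calr_{\{x,y\}}^\sharp$ yields the required morphism property for $\calz_{-1}$, $\calz_0$ and the arborified polylogarithm map; the relevant pointwise limit then concludes.

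The main technical point to watch is the ambient algebra in which the Rota-Baxter identity is being used: Example \ref{ex:RBmap} states it on sequences and on $L^1_{\rm loc}$, whereas Theorem \ref{thm:branching_shuffle_tree_morphism} wants it on the codomain of the branching, here $\calp^{*;*}$ or $\cali(I)$. The resolution, already used in the proof of Proposition \ref{prop:zeta_stuffle_mor}, is to verify the identity $\widehat{\frakS}_\lambda(F \shuffle_\lambda F')(N) = \widehat{\frakS}_\lambda(F)(N)\,\widehat{\frakS}_\lambda(F')(N)$ pointwise at integers $N$ large enough (where the sequence-level Rota-Baxter identity applies directly) and then to send $N \to \infty$ via Definition-Proposition \ref{defnprop:arborified_zeta}. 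For $\cali_\epsilon^z$ the Rota-Baxter identity already holds fibrewise in $z$ on continuous functions, so we may pass the successive limits $\epsilon \to 0$ and $z \to 1$ using Theorem \ref{thm:arborified_polylogs} and Lemma \ref{lem:lim_z_1_forests}.
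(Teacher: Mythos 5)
Your proof is correct and follows essentially the same route as the paper: factor each AZV as an evaluation composed with $\widehat{\frakS}_\lambda\circ\calr^\sharp$ (resp.\ the branched integration map), apply Lemma \ref{lem:sharp_morphism_shuffle} and Theorem \ref{thm:branching_shuffle_tree_morphism} with the Rota-Baxter weights from Example \ref{ex:RBmap}, and conclude via Lemma \ref{lem:stability_conv_shuffle} and the fact that the evaluation/limit is an algebra morphism. The two technical points you single out --- that $\calr$ intertwines $(\N^*,+)$ with the pointwise product of symbols so that Lemma \ref{lem:sharp_morphism_shuffle} applies, and that the Rota-Baxter identity for $\frakS_\lambda$ only holds after evaluation at large integers, so the morphism identity must be checked pointwise at $N$ and then pushed to the limit --- are genuine refinements of the paper's terser wording, and you resolve them exactly as the paper does implicitly in Proposition \ref{prop:zeta_stuffle_mor} and Theorem \ref{thm:main_result_stuffle}.
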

\begin{proof}
 Recall that $\zeta^T_\stuffle=\rm{ev}_\infty\circ\widehat\frakS_{-1}\circ\calr^\sharp$ and 
 $\zeta^{T,\star}_\stuffle=\rm{ev}_\infty\circ\widehat\frakS_{0}\circ\calr^\sharp$. By Lemma \ref{lem:sharp_morphism_shuffle} and Theorem 
 \ref{thm:branching_shuffle_tree_morphism} (which can be used since $\frakS_{-1}$ is a Rota-Baxter operator of weight $+1$  and $\frakS_{+1}$ is a Rota-Baxter operator of weight $-1$, 
 as stated in Example \ref{ex:RBmap}), the maps $\widehat\frakS_{-1}\circ\calr^\sharp$ and 
 $\widehat\frakS_{0}\circ\calr^\sharp$ are 
 a morphism for the stuffle product on trees and the anti-stuffle product respectively. The result then follows from Lemma 
 \ref{lem:stability_conv_shuffle} and the fact that ${\rm ev}_\infty$ is an algebra morphism.
 
 The case of $\zeta_\shuffle^T={\rm ev}_1\circ Li^T\circ\calr_{\{x,y\}}^\sharp$ is proven exactly in the same fashion, using the fact that the 
 map $Li^T$ is obtained from the branching of the integration map $\cali$, which is a Rota-Baxter operator of weight $0$.
\end{proof}
\begin{rk}
 Since $\iota_\Omega(w\shuffle_\lambda w')=\iota_\Omega(w)\shuffle_\lambda\iota_\Omega(w')$ and since AZVs restricted to ladder trees coincide with MZVs, this result also implies that $\zeta_\stuffle$ (resp. 
 $\zeta_\stuffle^\star$, resp. $\zeta_\shuffle$) is an algebra morphism for the stuffle (resp. anti-stuffle, resp. shuffle) product. 
\end{rk}
As a concluding observation for this section, let us show that Theorem \ref{thm:AZV_alg_morphism_shuffle} induces relations amongst 
AZVs, namely that the images of the associators of the shuffle products on trees lie in the kernels of the AZVs.

Recall that the {\bf associator} $[.,.,.]_{\stuffle}$  of the stuffle product is defined by
 \begin{align*}
  [.,.,.]_{\stuffle}:\calf_{\N^*}\times\calf_{\N^*}\times\calf_{\N^*} & \longmapsto \calf_{\N^*} \\
  (F_1,F_2,F_3) & \longrightarrow (F_1\stuffle F_2)\stuffle F_3-F_1\stuffle(F_2\stuffle F_3).
 \end{align*}
 We similarly define associators for the shuffle and anti-stuffle products.

\begin{coro} \label{coro:relation_nonasso}
 For any $\lambda\in\{-1,0,1\}$,
 \begin{equation*}
  {\rm Im}\left([.,.,.]_{\shuffle_\lambda}\right)\subseteq{\rm Ker}\left(\zeta^T_{\shuffle_\lambda}\right).
 \end{equation*}
More specifically, for any  $(F_1,F_2,F_3)\in(\calf_{\N^*}^{\rm conv})^3$ and $(f_1,f_2,f_3)\in(\calf_{\{x,y\}}^{\rm conv})^3$ we have
 \begin{align*}
  & (F_1\stuffle F_2)\stuffle F_3 - F_1\stuffle (F_2\stuffle F_3) \in {\rm Ker}(\zeta_\stuffle^T) \\
  & (F_1\shuffle_{-1} F_2)\shuffle_{-1} F_3 - F_1\shuffle_{-1} (F_2\shuffle_{-1} F_3) \in {\rm Ker}(\zeta_\stuffle^{T,\star}) \\
  & (f_1\shuffle f_2)\shuffle f_3 - f_1\shuffle (f_2\shuffle f_3) \in {\rm Ker}(\zeta_\shuffle^{T}).
 \end{align*}

\end{coro}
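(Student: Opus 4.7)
The statement is an immediate structural consequence of Theorem \ref{thm:AZV_alg_morphism_shuffle}: an algebra morphism from a (not necessarily associative) algebra into an associative algebra automatically annihilates all associators. My plan is therefore to simply unfold this observation carefully, after checking that the associator is well-defined on the convergent subalgebra.

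First, I would invoke Lemma \ref{lem:stability_conv_shuffle} to ensure that the relevant convergent subalgebras $\calf_{\N^*}^{\rm conv}$ and $\calf_{\{x,y\}}^{\rm conv}$ are stable under $\stuffle$, $\shuffle_{-1}$ and $\shuffle$ respectively. This guarantees that iterated products such as $(F_1\shuffle_\lambda F_2)\shuffle_\lambda F_3$ and $F_1\shuffle_\lambda(F_2\shuffle_\lambda F_3)$ lie in the domain of $\zeta^T_{\shuffle_\lambda}$, so that the associator expression makes sense inside the kernel.

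Second, I would apply Theorem \ref{thm:AZV_alg_morphism_shuffle} twice in succession. Fixing $\lambda\in\{-1,0,1\}$ and writing $\zeta:=\zeta^T_{\shuffle_\lambda}$ for brevity, the morphism property yields
\begin{equation*}
\zeta\bigl((F_1\shuffle_\lambda F_2)\shuffle_\lambda F_3\bigr) = \zeta(F_1\shuffle_\lambda F_2)\,\zeta(F_3) = \zeta(F_1)\,\zeta(F_2)\,\zeta(F_3),
\end{equation*}
and similarly
\begin{equation*}
\zeta\bigl(F_1\shuffle_\lambda(F_2\shuffle_\lambda F_3)\bigr) = \zeta(F_1)\,\zeta(F_2\shuffle_\lambda F_3) = \zeta(F_1)\,\zeta(F_2)\,\zeta(F_3).
\end{equation*}
Since ordinary multiplication in $\R$ is associative, both right-hand sides coincide, so their difference $\zeta\bigl([F_1,F_2,F_3]_{\shuffle_\lambda}\bigr)$ vanishes by linearity of $\zeta$. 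This gives the three displayed inclusions with $\lambda=1,\,-1,\,0$ respectively.

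There is essentially no obstacle here; the only thing to be mildly careful about is the distinction between the three instances ($\zeta^T_\stuffle$, $\zeta^{T,\star}_\stuffle$, $\zeta^T_\shuffle$) and making sure each is applied on its proper convergent domain, which is precisely what Lemma \ref{lem:stability_conv_shuffle} supplies. The global inclusion ${\rm Im}([.,.,.]_{\shuffle_\lambda})\subseteq {\rm Ker}(\zeta^T_{\shuffle_\lambda})$ then follows by trilinearly extending the pointwise statement on triples of basis forests.
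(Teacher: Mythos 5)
Your proof is correct and follows exactly the paper's argument: the statement is deduced from Theorem \ref{thm:AZV_alg_morphism_shuffle} together with the associativity of multiplication in $\R$, with Lemma \ref{lem:stability_conv_shuffle} guaranteeing that the associators stay in the convergent domains. The paper's own proof is a one-line version of the same reasoning, so your write-up is simply a more detailed unfolding of it.
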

\begin{proof}
 This result is a direct consequence of Theorem \ref{thm:AZV_alg_morphism_shuffle} together with the associativity of the usual product on $\R$.
\end{proof}

\begin{rk}
 These relations between AZVs have non equivalent among MZVs. However, it shall not be expected that it will induce relations among MZVs that cannot be obtained from the usual regularised double shuffle relations. 
 Indeed, there are 
 more AZVs of a given weight than MZVs of the same weight, but all AZVs can be written as MZVs, according to Theorems \ref{thm:main_result_stuffle} and \ref{thm:main_result_shuffle}. 
 Therefore we should indeed find more relations amongst AZVs. This indicates that the nonassocativity of the shuffle products of trees encodes the nonlinearity\footnote{in the sense that 
 not all trees are ladder trees} of trees.
\end{rk}

\begin{ex}
 We illustrate the above Remark by a simple example. Computing $(\tdun{2}\tdun{2}\stuffle\tdun{2})\stuffle\tdun{2}$ and $\tdun{2}\tdun{2}\stuffle(\tdun{2}\stuffle\tdun{2})$, we find
 \begin{align*}
  & \zeta_\stuffle^T\Big((\tdun{2}\tdun{2}\stuffle\tdun{2})\stuffle\tdun{2} - \tdun{2}\tdun{2}\stuffle(\tdun{2}\stuffle\tdun{2})\Big) = 0 \\
  \Longleftrightarrow & \Big[6\zeta(2,2,2) + 3\zeta(2,4) + 3\zeta(4,2) + \zeta(6)\Big]\zeta(2) = \Big[2\zeta(2,2)+\zeta(4)\Big]^2;
 \end{align*}
 which can indeed be shown using the stuffle product on words and checked using e.g. the online calculator \cite{BLI}.

\end{ex}

To conclude this Section, let us emphasize that the shuffle products on trees, while relevant to the study of AZVs, also lead to new and exciting questions, some of which were pointed out in Remark \ref{rk:future_research}.
We have opted to not treat them here, as they would carry us away from AZVs that are the main concern of this paper and are left for future work.

\section*{Appendix: Further relations amongst arborified zetas} \label{section:further}

\addcontentsline{toc}{section}{Appendix: Further relations amongst arborified zetas}

\renewcommand\thesection{\Alph{section}}

\setcounter{section}{1}

\setcounter{subsection}{0}

\setcounter{thm}{0}

\subsection{Branched binarisation map}

We aim to generalise the map \eqref{eq:binarisation_map} to trees, that is to build a map
$\fraks^T :\calf_{\N^*}\longrightarrow\calf_{\{x,y\}}$ which coincide with $\iota_{\{x,y\}}\circ\fraks$ when restricted to ladder trees.
In order to do so, once again we use  the universal property of trees given by \ref{thm:univ_prop_tree}.
\begin{defn} \label{defn:branched_bin_map}
 Let $\beta:\N^*\times\calf_{\{x,y\}}\longrightarrow\calf_{\{x,y\}}$ defined by
 \begin{align*}
  \beta(1,F) := B_+^y(F) \\
  \beta(n,F) := \left(B_+^x\right)^{n-1}\left(B_+^y(F)\right)
 \end{align*}
 for any $n\geq2$. The {\bf branched binarisation map} is the morphism of operated algebras 
 $\fraks^T :\calf_{\N^*}\longrightarrow\calf_{\{x,y\}}$ whose existence and uniqueness is given by Theorem \ref{thm:univ_prop_tree}.
\end{defn}
\begin{ex} Here are some example of the action of the binarisation map:
 \begin{equation*}
  \fraks^T(\tdun{1}) = \tdun{y} \qquad  \fraks^T(\tdun{2}) = \tddeux{$x$}{$y$} \qquad \fraks^T\left(\tdtroisun{1}{1}{2}\right) = \tdquatredeux{$y$}{$y$}{$x$}{$y$} \qquad
  \fraks^T\left(\tdtroisun{$2$}{$1$}{$2$}\right) = \tcinqonze{$x$}{$y$}{$x$}{$y$}{$y$}.
 \end{equation*}

\end{ex}
Now we state a simple lemma relating convergent forests in $\calf_{\N^*}$ and in $\calf_{\{x,y\}}$.
\begin{lem} \label{lem:bin_map_properties}
The branched binarisation map maps convergent forests to convergent forests:
 \begin{equation*}
  \fraks^T\left(\calf_{\N^*}^{\rm conv}\right) = \calf_{\{x,y\}}^{\rm conv}.
 \end{equation*}
 Furthermore, $\fraks^T$ is a bijection.
\end{lem}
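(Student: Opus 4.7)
The plan is to first establish $\fraks^T(\calf_{\N^*}^{\rm conv}) \subseteq \calf_{\{x,y\}}^{\rm conv}$ by induction on $|F|$, and then to construct an explicit two-sided inverse to obtain bijectivity. Throughout the argument I rely on the fact that $\fraks^T$ is a morphism of operated algebras, so that it is determined by the relations $\fraks^T(\emptyset) = \emptyset$, $\fraks^T(FF') = \fraks^T(F)\fraks^T(F')$, $\fraks^T(B_+^1(F)) = B_+^y(\fraks^T(F))$, and $\fraks^T(B_+^n(F)) = (B_+^x)^{n-1}(B_+^y(\fraks^T(F)))$ for $n \geq 2$.

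I would first prove the slightly stronger statement that $\fraks^T(\calf_{\N^*}) \subseteq \calf_{\{x,y\}}^{\rm semi}$ by induction on the number of vertices. Grafting $B_+^y$ atop a semiconvergent forest creates either a new $y$-leaf (when the grafted forest is empty) or a new $y$-vertex whose ``leaf or branching'' status is harmless since its decoration is already $y$; stacking any number of $B_+^x$ on top only introduces $x$-vertices that are neither leaves nor branching vertices, so semiconvergence is preserved. Specialising to $F$ convergent, every root of $F$ has decoration $\geq 2$, so the corresponding root of $\fraks^T(F)$ comes from an outermost $B_+^x$ and is therefore an $x$, giving $\fraks^T(F) \in \calf_{\{x,y\}}^{\rm conv}$.

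For bijectivity, I construct an explicit inverse $\mathfrak{r}^T \colon \calf_{\{x,y\}}^{\rm semi} \longrightarrow \calf_{\N^*}$. The key observation is that in any semiconvergent forest every $x$-vertex has exactly one child: it cannot be a leaf nor branching (those are $y$-decorated), so it has neither zero nor $\geq 2$ children. Consequently every nonempty tree $G \in \calf_{\{x,y\}}^{\rm semi}$ admits a unique decomposition $G = (B_+^x)^{n-1}(B_+^y(H))$, where $n-1 \geq 0$ is the length of the maximal $x$-chain issuing from the root and $H$ is the (unique, semiconvergent) subforest hanging from the unique $y$ terminating that chain. I then set $\mathfrak{r}^T(G) := B_+^n(\mathfrak{r}^T(H))$ for trees and extend by concatenation to forests; note that $H$ is itself semiconvergent so the recursion is well-posed.

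The identities $\mathfrak{r}^T \circ \fraks^T = \mathrm{id}$ and $\fraks^T \circ \mathfrak{r}^T = \mathrm{id}$ then follow by induction on $|F|$ and $|G|$ respectively, matching the recursive clauses of both maps term by term. When $G \in \calf_{\{x,y\}}^{\rm conv}$ is a tree its root is an $x$, forcing $n \geq 2$ in the decomposition, so $\mathfrak{r}^T(G)$ lies in $\calf_{\N^*}^{\rm conv}$; combined with the inclusion from the first part this yields both the equality $\fraks^T(\calf_{\N^*}^{\rm conv}) = \calf_{\{x,y\}}^{\rm conv}$ and the claimed bijectivity. The only delicate point is to justify the canonical $x$-chain decomposition above: once that uniqueness is nailed down, everything else reduces to routine inductive bookkeeping.
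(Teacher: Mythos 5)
Your proof is correct, and for the inclusion $\fraks^T\left(\calf_{\N^*}^{\rm conv}\right)\subseteq\calf_{\{x,y\}}^{\rm conv}$ it follows the same route as the paper: first note that the operation $\beta$ only ever produces semiconvergent output (new $y$-vertices may be leaves or branching vertices, new $x$-vertices are neither), then use the root decoration $\geq 2$ to force an $x$ at each root. Where you genuinely add something is the bijectivity. The paper disposes of it in one sentence (``shown by induction, using $|\fraks^T(F)|=||F||$'') and then freely applies $(\fraks^T)^{-1}$ to convergent forests; you instead isolate the structural fact that makes this work, namely that in a semiconvergent forest every $x$-vertex has exactly one direct successor, so every nonempty semiconvergent tree decomposes uniquely as $(B_+^x)^{n-1}\left(B_+^y(H)\right)$ with $H$ again semiconvergent. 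This gives an explicit two-sided inverse $\mathfrak{r}^T$ on $\calf_{\{x,y\}}^{\rm semi}$, identifies the image of $\fraks^T$ as exactly the span of semiconvergent forests (thereby making precise in what sense $\fraks^T$ ``is a bijection,'' which the lemma leaves ambiguous), and yields the surjectivity onto $\calf_{\{x,y\}}^{\rm conv}$ needed for the stated \emph{equality} — a direction the paper's displayed argument only addresses implicitly through the inverse. The one point you flag as delicate, uniqueness of the $x$-chain decomposition, is indeed the crux, but it follows immediately from the ``exactly one successor'' observation together with the fact that the chain must terminate at a $y$ because all leaves are $y$-decorated; so the argument is complete.
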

\begin{proof}
 By definition of the operation $\beta$, if $F\in\calf_{\{x,y\}}$ is in the image of $\beta$, then $F$ is semiconvergent. Therefore $\fraks^T\left(\calf_{\N^*}\right) \subseteq \calf_{\{x,y\}}^{\rm semi} \subseteq \calf_{\{x,y\}}$.
 Thus we only need to prove that the image of a convergent forest has its roots decorated by $x$s only.
 
 Let $T\in\calf_{\N^*}^{\rm conv}$ be a convergent tree. If $T=\emptyset$ then $\fraks^T(\emptyset)=\emptyset\in\calf_{\{x,y\}}^{\rm conv}$ by definition of $\calf_{\{x,y\}}^{\rm conv}$. 
 
 If $T\neq \emptyset$ then it exists a forest $F$ such that $T=B_+^p(F)$ with $p\geq2$. Then by definition of $\fraks^T$ we have
 \begin{equation*}
  \fraks^T(T) = \left(B_+^x\right)^{p-1}\left(B_+^y(\fraks^T(F))\right)
 \end{equation*}
 which lies in $\calf_{\{x,y\}}^{\rm conv}$ since $p-1\geq1$.
 
 Let $F\in\calf_{\N^*}^{\rm conv}$ be a convergent forest. Then we have $F=T_1\cdots T_k$ with $T_i\in\calf_{\N^*}^{\rm conv}$ by definition 
 of $\calf_{\N^*}^{\rm conv}$. Then 
 \begin{equation*}
  \fraks^T(F) = \fraks^T(T_1)\cdots\fraks^T(T_1)\in\calf_{\{x,y\}}^{\rm conv}
 \end{equation*}
 by definition of $\calf_{\{x,y\}}^{\rm conv}$. Therefore $\fraks^T\left(\calf_{\N^*}^{\rm conv}\right) \subseteq \calf_{\{x,y\}}^{\rm conv}$.
 
 The bijectivity of $\fraks^T$ is also shown by induction, using $|\fraks^T(F)| = ||F||$. The same argument on $(\fraks^T)^{-1}$ allows to show that 
 $(\fraks^T)^{-1}\left(\calf_{\{x,y\}}^{\rm conv}\right) \subseteq \calf_{\N^*}^{\rm conv}$; concluding the proof.
\end{proof}

Recall that a branching vertex is a vertex that has strictly more than one direct successor. This concept will be 
of importance when relating shuffle and stuffle arborified zeta values through the branched binarisation map.

Furthermore, in order to lighten the notations, we will write 
\begin{equation*}
 B_+^{\omega_1\cdots\omega_k} := B_+^{\omega_1}\circ\cdots\circ B_+^{\omega_k}.
\end{equation*}
\begin{thm} \label{thm:relation_shuffle_stuffle}
 For any convergent forest $F\in\calf_{\N^*}^{\rm conv}$ we have
 \begin{equation*}
  \zeta^T_{\shuffle}(\fraks^T(F)) \leq \zeta^T_{\stuffle}(F).
 \end{equation*}
 Furthermore, the inequality is an equality if, and only if, $F$ has no branching vertex (i.e. $F$ is the empty tree or $F=l_1\cdots l_k$ with 
 $l_i$ being ladder trees).
\end{thm}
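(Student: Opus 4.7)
My plan is to give explicit iterated-series representations for both sides of the inequality and then to compare the resulting summation domains. Concretely, I will show
\[
\zeta^T_\stuffle(F) = \sum_{(n_v) \in A(F)} \prod_{v \in V(F)} n_v^{-s_v}, \qquad
\zeta^T_\shuffle(\fraks^T(F)) = \sum_{(n_v) \in B(F)} \prod_{v \in V(F)} n_v^{-s_v},
\]
where $s_v$ denotes the decoration at $v$,
\[
A(F) := \{(n_v) \in (\N^*)^{V(F)} : n_{v'} > n_v \text{ whenever } v' \text{ is the parent of } v\},
\]
and $B(F) \subseteq A(F)$ is the more restrictive set defined by demanding, at every vertex $v' \in V(F)$, the inequality $n_{v'} > \sum_{c \in \mathrm{children}(v')} n_c$ (which is automatic at leaves). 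Once these are established, the theorem reduces to the elementary inclusion $B(F) \subseteq A(F)$ together with the positivity of the summand.

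The first formula is a direct unfolding of $\calz_{-1} = \widehat{\frakS}_{-1} \circ \calr^\sharp$: an induction on $|V(F)|$ using $\frakS_{-1}(\sigma)(N) = \sum_{n=1}^{N-1}\sigma(n)$ and the multiplicativity of $\widehat{\frakS}_{-1}$ on forest concatenations gives partial sums that converge as $N\to\infty$ by Definition-Proposition \ref{defnprop:arborified_zeta}. The second formula is the main technical step. For trees I would argue by induction on $|V(T)|$, working with power series in $z$ for $Li^T_{\fraks^T(T)}(z)$. The recursive identity $\fraks^T(B_+^s(T_1\cdots T_k)) = (B_+^x)^{s-1}\,B_+^y\bigl(\fraks^T(T_1)\cdots\fraks^T(T_k)\bigr)$ converts the induction into nested integrations: the induction hypothesis yields $\prod_i Li^T_{\fraks^T(T_i)}(z) = \sum z^{\sum_i n_{r_i}}/\prod_v n_v^{s_v}$ summed over $\prod_i B(T_i)$, where $r_i$ is the root of $T_i$. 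Applying $\int_0^z \frac{t^k}{1-t}\,dt = \sum_{l > k} z^l/l$ at the unique $B_+^y$ step introduces the new root index $n_r$ with the desired constraint $n_r > \sum_i n_{r_i}$, and the subsequent $(s-1)$ applications of $\int_0^z \frac{t^k}{t}\,dt = z^k/k$ multiply the coefficient by successive factors of $1/n_r$, giving $n_r^{-s}$ in total and keeping $z^{n_r}$ as the only $z$-dependence. The limit $z \to 1$ is justified by Lemma \ref{lem:lim_z_1_forests}, and extension from trees to forests uses the algebra morphism property of $\zeta^T_\shuffle$ for the concatenation product (Theorem \ref{thm:main_result_shuffle}); note that $B(F) = \prod_i B(T_i)$ and similarly for $A(F)$, so the forest case reduces tree-by-tree.

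With both formulas in hand, since every $n_c \geq 1$, the constraint $n_{v'} > \sum_c n_c$ implies $n_{v'} > n_c$ for each child, hence $B(F) \subseteq A(F)$ and positivity of the summand yields $\zeta^T_\shuffle(\fraks^T(F)) \leq \zeta^T_\stuffle(F)$. For the equality characterization: if $F$ has no branching vertex, every non-leaf vertex has a unique child, so $\sum_c n_c = n_c$ and $B(F) = A(F)$, producing equality. Conversely, when $F$ contains a branching vertex $v^*$ with $m \geq 2$ children, the tuple $n_v := h(v) + 1$, where $h(v)$ is the height of the subtree rooted at $v$, lies in $A(F) \setminus B(F)$: the strict decrease of $h$ along parent-child edges provides the $A(F)$ constraints, while $n_{v^*} = \max_c h(c) + 2 \leq \sum_c h(c) + m = \sum_c n_c$ (since $m \geq 2$) violates the $B(F)$ constraint at $v^*$. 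Positivity of the summand then forces strict inequality. The main obstacle is the inductive series expansion for $Li^T_{\fraks^T(T)}$: one must verify carefully that the product of the subtree series combines under a single $B_+^y$ integration into a constraint involving the sum of sibling root indices, and that the subsequent $(s-1)$-fold $B_+^x$ integrations preserve the property that only the single root index $n_r$ appears in the power of $z$, so that the induction can be iterated up the tree.
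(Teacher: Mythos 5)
Your proposal is correct and follows essentially the same route as the paper: both proofs rest on the observation that expanding $1/(1-t)$ under the single $B_+^y$-integration at a vertex of $\fraks^T(T)$ produces the constraint $n_{v'}>\sum_{c}n_c$ over the children, whereas the stuffle side only imposes $n_{v'}>n_c$ for each child separately, so the shuffle sum runs over a smaller domain of positive terms. If anything your write-up is more complete than the paper's, which only works out the one-branching-vertex case explicitly and asserts the general case; your inductive description of the two summation domains $A(F)\supseteq B(F)$ and the explicit witness $n_v=h(v)+1$ for strictness make the argument fully rigorous (modulo the routine justification of interchanging series and integrals, which the paper also takes for granted).
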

\begin{proof}
 \begin{itemize}
  \item If $F=\emptyset$, then $\fraks^T(F)=\emptyset$ and the result holds by construction.
  \item If $F=l_1\cdots l_k$ with $l_i$ being ladder trees, then the result follows from the classical property \eqref{eq:shuffle_stuffle_words} of the binarisation map together with the 
  facts that $\zeta^T_\shuffle$ and $\zeta^T_\stuffle$ are algebra morphisms for the concatenation product of trees, $\zeta_\stuffle(w) =  \zeta^T_\stuffle(\iota_{\N^*}(w))$ and 
  $\zeta_\shuffle(w) =  \zeta^T_\shuffle(\iota_{\{x,y\}}(w))$.
  \item We will work out explicitly that for a tree $T$ with exactly one branching vertex, one has
  \begin{equation*}
   \zeta^T_{\shuffle}(\fraks^T(T)) < \zeta^T_{\stuffle}(T).
  \end{equation*}   
  Let $T$ be a tree with $l$ leaves but only one branching vertex. We have
  \begin{equation*}
   T = B_+^{p_1\cdots p_{k}}\left(B_+^{p^1_1\cdots p^1_{k_1}}(\emptyset)\cdots B_+^{p^l_1\cdots p^l_{k_l}}(\emptyset)\right)
  \end{equation*}
  with $p_1\geq2$. Then 
  \begin{equation*}
   \fraks^T(T) = B_+^{\fraks(p_1\cdots p_{k})}\left(B_+^{\fraks(p^1_1\cdots p^1_{k_1})}(\emptyset)\cdots B_+^{\fraks(p^l_1\cdots p^l_{k_l})}(\emptyset)\right).
  \end{equation*}
  For any $i\in\{1,\cdots,l\}$, let $b_i:=B_+^{\fraks(p^i_1\cdots p^i_{k_i})}(\emptyset)$. The $b_i$s are ladder trees in $\calf_{\{x,y\}}$. Then, by definition 
  we have
  \begin{equation*}
   \zeta_\shuffle^T(\fraks^T(T)) = \int_{1\geq t_1\geq \cdots \geq t_{|p_1|+\dots+|p_k|-1}\geq 0}\prod_{i=1}^k\left(\prod_{j=1}^{p_i-1}\frac{dt_{p_1+\cdots+p_{i-1}+j}}{t_{p_1+\cdots+p_{i-1}+j}}\right)\frac{dt_{p_1+\cdots+p_{i}}}{1-t_{p_1+\cdots+p_{i}}}\prod_{r=1}^l\int_0^{t_{|p_1|+\dots+|p_k|-1}} Li^T_{b_r}(z_r)dz_r. 
  \end{equation*}
  Using the standard trick of expanding in series
  \begin{equation*}
   \frac{1}{1-t} = \sum_{n=0}^\infty t^n
  \end{equation*}
  and exchanging series and integral, we can write the last integrations as series:
  \begin{equation*}
   \int_0^{t_{|p_1|+\dots+|p_k|-1}} Li^T_{b_r}(z_r)dz_r = \sum_{n_1>\cdots n_{k_r}>0}^\infty \frac{(t_{|p_1|+\dots+|p_k|-1})^{n_1}}{(n_1)^{p_1^r} \cdots (n_{k_r})^{p_{k_r}^r}}.
  \end{equation*}
  Thus we have 
  \begin{equation*}
   \prod_{r=1}^l\left(\int_0^{t_{|p_1|+\dots+|p_k|-1}} Li^T_{b_r}(z_r)dz_r\right) = \sum_{\substack{n_1^1>\cdots n_{k_1}^1>0 \\ \cdots \\ n_1^l>\cdots n_{k_l}^l>0}}^\infty \frac{(t_{|p_1|+\dots+|p_k|-1})^{\sum_{r=1}^l n_1^r}}{(n_1^1)^{p_1^1} \cdots (n_{k_1}^1)^{p_{k_1}^1} \cdots (n_1^1)^{p_1^1} \cdots (n_{k_l}^l)^{p_{k_l}^l}}.
  \end{equation*}
  Using the series expansion and exchanging series and integrals allow to perform the next $p_k$ integrations. They produce a new series, 
  for a parameter 
  $n>\sum_{r=1}^l n_1^r$. This series is associated to the branching vertex of $T\in\calf_{\N^*}^{\rm conv}$.
  
  On the other hand, the series associated to the branching vertex of $T\in\calf_{\N^*}^{\rm conv}$ in $\zeta^T_\stuffle(T)$ is over a parameter 
  $n>\min\{n_1^1,\cdots,n_1^l\}$. We therefore obtain that $\zeta^T_\stuffle(T) - \zeta^T_\shuffle(\fraks^T(T))$ is a convergent series of positive terms, therefore 
  a positive number.
  
  This shows that a branching vertex in a tree $T$ induces a loss when writing $\zeta^T_\shuffle(\fraks^T(T))$ in terms of series. The same argument 
  holds for the more general case of a tree with an arbitrary number of branching vertices. We therefore obtain 
  $\zeta^T_{\shuffle}(\fraks^T(T)) < \zeta^T_{\stuffle}(T)$ for any non-ladder tree $T$. 
  \item The result for a forest follows from the previous point and the fact that $\zeta^T_\shuffle$ and $\zeta^T_\stuffle$ are algebra morphisms for the 
  concatenation of trees (Proposition \ref{prop:stuffle_alg_mor} and Theorem \ref{thm:main_result_shuffle}).
 \end{itemize}

\end{proof}

\subsection{Hoffman's relations for branched zetas}

In order to find the branched equivalent of Hoffman's regularisation relations, it is useful to recall that the concatenation product of trees can be seen as lift to trees of both the shuffle and stuffle products. 
The latter only differ in the way one goes back to words, that is to say by a choice of the 
flattening map. Therefore the most naive candidate for this relations is that $\fraks^T(\tdun{1} F) - \fraks^T(\tdun{1})\fraks^T(F)$ is convergent 
for any convergent forest $F$ and lies in the kernel of $\zeta_\shuffle^T$.

However, this statement is trivially true and does not impose any new relation on the algebra spanned by branched zetas. Indeed, since $\fraks^T$ is an 
algebra morphism we have $\fraks^T(\tdun{1} F) - \fraks^T(\tdun{1})\fraks^T(F)=0$\footnote{notice that $0$ and $\emptyset$ are distinct elements of our algebra}. 
This observation is a consequence of the fact that the distinction 
between the stuffle and shuffle products can only be made at the level of words, reached through flattening maps. Consequently, a more relevant quantity 
to study is 
\begin{equation*}
 \fraks\left(fl_1\left(\tdun{1} F\right)\right) - fl_0\left(\fraks^T\left(\tdun{1} F\right)\right)
\end{equation*}
for any convergent forest $F$.
\begin{prop} \label{prop:branched_Hoffman}
 For any convergent forest $F\in\calf_{\N^*}^{\rm conv}$, 
 \begin{equation*}
  \fraks\left(fl_1\left(\tdun{1} F\right)\right) - fl_0\left(\fraks^T\left(\tdun{1} F\right)\right)
 \end{equation*}
 lies in the algebra of convergent words if, and only if, $F$ is empty or $F=l_1\cdots l_k$ with the $l_i$ ladder trees. In this case 
 \begin{equation*}
  \fraks\left(fl_1\left(\tdun{1} l_1\cdots l_k\right)\right) - fl_0\left(\fraks^T\left(\tdun{1} l_1\cdots l_k\right)\right) \in \Ker(\zeta_\shuffle).
 \end{equation*}
\end{prop}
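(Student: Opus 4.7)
The plan is to isolate the non-convergent contributions on each side, show they cancel exactly under the stated hypothesis, and then invoke the classical Hoffman regularisation for the convergent remainder. Using that $fl_1$ is an algebra morphism for $\stuffle = \shuffle_1$ and that $\fraks^T$ is an algebra morphism for concatenation of trees, the two terms rewrite as
\begin{equation*}
\fraks\bigl(fl_1(\tdun{1} F)\bigr) = \fraks\bigl((1)\stuffle fl_1(F)\bigr), \qquad fl_0\bigl(\fraks^T(\tdun{1} F)\bigr) = (y)\shuffle fl_0(\fraks^T(F)),
\end{equation*}
where, by Lemmas \ref{lem:flattening_conv} and \ref{lem:flattening_conv_xy}, both $fl_1(F)$ and $fl_0(\fraks^T(F))$ are linear combinations of convergent words. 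Inspecting the recursive definition of $\shuffle_\lambda$ one sees that the only summand of $(1)\stuffle fl_1(F)$ whose leading letter is $1$ is $(1)\sqcup fl_1(F)$: all other summands either prepend a letter $\geq 2$ or stuff $1$ into such a letter. Applying $\fraks$ converts this leading $1$ into a leading $y$, giving $(y)\sqcup \fraks(fl_1(F))$. By an analogous inspection, the only non-convergent summand of $(y)\shuffle fl_0(\fraks^T(F))$ is $(y)\sqcup fl_0(\fraks^T(F))$. Hence the non-convergent part of the difference equals
\begin{equation*}
(y)\sqcup \bigl[\fraks(fl_1(F)) - fl_0(\fraks^T(F))\bigr].
\end{equation*}

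Step 2 would be to establish, by induction on $|F|$, that this bracket vanishes precisely when $F$ has no branching vertex. For a ladder tree $l = \iota_{\N^*}(w)$ the defining relations of $fl_\lambda$ and of $\fraks^T$ via the universal property give $fl_1(l) = w$ and $fl_0(\fraks^T(l)) = \fraks(w)$, so the bracket is zero. The induction step tracks a grafting $B_+^a$ over a single subtree through the matching relations $fl_\lambda(B_+^\omega(G)) = (\omega)\sqcup fl_\lambda(G)$ and $\fraks^T(B_+^n(G)) = (B_+^x)^{n-1}B_+^y(\fraks^T(G))$. When a vertex has $r\geq 2$ direct successors, the left-hand side $\fraks\circ fl_1$ produces a stuffle of sub-words before binarisation, whereas the right-hand side $fl_0\circ\fraks^T$ produces a shuffle of binarised sub-words; identifying a surviving non-cancelling word in this discrepancy yields the converse.

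For the kernel statement in the case $F = l_1\cdots l_k$ with $l_i = \iota_{\N^*}(w_i)$, set $W := w_1\stuffle\cdots\stuffle w_k$, which is a convergent combination of $\N^*$-words by Lemma \ref{lem:subalgebra_shuffle_lambda}. Step 2 identifies the original difference with the word-level expression $\fraks((1)\stuffle W) - (y)\shuffle \fraks(W)$. Applying Hoffman's classical regularisation relation \eqref{eq:Hoffman_reg_rel} linearly to the convergent combination $W$ then places this in $\Ker(\zeta_\shuffle)$, as required.

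The main obstacle is the combinatorial identification in Step 2. The required identity $\fraks\circ fl_1 = fl_0\circ\fraks^T$ on non-branching forests amounts to showing that the binarisation map intertwines the $\stuffle$-based flattening with the $\shuffle$-based flattening of the binarised tree; this is a delicate compatibility that must be verified recursively by comparing how $\stuffle$ and $\shuffle$ recombine letters across the $y$-separators introduced by $\fraks^T$, and that breaks down in a traceable way exactly at the first branching vertex. This inductive comparison is the technical heart of the proof.
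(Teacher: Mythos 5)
Your Step~1 is correct and in fact sharper than the paper's own treatment: since $fl_1(F)$ is a combination of convergent $\N^*$-words and $fl_0(\fraks^T(F))$ a combination of convergent $\{x,y\}$-words, the only non-convergent summand of $(1)\stuffle fl_1(F)$ is $(1)\sqcup fl_1(F)$ and the only non-convergent summand of $(y)\shuffle fl_0(\fraks^T(F))$ is $(y)\sqcup fl_0(\fraks^T(F))$, so the difference is convergent if and only if the bracket $\fraks(fl_1(F)) - fl_0(\fraks^T(F))$ vanishes. The paper argues differently: for the ``only if'' direction it exhibits the contracted letter $p_2+p_3$ that $fl_1$ produces at a branching vertex and that $fl_0$ cannot produce, and for the ladder case it invokes Hoffman's relation \eqref{eq:Hoffman_reg_rel}.

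There are, however, two problems. First, your Step~2 --- the claim that the bracket vanishes exactly when $F$ has no branching vertex --- carries the entire content of the proposition after your reduction, and you explicitly leave it unproven (``the technical heart of the proof''); as it stands the proposal is an outline, not a proof. Second, and more seriously, the identity you need, $\fraks(fl_1(F)) = fl_0(\fraks^T(F))$ for branching-free $F$, fails as soon as $F$ is a product of $k\geq 2$ ladders, because $\fraks$ is \emph{not} a morphism from $(\calw_{\N^*},\stuffle)$ to $(\calw_{\{x,y\}},\shuffle)$ at the level of words (only the identity \eqref{eq:shuffle_stuffle_words} of \emph{values} holds). Concretely, for $F=\tdun{2}\tdun{2}$ one has $\fraks(fl_1(F)) = \fraks\big(2(2,2)+(4)\big) = 2(xyxy)+(xxxy)$, while $fl_0(\fraks^T(F)) = (xy)\shuffle(xy) = 2(xyxy)+4(xxyy)$; by your own (correct) Step~1 the non-convergent part of the difference is then $(y)\sqcup\big[(xxxy)-4(xxyy)\big]\neq 0$, contradicting the statement you are trying to prove. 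Note that the paper's proof of the ladder-product case makes the same conflation, applying Hoffman's relation to $w_1\stuffle\cdots\stuffle w_k$ although the second term is actually $(y)\shuffle\fraks(w_1)\shuffle\cdots\shuffle\fraks(w_k)$ and not $(y)\shuffle\fraks(w_1\stuffle\cdots\stuffle w_k)$. So the argument is only sound for $F$ empty or a single ladder tree; for $k\geq 2$ the statement needs to be corrected (the discrepancy is the non-convergent term $(y)\sqcup\big[\fraks(w_1\stuffle\cdots\stuffle w_k)-\fraks(w_1)\shuffle\cdots\shuffle\fraks(w_k)\big]$, whose $\zeta_\shuffle$-image would vanish if it were convergent, but which is not zero as a combination of words).
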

\begin{proof}
 \begin{itemize}
  \item If $F$ is empty, then $\fraks\left(fl_1\left(\tdun{1} F\right)\right) - fl_0\left(\fraks^T\left(\tdun{1} F\right)\right)=0$ and the 
  result trivially holds.
  \item If $F=l_1\cdots l_k$ with the $l_i$ ladder trees then, by definition of the flattening maps we have
  \begin{equation*}
  \fraks\left(fl_1\left(\tdun{1} F\right)\right) - fl_0\left(\fraks^T\left(\tdun{1} F\right)\right) = \fraks\left((y)\stuffle w_1\stuffle\cdots\stuffle w_k\right) - (y)\shuffle\fraks(w_1)\shuffle\cdots\shuffle\fraks(w_k)
 \end{equation*}
 with $w_i:=\iota_{\N^*}^{-1}(l_i)$. Then the result holds by Hoffman's regularisation relations \eqref{eq:Hoffman_reg_rel}.
 \item Let $T$ be a non ladder tree, and $v$ a branching vertex of $T$ with decoration $p_1$. Let $v'$ and $v''$ be two direct successors of $V$ with decorations $p_2$ and $p_3$ respectively. Then it exists two 
 eventually empty words $w$ and $w'$ such that
 \begin{equation*}
  fl_1(\tdun{1} T) = (1)\sqcup w\sqcup(p_1[p_2+p_3])\sqcup w' + X
 \end{equation*}
 with $X$ a finite linear combination of words written in the alphabet $\N^*$.
 
 By definition of $fl_0$, $\fraks\left((1)\sqcup w\sqcup(p_1[p_2+p_3])\sqcup w'\right)$ will not show up in $fl_0\left(\fraks^T\left(\tdun{1} F\right)\right)$. Thus we obtain
 \begin{equation*}
  \fraks\left(fl_1\left(\tdun{1} F\right)\right) - fl_0\left(\fraks^T\left(\tdun{1} F\right)\right) = \fraks\left((1)\sqcup w\sqcup(p_1[p_2+p_3])\sqcup w'\right) + Y
 \end{equation*}
 with $Y$ a finite linear combination of words written in the alphabet $\{x,y\}$ without the divergent tree 
 $\fraks\left((1)\sqcup w\sqcup(p_1[p_2+p_3])\sqcup w'\right)$. This implies the result for any non-ladder tree.
 
 For a forest with at least one branching point, the result follows from the fact that $\fraks^T$ and the flattening maps are algebras morphisms and the previous discussion on non ladder trees.
 \end{itemize}
\end{proof}

This result was derived from the picture that the shuffle and stuffle products of words are lifted to trees to the concatenation of trees. We 
have seen in Section \ref{section:shuffle_tree} that one can instead define shuffle and stuffle products on trees. This leads us to an alternative  possible 
generalisation of Hoffman's relations  \eqref{eq:Hoffman_reg_rel}; namely that

\begin{equation*}
 \tdun{1}\stuffle ~T - (\fraks^T)^{-1}\left(\tdun{y}\shuffle ~\fraks^T(T)\right) \quad \text{and} \quad \fraks^T\left(\tdun{1}\stuffle ~T\right) - \tdun{y}\shuffle ~\fraks^T(T)
\end{equation*}
lie in $\calf_{\N^*}^{\rm conv}$
for any convergent tree $T\in\calf_{\N^*}^{\rm conv}$. And indeed 
\begin{prop} \label{prop:conv_Hoffman_trees}
 For any convergent forest $F\in\calf_{\N^*}{\rm conv}$, one has
 \begin{align*}
  & \tdun{1}\stuffle ~F - (\fraks^T)^{-1}\left(\tdun{y}\shuffle ~\fraks^T(F)\right) \in \calf_{\N^*}^{\rm conv}; \\
  & \fraks^T\left(\tdun{1}\stuffle ~F\right) - \tdun{y}\shuffle ~\fraks^T(F) \in \calf_{\{x,y\}}^{\rm conv}.
 \end{align*}
\end{prop}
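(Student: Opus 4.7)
The two statements are equivalent: by Lemma \ref{lem:bin_map_properties} the map $\fraks^T$ is a $\R$-linear bijection which restricts to a bijection $\calf_{\N^*}^{\rm conv}\to\calf_{\{x,y\}}^{\rm conv}$, so applying $\fraks^T$ (respectively $(\fraks^T)^{-1}$) transforms the first statement into the second (and vice versa). I therefore focus on the first and proceed by a case analysis on the structure of the convergent forest $F$.

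For $F=\emptyset$ both terms equal $\tdun{1}$, so the difference is $0$. For $F=T_1\cdots T_k$ with $k\geq 2$ and each $T_i$ a non-empty convergent tree, the concatenation compatibility in Definition \ref{defn:shuffle_tree} applied both to $\tdun{1}\stuffle F$ and to $\tdun{y}\shuffle\fraks^T(F)$, combined with the fact that $\fraks^T$ (and hence $(\fraks^T)^{-1}$ on its image) is an algebra morphism for concatenation, yields
\begin{equation*}
\tdun{1}\stuffle F - (\fraks^T)^{-1}\bigl(\tdun{y}\shuffle\fraks^T(F)\bigr) = \frac{1}{k}\sum_{i=1}^k\Bigl[\tdun{1}\stuffle T_i - (\fraks^T)^{-1}\bigl(\tdun{y}\shuffle\fraks^T(T_i)\bigr)\Bigr]\prod_{j\neq i}T_j.
\end{equation*}
Each bracket will lie in $\calf_{\N^*}^{\rm conv}$ by the single-tree case below, and multiplication by the convergent forest $\prod_{j\neq i}T_j$ preserves convergence by definition.

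The remaining case is $F=T=B_+^a(f)$ with $a\geq 2$ and $f$ an arbitrary forest (not assumed convergent). The grafting compatibility of $\stuffle$ gives directly
\begin{equation*}
\tdun{1}\stuffle T = B_+^1(T) + B_+^a(\tdun{1}\stuffle f) + B_+^{1+a}(f).
\end{equation*}
Iterating the grafting compatibility of $\shuffle$ on $\tdun{y}\shuffle (B_+^x)^{a-1}B_+^y(\fraks^T(f))$ by induction on $a$, and handling the innermost step $\tdun{y}\shuffle B_+^y(\fraks^T(f))$ separately since both operands are then grafted by $B_+^y$, I would prove
\begin{equation*}
\tdun{y}\shuffle \fraks^T(T) = B_+^y\bigl(\fraks^T(T)\bigr) + \sum_{j=1}^{a-1}(B_+^x)^j B_+^y\bigl((B_+^x)^{a-1-j}B_+^y(\fraks^T(f))\bigr) + (B_+^x)^{a-1}B_+^y\bigl(\tdun{y}\shuffle \fraks^T(f)\bigr).
\end{equation*}
All summands lie in the image of $\fraks^T$, and their pullbacks under $(\fraks^T)^{-1}$ are $B_+^1(T)$, $B_+^{j+1}(B_+^{a-j}(f))$ for $j=1,\dots,a-1$, and $B_+^a\bigl((\fraks^T)^{-1}(\tdun{y}\shuffle\fraks^T(f))\bigr)$ respectively. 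Subtracting from $\tdun{1}\stuffle T$, the single divergent term $B_+^1(T)$ cancels, leaving a sum of trees whose outermost decoration is $\geq 2$, hence an element of $\calf_{\N^*}^{\rm conv}$.

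The main obstacle is the explicit expansion formula for $\tdun{y}\shuffle\fraks^T(T)$: the induction on $a$ must distinguish three families of terms produced at each step (the new $\tdun{y}$ grafted on top, pushed past an $x$, or absorbed by the innermost $B_+^y$), and each summand must be correctly identified as an $\fraks^T$-image so that $(\fraks^T)^{-1}$ may be applied termwise. A noteworthy feature is that no inductive hypothesis on the interior $f$ is required for the single-tree case, which is crucial since $f$ may itself be divergent (e.g.\ $T=B_+^2(\tdun{1})$); this is precisely the flexibility that fails in the flattened version of Proposition \ref{prop:branched_Hoffman}.
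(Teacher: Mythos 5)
Your proof is correct and follows essentially the same route as the paper: reduce to one of the two statements via the bijectivity of $\fraks^T$ on convergent forests, split off the unique divergent term ($B_+^1(T_i)$ on one side, $B_+^y(\fraks^T(T_i))$ on the other) produced by the recursive definitions of $\stuffle$ and $\shuffle$, and observe that these cancel while every remaining term has root decoration $\geq 2$ (resp.\ root decorated by $x$). The only difference is one of explicitness: where the paper simply posits convergent remainders $X_{i,F}$ and $Y_{i,F}$, you write out the full expansion of $\tdun{y}\shuffle\fraks^T(T)$ and identify each summand as an $\fraks^T$-image, which is a harmless (indeed clarifying) elaboration.
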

\begin{proof}
 First, notice that by Lemma \ref{lem:bin_map_properties} the two statements are equivalent. We therefore only prove the second one.
 
 For a convergent forest $F=T_1\cdots T_k$ , we have $\fraks^T(F) = \fraks^T(T_1)\cdots\fraks^T(T_k)$. Then we have, with obvious 
 notations:
 \begin{equation*}
  \tdun{1}\stuffle~F = \frac{1}{k}\sum_{i=1}^k\left(B_+^1(T_i) + X_{i,F}\right)F\setminus T_i
 \end{equation*}
 for some finite sum of trees $X_{i,F}\in\calf_{\N^*}{\rm conv}$. Therefore we have
 \begin{equation*}
  \fraks^T\left(\tdun{1}\stuffle~F\right) = \frac{1}{k}\sum_{i=1}^k\left(B_+^y(\fraks^T(T_i)) + \fraks^T(X_{i,F})\right)\fraks^T(F\setminus T_i).
 \end{equation*}
 On the other hand we have
 \begin{equation*}
  \tdun{y}\shuffle~\fraks^T(F) = \frac{1}{k}\sum_{i=1}^k\left(B_+^y(\fraks^T(T_i)) + Y_{i,F}\right)\fraks^T(F\setminus T_i)
 \end{equation*}
 for some $Y_{i,F}\in\calf_{\{x,y\}}^{\rm conv}$. Taking the difference of these two quantities, one obtain the result, since by 
 Lemma \ref{lem:bin_map_properties} the branched binarisation map $\fraks^T$ maps convergent forests to convergent forests.
\end{proof}

While this result might give us hope, one should expect Theorem \ref{thm:relation_shuffle_stuffle} to prevent the quantities 
$\fraks^T\left(\tdun{1}\stuffle ~F\right) - \tdun{y}\shuffle ~\fraks^T(F)$ from lying in the kernel of $\zeta^T_\shuffle$. And indeed, one 
finds, after a long yet straightforward computation, that
\begin{equation*}
 \zeta_\shuffle^T\left(\fraks^T\left(\tdun{1}\stuffle\tdtroisun{2}{1}{1}\right) - \tdun{$y$}\shuffle\tdquatrequatre{$x$}{$y$}{$y$}{$y$}\right) = \zeta(2,3)+\zeta(3,2) > 0.
\end{equation*}
The precise characterisation of quantities of the form $\fraks^T\left(\tdun{1}\stuffle ~F\right) - \tdun{y}\shuffle ~\fraks^T(F)$ are 
left for future investigations. For now, let us write that the conclusion of this Section is that branching vertices, at least with 
the current definitions of branched zeta values, induce an important change when lifting the properties of  {multiple zeta} values to arborified zeta values. \\

{\bf Acknowledgments:} The author thanks Dominique Manchon for initial discussions that kick-started this project and further fruitful 
interactions. I also thank Lo\"ic Foissy and Li Guo for helpful  
suggestions regarding the literature and Sylvie Paycha for support, encouragements and advice. Finally, many thanks Dominique Manchon, Sylvie Paycha and Li Guo for comments on preliminary versions of this paper.

\end{document}